\DeclareMathOperator{\Ext}{\mathrm{Ext}}
\DeclareMathOperator{\Hom}{\mathrm{Hom}}
\DeclareMathOperator{\DD}{\mathbb{D}}
\DeclareMathOperator{\AAA}{\mathbb{A}}
\DeclareMathOperator{\cD}{\mathcal{D}}
\DeclareMathOperator{\bc}{\textbf{c}}
\theoremstyle{plain}
\theoremstyle{definition}
\newtheorem{theorem}{Theorem}[section]
\newtheorem{remark}[theorem]{Remark}
\newtheorem{lemma}[theorem]{Lemma}
\newtheorem{notation}[theorem]{Notation}
\newtheorem{problem}[theorem]{Problem}
\newtheorem{example}[theorem]{Example}
\newtheorem{corollary}[theorem]{Corollary}
\DeclareMathAlphabet{\mathpzc}{OT1}{pzc}{m}{it}
\title{A combinatorial model for exceptional sequences in type A}
\author{Alexander Garver}
\address{School of Mathematics,
University of Minnesota, Minneapolis, MN 55455, USA}
\email{garv0102@math.umn.edu}
\author{Jacob P. Matherne}
\address{Department of Mathematics,
Louisiana State University, Baton Rouge, LA 70808, USA}
\email{jmath34@tigers.lsu.edu}
\thanks{The first author was supported by a Research Training Group, RTG grant DMS-1148634.}
\thanks{The second author was supported by a Graduate Assistance in Areas of National Need fellowship, GAANN grant P200A120001.}
\begin{document}

\begin{abstract}
 Exceptional sequences are certain ordered sequences of quiver representations. We use noncrossing edge-labeled trees in a disk with boundary vertices (expanding on T. Araya's work) to classify exceptional sequences of representations of Q, the linearly-ordered quiver with n vertices. We also show how to use variations of this model to classify {\bf c}-matrices of Q, to interpret exceptional sequences as linear extensions, and to give a simple bijection between exceptional sequences and certain chains in the lattice of noncrossing partitions. In the case of {\bf c}-matrices, we also give an interpretation of {\bf c}-matrix mutation in terms of our noncrossing trees with directed edges.
 \end{abstract}

\maketitle
\tableofcontents

\section{Introduction}
\label{sec:in}

Exceptional sequences are certain ordered sequences of quiver representations introduced in \cite{gr87} to study exceptional vector bundles on $\mathbb{P}^2$. Maximal such sequences called complete exceptional sequences also have connections with combinatorics as they are in bijection with maximal chains in the lattice of noncrossing partitions by the work of \cite{it09} and \cite{hk13}. More recently, complete exceptional sequences were shown to be intrinsically related to acyclic cluster algebras with principal coefficients via the work of Speyer and Thomas \cite{st13}.  They appear as certain orderings of the \textbf{c}-vectors of a $\bc$-matrix in such a cluster algebra.

We take a combinatorial approach to the study of exceptional sequences.  In \cite{a13}, Araya establishes a bijection between the set of complete exceptional collections in type $\AAA$ and the collection of certain {noncrossing spanning trees called chord diagrams}.  It is this simple combinatorial model that serves as the vehicle to our results.

In Section \ref{sec:bijection}, we decorate Araya's diagrams with edge-labelings and oriented edges so that they can keep track of both the ordering of the representations in a complete exceptional sequence as well as the signs of the rows in the $\bc$-matrix it came from.  While Araya's diagrams classify complete exceptional collections, we show that the new decorated diagrams classify more complicated objects called exceptional sequences (Theorem \ref{firstmainresult}). In this language, we give a rule for mutation of oriented diagrams arising from $\bc$-matrices in Section \ref{sec:mut}. We remark that our work is connected to that of Goulden and Yong \cite{gy02} who used edge-labeled noncrossing spanning trees in a disk to study factorizations of the long cycle $(1,2,\ldots, n+1) \in \mathfrak{S}_{n+1}$ by a {specified} collection of transpositions.

The work of Speyer and Thomas (see \cite{st13}) allows complete exceptional sequences to be obtained from $\bc$-matrices.  In \cite{onawfr13}, the number of complete exceptional sequences in type $\AAA_n$ is given, and there are more of these than there are $\bc$-matrices.  Thus, it is natural to ask exactly which $\bc$-matrices appear as Araya's diagrams.  We give an answer to this question in Section \ref{sec:cmats} 
(Theorem \ref{reachcec}).

In Section \ref{sec:pos}, we ask how many complete exceptional sequences can be formed using the representations in a complete exceptional collection. It turns out that two complete exceptional sequences can be formed in this way if they have the same underlying chord diagram without chord labels. We interpret this number as the number of linear extensions of the poset determined by the chord diagram of the complete exceptional collection. This also gives an interpretation of complete exceptional sequences as linear extensions.

With this interpretation in mind, we investigate the question of which permutations of the $\bc$-vectors of a $\bc$-matrix lead to CESs by regarding the absolute value of the $\bc$-vectors as the dimension vectors of indecomposable representations.  In Section \ref{sec: perm}, we give an explicit combinatorial description of these permutations.

In Section \ref{sec:app}, we give several applications of the theory in type $\AAA$, including combinatorial proofs that two reddening sequences produce isomorphic ice quivers (see \cite{k12} for a general proof in all types using deep category-theoretic techniques) and that there is a bijection between exceptional sequences and certain chains in the lattice of noncrossing partitions.

{\bf Acknowledgements.~}
Helpful insight was given during conversations with E. Barnard, J. Geiger, M. Kulkarni, G. Muller, G. Musiker, D. Rupel, D. Speyer, and G. Todorov.  We thank G. Muller and G. Musiker for their helpful comments on drafts of our paper and S. A. Csar whose PhD thesis defense inspired us to see the connections between exceptional sequences and linear extensions.  We also thank the 2014 Mathematics Research Communities program for giving us an opportunity to work on this exciting problem as well as for giving us a stimulating (and beautiful) place to work.

\section{Exceptional sequences}
\label{sec:prelim}

We begin by defining quivers and exchange matrices, which serve as the starting point in our study of exceptional sequences.

\subsection{Quiver mutation}\label{subsec:quivers}
A \textbf{quiver} $Q$ is a directed graph without loops or 2-cycles. In other words, $Q$ is a 4-tuple $(Q_0,Q_1,s,t)$, where $Q_0 = [m] := \{1,2, \ldots, m\}$ is a set of \textbf{vertices}, $Q_1$ is a set of \textbf{arrows}, and two functions $s, t:Q_1 \to Q_0$ defined so that for every $\alpha \in Q_1$, we have $s(\alpha) \xrightarrow{\alpha} t(\alpha)$. An \textbf{ice quiver} is a pair $(Q,F)$ with $Q$ a quiver and $F \subset Q_0$ \textbf{frozen vertices} with the additional restriction that any $i,j \in F$ have no arrows of $Q$ connecting them. We refer to the elements of $Q_0\backslash F$ as \textbf{mutable vertices}. By convention, we assume $Q_0\backslash F = [n]$ and $F = [n+1,m] := \{n+1, n+2, \ldots, m\}.$ Any quiver $Q$ can be regarded as an ice quiver by setting $Q = (Q, \emptyset)$.

The {\bf mutation} of an ice quiver $(Q,F)$ at mutable vertex $k$, denoted $\mu_k$, produces a new ice quiver $(\mu_kQ,F)$ by the three step process:

(1) For every $2$-path $i \to k \to j$ in $Q$, adjoin a new arrow $i \to j$.

(2) Reverse the direction of all arrows incident to $k$ in $Q$.

(3) Delete any $2$-cycles created during the first two steps.

\noindent We show an example of mutation below depicting the mutable (resp. frozen) vertices in black (resp. blue).
\[
\begin{array}{c c c c c c c c c}
\raisebox{.35in}{$(Q,F)$} & \raisebox{.35in}{=} & {\includegraphics[scale = .7]{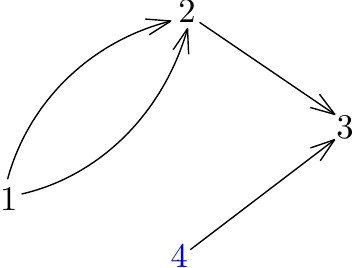}} & \raisebox{.35in}{$\stackrel{\mu_2}{\longmapsto}$} & {\includegraphics[scale = .7]{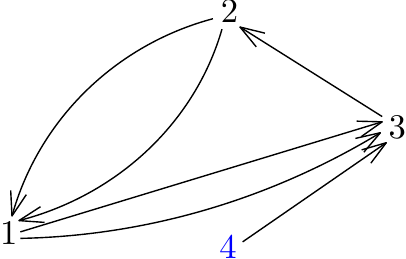}} & \raisebox{.35in}{=} & \raisebox{.35in}{$(\mu_2Q,F)$}
\end{array}
\]

The information of an ice quiver can be equivalently described by its (skew-symmetric) \textbf{exchange matrix}. Given $(Q,F),$ we define $B = B_{(Q,F)} = (b_{ij}) \in \mathbb{Z}^{n\times m} := \{n \times m \text{ integer matrices}\}$ by $b_{ij} := \#\{i \stackrel{\alpha}{\to} j \in Q_1\} - \#\{j \stackrel{\alpha}{\to} i \in Q_1\}.$ Furthermore, ice quiver mutation can equivalently be defined  as \textbf{matrix mutation} of the corresponding exchange matrix. Given an exchange matrix $B \in \mathbb{Z}^{n\times m}$, the \textbf{mutation} of $B$ at $k \in [n]$, also denoted $\mu_k$, produces a new exchange matrix $\mu_k(B) = (b^\prime_{ij})$ with entries
\[
b^\prime_{ij} := \left\{\begin{array}{ccl}
-b_{ij} & : & \text{if $i=k$ or $j=k$} \\
b_{ij} + \frac{|b_{ik}|b_{kj}+ b_{ik}|b_{kj}|}{2} & : & \text{otherwise.}
\end{array}\right.
\]
\begin{flushleft}For example, the mutation of the ice quiver above (here $m=4$ and $n=3$) translates into the following matrix mutation. Note that mutation of matrices {(or of ice quivers)} is an involution (i.e. $\mu_k\mu_k(B) = B$).\end{flushleft}
\[
\begin{array}{c c c c c c c c c c}
B_{(Q,F)} & = & \left[\begin{array}{c c c | r}
0 & 2 & 0 & 0 \\
-2 & 0 & 1 & 0\\
0 & -1 & 0 & -1\\
\end{array}\right]
& \stackrel{\mu_2}{\longmapsto} &
\left[\begin{array}{c c c | r}
0 & -2 & 2 & 0 \\
2 & 0 & -1 & 0\\
-2 & 1 & 0 & -1\\
\end{array}\right] 
& = & B_{(\mu_2Q,F)}.
\end{array}
\]

{Given a quiver $Q$, we define its \textbf{framed} (resp. \textbf{coframed}) quiver to be the ice quiver $\widehat{Q}$ (resp. $\widecheck{Q}$) where $\widehat{Q}_0\ (= \widecheck{Q}_0) := Q_0 \sqcup [n+1, 2n]$, $F = [n+1, 2n]$, and $\widehat{Q}_1 := Q_1 \sqcup \{i \to n+i: i \in [n]\}$ (resp. $\widecheck{Q}_1 := Q_1 \sqcup \{n+i \to i: i \in [n]\}$).}  Now given $\widehat{Q}$ we define the \textbf{exchange tree} of $\widehat{Q}$, denoted $ET(\widehat{Q})$, to be the (a priori infinite) graph whose vertices are quivers obtained from $\widehat{Q}$ by a finite sequence of mutations and with two vertices connected by an edge if and only if the corresponding quivers are obtained from each other by a single mutation. Similarly, define the \textbf{exchange graph} of $\widehat{Q}$, denoted $EG(\widehat{Q})$, to be the quotient of $ET(\widehat{Q})$ where two vertices are identified if and only if there is a \textbf{frozen isomorphism} of the corresponding quivers (i.e. an isomorphism that fixes the frozen vertices). Such an isomorphism is equivalent to a simultaneous permutation of the rows and columns of the corresponding exchange matrices.

Given $\widehat{Q}$, we define the \textbf{c}-\textbf{matrix} $C(n) = C_R(n)$ (resp. $C = C_R$) of $R \in ET(\widehat{Q})$ (resp. $R \in EG(\widehat{Q})$)  to be the submatrix of $B_R$ where $C(n) := (b_{ij})_{i \in [n], j \in [n+1, 2n]}$ (resp. $C := (b_{ij})_{i \in [n], j \in [n+1,2n]}$). We let \textbf{c}-mat($Q$) $:= \{C_R: R \in EG(\widehat{Q})\}$. By definition, $B_R$ (resp. $C$) is only defined up to simultaneous permutations of its rows and columns (resp. up to permutations of its rows) for any $R \in EG(\widehat{Q})$.

A row vector of a \textbf{c}-matrix, $\overrightarrow{c}$, is known as a \textbf{c}-\textbf{vector}. The celebrated theorem of Derksen, Weyman, and Zelevinsky \cite[Theorem 1.7]{dwz10}, known as the {sign-coherence} of $\textbf{c}$-vectors, states that for any $R \in ET(\widehat{Q})$ and $i \in [n]$ the \textbf{c}-vector $\overrightarrow{c_i}$ is a nonzero element of $\mathbb{Z}_{\ge 0}^n$ or $\mathbb{Z}_{\le0}^n$. Thus we say a \textbf{c}-vector is either \textbf{positive} or \textbf{negative}.

For the purposes of this paper, we will only be concerned with the linearly-ordered $\AAA_n$ quiver $Q$ shown below with its framed quiver $\widehat{Q}$.
\[{Q} = \raisebox{.175in}{\begin{xy} 0;<1pt,0pt>:<0pt,-1pt>:: 
(0,10) *+{1} ="0",
(30,10) *+{2} ="1",
(60,10) *+{\cdots} ="2",
(130,10) *+{n} ="4",
(100,10) *+{n-1} ="5",
"1", {\ar"0"},
"2", {\ar"1"},
"5", {\ar "2"},
"4", {\ar"5"},
\end{xy}}
\ \ \ \ \ \ \ \ \ \widehat{Q} = \begin{xy} 0;<1pt,0pt>:<0pt,-1pt>:: 
(0,10) *+{1} ="0",
(30,10) *+{2} ="1",
(60,10) *+{\cdots} ="2",
(100,10) *+{n-1} ="3",
(130,10) *+{n} ="4",
(0,-20) *+{\textcolor{blue}{\text{$n+1$}}} ="5",
(30,-20) *+{\textcolor{blue}{\text{$n+2$}}} ="6",
(100,-20) *+{\textcolor{blue}{\text{$2n-1$}}} ="7",
(130,-20) *+{\textcolor{blue}{\text{$2n$}}} ="8",
"1", {\ar"0"},
"0", {\ar"5"},
"2", {\ar"1"},
"1", {\ar"6"},
"3", {\ar"2"},
"4", {\ar"3"},
"3", {\ar"7"},
"4", {\ar"8"},
\end{xy}\]

\subsection{Representations of quivers}

A \textbf{representation} $V = ((V_i)_{i \in Q_0}, (\varphi_\alpha)_{\alpha \in Q_1})$ of a quiver $Q$  is an assignment of a $k$-vector space $V_i$ to each vertex $i$ and a $k$-linear map $\varphi_\alpha: V_{s(\alpha)} \rightarrow V_{t(\alpha)}$ to each arrow $\alpha$ where $k$ is a field.  The \textbf{dimension vector} of $V$ is the vector $\underline{\dim}(V):=(\dim V_i)_{i\in Q_0}$.
Here is an example of a representation, with $\underline{\dim}(V) = (3,3,2)$, of the \textbf{mutable part} of the quiver depicted in Section \ref{subsec:quivers}.
\[
{\includegraphics[scale = .8]{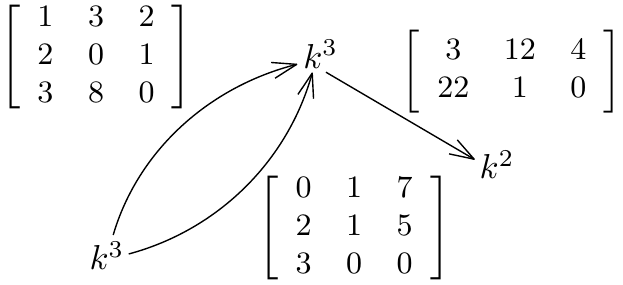}}
\]
Let $V = ((V_i)_{i \in Q_0}, (\varphi_\alpha)_{\alpha \in Q_1})$ and $W  = ((W_i)_{i \in Q_0}, (\varrho_\alpha)_{\alpha \in Q_1})$ be two representations of a quiver $Q$. A \textbf{morphism} $\theta : V \rightarrow W$ consists of a collection of linear maps $\theta_i : V_i \rightarrow W_i$ that are compatible with each of the linear maps in $V$ and $W$.  That is, for each arrow $\alpha \in Q_1$, we have $\theta_{t(\alpha)} \circ \varphi_\alpha = \varrho_\alpha \circ \theta_{s(\alpha)}$.  An \textbf{isomorphism} of quiver representations is a morphism $\theta: V \to W$ where $\theta_i$ is a $k$-vector space isomorphism for all $i \in Q_0$. We define $V \oplus W := ((V_i\oplus W_i)_{i \in Q_0}, (\varphi_\alpha \oplus \varrho_\alpha)_{\alpha \in Q_1})$ to be the \textbf{direct sum} of $V$ and $W$. We say that a nonzero representation $V$ is \textbf{indecomposable} if it is not isomorphic to a direct sum of two nonzero representations. 

For the linearly-ordered $\AAA_n$ quiver, it is a standard exercise
to show that its indecomposable representations up to isomorphism are exactly the representations $X_{i,j}$ with $0 \le i < j \le n$ defined as
\[
\begin{array}{c c c c c c c c c c c c c c c c c c c c c c c}
& & 1 & & & & i & & & & & & j & & & & & & n \\
X_{i,j} & := &0 & \stackrel{0}{\longleftarrow} & \cdots & \stackrel{0}{\longleftarrow} & 0 & \stackrel{0}{\longleftarrow} & k & \stackrel{1}{\longleftarrow} & \cdots & \stackrel{1}{\longleftarrow} & k & \stackrel{0}{\longleftarrow} & 0 & \stackrel{0}{\longleftarrow} & \cdots & \stackrel{0}{\longleftarrow} & 0.
\end{array}
\]

We remark that representations of $Q$ can equivalently be regarded as modules over the \textbf{path algebra} $kQ$. As such, one can define $\Ext_{kQ}^s(V,W)$ ($s \ge 0$) and $\Hom_{kQ}(V,W)$ for any representations $V$ and $W$.  We refer the reader to \cite{ass06} for more details on representations of quivers.

An \textbf{exceptional sequence} $\xi = (V_1,\ldots, V_k)$  ($k \le n:= \#Q_0$) is an ordered \textit{list} of \textbf{exceptional representations} $V_j$ of $Q$ (i.e. $\Ext_{kQ}^s(V_j,V_j) = 0$ for all $s\ge 1$) satisfying $\Hom_{kQ}(V_j,V_i) = 0$ and $\Ext_{kQ}^s(V_j,V_i) = 0$ if $i < j$ for all $s \ge 1$. We define an \textbf{exceptional collection} $\overline{\xi} = \{V_1,\ldots, V_k\}$ to be a \textit{set} of exceptional representations $V_j$ of $Q$ that can be ordered in such a way that they define an exceptional sequence. When $k = n$, we say $\xi$ (resp. $\overline{\xi}$) is a \textbf{complete exceptional sequence} (CES) (resp. \textbf{complete exceptional collection} (CEC)). For the linearly-ordered quiver, a representation is exceptional if and only if it is indecomposable.

The following result of Speyer and Thomas gives a beautiful connection between $\bc$-matrices of an acyclic quiver $Q$ and CESs.  It serves as motivation for our work. Before stating it we remark that for any $R \in ET(\widehat{Q})$ and any $i \in [n]$ where $Q$ is an acyclic quiver, the $\bc$-vector $\overrightarrow{c_i} = \overrightarrow{c_i}(R) = \pm \underline{\dim}(V_i)$ for some indecomposable representation of $Q$ (see \cite{c12}).

\begin{notation}
Let $\overrightarrow{c}$ be a \textbf{c}-vector of an acyclic quiver $Q$. Define 
$$\begin{array}{rcl}
|\overrightarrow{c}| & := & \left\{\begin{array}{rcl} \overrightarrow{c} & : & \text{if $\overrightarrow{c}$ is positive}\\ -\overrightarrow{c} & : & \text{if $\overrightarrow{c}$ is negative.} \end{array}\right.
\end{array}$$
\end{notation}

\begin{theorem}\cite{st13}\label{st}
Let $C \in \textbf{c}$-mat$(Q)$, let $\{\overrightarrow{c_i}\}_{i \in [n]}$ denote the \textbf{c}-vectors of $C$, and let $|\overrightarrow{c_i}| = \underline{\dim}(V_i)$ for some indecomposable representation of $Q$. There exists a permutation $\sigma \in \mathfrak{S}_n$ such that  $(V_{\sigma(1)},...,V_{\sigma(n)})$ is a CES with the property that if there exist positive \textbf{c}-vectors in $C$, then there exists $k \in [n]$ such that $\overrightarrow{c_{\sigma(i)}}$ is positive if and only if $i \in [k,n]$, and $\Hom_{kQ}(V_i,V_j)=0$ if $\overrightarrow{c_i}, \overrightarrow{c_j}$ have the same sign.  Conversely, any set of $n$ vectors having these properties defines a $\bc$-matrix whose rows are $\{\overrightarrow{c_i}\}_{i\in[n]}$.
\end{theorem}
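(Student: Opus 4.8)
The plan is to prove both directions at once by induction on the mutation distance of $R$ from $\widehat{Q}$ inside $EG(\widehat{Q})$, using that every $C \in \bc$-mat$(Q)$ equals $C_R$ for some $R$ reachable from $\widehat{Q}$ by finitely many mutations. Concretely, I would show that the list of properties in the statement holds for the initial seed and is preserved by a single mutation $\mu_k$; the converse then follows by running the same correspondence backwards. Two facts quoted above are used freely: sign-coherence (the theorem of \cite{dwz10}), which lets me speak of the common sign of each $\overrightarrow{c_i}$, and the fact that each $|\overrightarrow{c_i}|$ is the dimension vector of an indecomposable (see \cite{c12}), which is what lets me pass between $\bc$-vectors and representations $V_i$.

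For the base case $R=\widehat{Q}$ the $\bc$-matrix is the identity, so the $\bc$-vectors are $e_1,\dots,e_n = \underline{\dim}\,S_1,\dots,\underline{\dim}\,S_n$ and all are positive; hence the sign condition holds with $k=1$ and, since $\Hom_{kQ}(S_a,S_b)=0$ for $a\neq b$, the same-sign $\Hom$-vanishing is automatic. It remains to choose $\sigma$ making $(S_{\sigma(1)},\dots,S_{\sigma(n)})$ a CES. Because $Q$ is hereditary and $\Ext^1_{kQ}(S_a,S_b)$ is governed by the arrows $a\to b$, taking $\sigma$ to be a topological ordering of $Q_0$ (which exists as $Q$ is acyclic) forces every required $\Ext$- and $\Hom$-group to vanish, so the simples in this order form a CES.

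For the inductive step, suppose $C=C_R$ satisfies the conclusion through a permutation $\sigma$ and a sign-change index, and set $C'=\mu_k(C)$. The sign-coherent $\bc$-vector mutation sends $\overrightarrow{c_k}\mapsto-\overrightarrow{c_k}$ and, for $i\neq k$, adds the nonnegative integer multiple $[\varepsilon b_{ik}]_+\,\overrightarrow{c_k}$ to $\overrightarrow{c_i}$, where $\varepsilon$ is the common sign of $\overrightarrow{c_k}$ and $b_{ik}$ is a principal-part entry of $B_R$. On dimension vectors this addition is realized categorically by the braid-group mutation of exceptional sequences: $V_i$ is replaced by the indecomposable $V_i'$ arising from the minimal $\mathrm{add}(V_k)$-approximation (equivalently from a universal extension), which again leaves the list exceptional. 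The flip of $\overrightarrow{c_k}$ moves $V_k$ across the positive/negative divide, so one re-sorts the list to restore ``negatives first, positives last''; the commutations needed are licensed exactly by the same-sign $\Hom$-vanishing and the exceptional ordering, and one then checks that $C'$ again has same-sign $\Hom$-vanishing. I expect the main obstacle to be precisely this matching: aligning the combinatorial $\bc$-vector mutation with the categorical mutation while simultaneously tracking the signs and re-establishing both the $\Hom$-vanishing and the CES property after re-sorting.

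For the converse, given $n$ signed vectors with $|\overrightarrow{c_i}|=\underline{\dim}\,V_i$ satisfying the three listed properties, I would reverse the construction: such ``admissible'' data are closed under the inverse mutation operation, and any admissible datum can be reduced by a sequence of these moves to the all-positive simples datum of the base case. Reading the mutations forward from $\widehat{Q}$ then produces a seed $R$ whose $\bc$-matrix $C_R$ has exactly the given rows, so the data do define a $\bc$-matrix. Here the delicate point is connectivity and termination: verifying that every admissible datum can indeed be mutated down to the base datum, which is the converse's analogue of the obstacle above.
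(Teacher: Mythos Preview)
The paper does not prove this theorem: it is quoted verbatim from Speyer--Thomas \cite{st13} and used as a black box throughout (Lemma~\ref{cmatdiag}, Lemma~\ref{orientconfigs}, Theorem~\ref{reachcec}). There is therefore no ``paper's own proof'' to compare your proposal against.

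That said, your sketch is in the spirit of how \cite{st13} actually argues, but it is a sketch rather than a proof, and the gaps you flag are exactly where the work lies. In the inductive step, the assertion that the sign-coherent $\bc$-vector mutation is realized on the level of representations by a minimal $\mathrm{add}(V_k)$-approximation is the heart of the matter; one must check that the outcome is again indecomposable, that its dimension vector agrees with the mutated $\bc$-vector (including the sign bookkeeping), and that the resulting sequence is still exceptional after the swap. Likewise, the ``re-sorting'' you invoke to restore the negatives-then-positives pattern is not automatic: you need that $V_k$ commutes past every same-sign $V_j$ you move it across, which requires more than the same-sign $\Hom$-vanishing in the statement (you also need the relevant $\Ext^1$ to vanish), and this is precisely what the braid relations on exceptional sequences provide once set up carefully. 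For the converse, your plan to mutate an arbitrary admissible datum down to the simples is the right idea, but termination is genuinely nontrivial: in \cite{st13} this is handled via a length or rank argument on the associated reflection factorization of the Coxeter element, not by a naive descent on the $\bc$-vectors, and you would need to supply such a potential function. In short, your outline identifies the correct mechanism and the correct difficulties, but none of the three hard steps (categorical realization of $\mu_k$, re-ordering across the sign boundary, and connectivity for the converse) is close to being established by what you have written.
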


\section{Chord diagrams and exceptional sequences}
\label{sec:bijection}
We henceforth fix $n \in \mathbb{N}$ and use $Q$ to denote the linearly-ordered $\mathbb{A}_n$ quiver. 

A \textbf{chord diagram} $d = \{c(i_\ell,j_\ell)\}_{\ell \in [k]}$ with $k \in [n]$ is a graph embedded in a disk with $n+1$ vertices, called \textbf{marked points}, on the boundary of the disk. The marked points are labeled $0,1,...,n$ counterclockwise starting from the top of the {disk}. We let $c(i,j)$ denote the \textbf{chord} connecting the marked points labeled $i$ and $j$. Furthermore, we require that the chords are pairwise non-intersecting in the interior of the disk, that the underlying graph of $d$ is a forest,
 and that each pair of distinct marked points has at most one chord connecting them. We similarly define a \textbf{labeled} (resp. \textbf{oriented}) chord diagram, denoted $d(k) = \{(c(i_\ell, j_\ell), s_\ell)\}_{\ell \in [k]}$ (resp. $\overrightarrow{d} = \{\overrightarrow{c}(i_\ell, j_\ell)\}_{\ell \in [k]}$), to be a chord diagram each of whose chords are labeled by an integer $s_\ell \in [k]$ bijectively (resp. whose chords $\overrightarrow{c}(i_\ell, j_\ell)$ are oriented from $i_\ell$ to $j_\ell$). We give examples of each type of chord diagram of type $\AAA_3$ below.

\[
{\includegraphics[scale = .4]{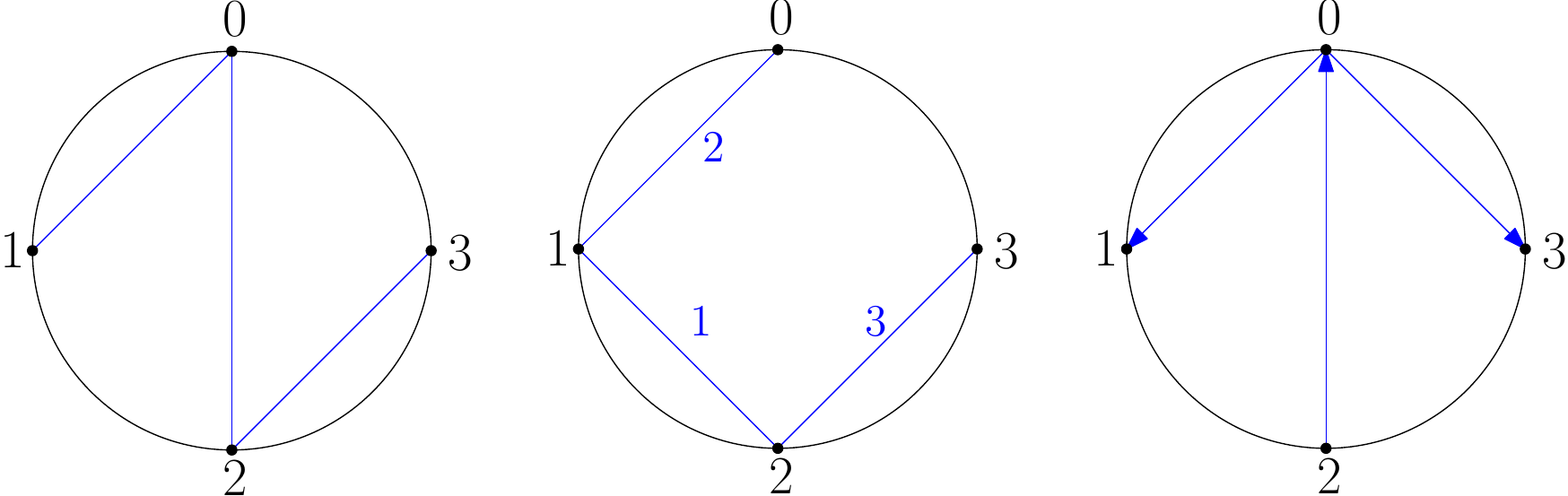}}
\]

\begin{theorem}\cite[Thm. 1.1]{a13}\label{a13}
Let $\overline{\mathcal{E}} := \{\text{complete exceptional collections}\}$, $\mathcal{D} := \{\text{diagrams } d = \{c(i_\ell,j_\ell)\}_{\ell \in [n]}\}$. The map $\Phi: \mathcal{\overline{E}} \to \mathcal{D}$ defined by $\overline{\xi} = \{X_{i_1,j_1},\ldots, X_{i_n,j_n}\} \mapsto \{c(i_\ell,j_\ell)\}_{\ell \in [n]}$ is a bijection.
\end{theorem}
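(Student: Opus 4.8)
The plan is to prove that $\Phi$ is a well-defined bijection by constructing an explicit inverse map and verifying both compositions are identities. The key observation I would exploit is the correspondence between the indecomposable representation $X_{i,j}$ (with $0 \le i < j \le n$) and the chord $c(i,j)$, so that $\Phi$ is essentially a chord-by-chord translation; the real content is showing that the forest/non-crossing conditions on chord diagrams correspond precisely to the exceptionality conditions on collections.

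First I would establish well-definedness: given a complete exceptional collection $\overline{\xi} = \{X_{i_1,j_1},\ldots,X_{i_n,j_n}\}$, I need to check that $\{c(i_\ell,j_\ell)\}_{\ell \in [n]}$ is a genuine diagram, i.e., that the chords are pairwise non-crossing in the interior, that the underlying graph is a forest, and that there are $n$ distinct chords. The crucial translation is a $\Hom$/$\Ext$ dictionary for type $\AAA_n$: for the linearly-ordered quiver one can compute directly that $\Hom_{kQ}(X_{i,j}, X_{k,l})$ and $\Ext^1_{kQ}(X_{i,j}, X_{k,l})$ vanish exactly when the chords $c(i,j)$ and $c(k,l)$ are non-crossing (I would work out the precise combinatorial condition on the pairs $(i,j)$ and $(k,l)$ under which both groups vanish for some ordering). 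This shows that an exceptional collection forces pairwise non-crossing chords. The forest condition should follow from a counting/rank argument: an exceptional collection of $n$ distinct indecomposables whose chords form $n$ non-crossing chords on $n+1$ boundary vertices must be acyclic, since a cycle among the chords would force a relation that violates exceptionality (or simply because $n$ edges on $n+1$ vertices that are non-crossing and spanning must form a tree).

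Next I would construct the inverse $\Psi: \mathcal{D} \to \overline{\mathcal{E}}$ by sending a diagram $d = \{c(i_\ell,j_\ell)\}_{\ell\in[n]}$ to the collection $\{X_{i_\ell,j_\ell}\}_{\ell\in[n]}$, and verify that this set can indeed be ordered to form an exceptional sequence. Here I would use that a forest on $n+1$ vertices with $n$ edges is a spanning tree, and then argue that the non-crossing spanning tree structure admits an ordering of its chords (for instance via a suitable traversal, peeling off chords that are ``sources'' with respect to the $\Hom$/$\Ext$ partial order) realizing the exceptional sequence axioms $\Hom_{kQ}(V_j,V_i)=0$ and $\Ext^1_{kQ}(V_j,V_i)=0$ for $i<j$. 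Since $\Phi$ and $\Psi$ are manifestly inverse on the level of the chord-to-representation labeling, once both are shown to land in the correct target sets the bijectivity is immediate.

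The main obstacle I anticipate is the $\Hom$/$\Ext$ dictionary, namely pinning down the exact combinatorial criterion on chords $c(i,j), c(k,l)$ equivalent to the existence of an ordering making $\{X_{i,j}, X_{k,l}\}$ exceptional, and confirming it coincides with non-crossing. For type $\AAA_n$ this is a finite case analysis (the chords can share $0$, $1$, or $2$ endpoints, and can be nested, crossing, or disjoint), so it is routine but must be done carefully, particularly the boundary cases where chords share an endpoint. The second delicate point is the ordering argument for $\Psi$: one must show that a non-crossing spanning tree always admits a linear order of its edges compatible with the exceptionality constraints, which I expect to handle by inducting on $n$, removing a pendant chord or a chord incident to a suitable extreme marked point and invoking the inductive hypothesis on the remaining diagram. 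Since this is Araya's theorem cited as \cite[Thm. 1.1]{a13}, I would at this stage defer the full combinatorial verification to that reference and simply indicate the structure of the argument.
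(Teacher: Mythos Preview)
The paper does not supply its own proof of this statement: Theorem~\ref{a13} is quoted directly from Araya \cite[Thm.~1.1]{a13} and used as a black box. So there is nothing to compare your proposal against at the level of that theorem itself. You correctly recognize this at the end of your proposal when you defer to the reference.

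That said, your outline is essentially the right one, and the paper does carry out the analogous argument for the \emph{labeled} refinement (Theorem~\ref{firstmainresult}). The $\Hom/\Ext$ dictionary you identify as the main obstacle is exactly the content of Lemmas~\ref{first}--\ref{cross}: these pin down, for each relative position of two chords (sharing a left endpoint, a right endpoint, a middle endpoint, disjoint, or crossing), whether the corresponding pair of indecomposables forms an exceptional pair and in which order. In particular Lemma~\ref{cross} gives precisely your ``crossing $\Leftrightarrow$ no exceptional ordering'' statement, and Lemma~\ref{separatechords} handles the disjoint case. Your proposed ordering argument for $\Psi$ via peeling/induction is replaced in the paper by the observation that a \emph{good labeling} always exists on a noncrossing tree, which is the content of the clockwise-order condition; this is a slightly slicker packaging of the same inductive idea. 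One small point: your claim that the forest condition follows automatically from ``$n$ noncrossing edges on $n+1$ vertices'' is not quite right as stated (noncrossing graphs can have cycles, e.g.\ a triangle on adjacent boundary vertices), but since a CEC consists of $n$ objects with linearly independent dimension vectors the chords are distinct and span, so $n$ edges on $n+1$ vertices forces a tree once you know it is connected---or, more directly, a cycle among the chords would force a pair violating the exceptional condition via Lemmas~\ref{first}--\ref{last}.
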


Let $d(k)$ be a labeled diagram, let $i$ be any marked point, and let $((c(i, j_{j_1}), s_1), \ldots, (c(i, j_{j_r}), s_r))$ be the complete list of chords in $d(k)$ involving $i$ ordered so that chords appearing later in the list are clockwise from those earlier in the list in $d(k)$. We say the chord-labeling of $d(k)$ is \textbf{good} if for each marked point $i$ one has $s_1 < \cdots < s_r$. The labeling of the diagram in the previous example is not good.

We now state our first main result. Let $\mathcal{D}(k)$ denote the set of diagrams with $k$ chords and with good labelings and let $\mathcal{E}(k) := \{\text{exceptional sequences } \xi = (V_1,\ldots, V_k)\}.$

\begin{theorem}\label{firstmainresult}
Let $k \in [n]$. One has a bijection $\widetilde{\Phi}: \mathcal{E}(k) \to \mathcal{D}(k)$ given by $${\xi} = (X_{i_1,j_1},\ldots, X_{i_k,j_k}) \mapsto \{(c(i_\ell,j_\ell), k+1 - \ell)\}_{\ell \in [k]}.$$
\end{theorem}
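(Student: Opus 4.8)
The plan is to prove the statement in two stages: first an unordered version identifying exceptional collections with (unlabeled) chord diagrams, and then an analysis of how the sequence-ordering is recorded by the goodness of the labeling. For the unordered version, I would show that sending an exceptional collection $\{X_{i_\ell,j_\ell}\}_{\ell\in[k]}$ to the chord diagram $\{c(i_\ell,j_\ell)\}_{\ell\in[k]}$ is a bijection between $k$-element exceptional collections and $k$-chord diagrams. This follows from Araya's Theorem \ref{a13} together with two extension facts: every exceptional sequence completes to a CES (so every exceptional collection sits inside a CEC), and every noncrossing forest on the $n+1$ marked points completes to a noncrossing spanning tree. Since $\Phi$ acts chord-by-chord, restricting $\Phi$ and $\Phi^{-1}$ to the relevant subsets identifies the two sets; here one uses that any subsequence of an exceptional sequence is again an exceptional sequence, so restriction sends exceptional collections to exceptional collections in both directions.

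Second, I would fix such a collection/diagram and translate the ordering condition into a purely local combinatorial one, via a $\Hom$/$\Ext$ dictionary for the interval modules $X_{i,j}$ (supported on $\{i+1,\dots,j\}$), all computed from the Euler form of $kQ$. The two key computations are: (a) if $c(i,j)$ and $c(p,q)$ share no marked point, then $\Hom_{kQ}$ and $\Ext^1_{kQ}$ between $X_{i,j}$ and $X_{p,q}$ vanish in both directions, because such chords correspond to intervals that are either strictly nested or separated by a gap, so their relative order is unconstrained; and (b) if two chords share a marked point $a$, then exactly one of the two ordered pairs carries a nonzero $\Hom$ or $\Ext^1$, so in any exceptional sequence their order is forced, with the module admitting a nonzero map or extension into the other forced to appear earlier. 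Working out (b) in its three configurations at $a$ (two chords to the counterclockwise side, two to the clockwise side, or one to each side) shows that the forced order always agrees with the clockwise order of the chords at $a$: the chord forced to appear earlier is the one that is more clockwise at $a$.

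With the dictionary in hand the theorem assembles quickly. Because the exceptional-sequence condition is imposed on each ordered pair separately, an ordering of the modules is an exceptional sequence if and only if it respects the forced order at every shared marked point (unconstrained pairs being automatically fine by (a)). Under the label rule $\ell \mapsto k+1-\ell$, ``appears earlier in the sequence'' becomes ``has the larger label'', so respecting the forced order at $a$ is exactly the requirement that labels increase as the chords at $a$ are read clockwise, i.e.\ goodness. This yields well-definedness (an exceptional sequence maps to a good labeling of a genuine diagram), surjectivity (reading a good labeling off in decreasing label order produces an ordering respecting every forced order, hence an exceptional sequence by stage one), and injectivity is immediate since the labeled diagram records both the chords, hence the modules $X_{i_\ell,j_\ell}$, and via the labels their order.

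The main obstacle is stage two, and specifically the case analysis in (b) confirming that the representation-theoretic forced order coincides with the clockwise order at each marked point; this requires fixing a careful convention for the cyclic placement of $0,1,\dots,n$ and checking the same-side and opposite-side configurations against the Euler-form values of $\Hom$ and $\Ext^1$. A secondary point to verify is that the locally forced orders are globally consistent (acyclic): this holds for the chords of an exceptional collection because such a collection already admits at least one exceptional ordering, and it is precisely the failure of this consistency for three chords bounding a triangle that explains why diagrams are required to be forests.
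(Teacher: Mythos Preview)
Your proposal is correct, and its heart---the $\Hom/\Ext$ case analysis at a shared marked point showing that the forced order in an exceptional pair coincides with the clockwise order there---is exactly what the paper does in Lemmas~\ref{first}, \ref{middle}, \ref{last}, and~\ref{separatechords} (the paper computes via the Auslander--Reiten formula of Theorem~\ref{arformula} rather than the Euler form, but the content is the same). The one structural difference is your stage one: you obtain noncrossingness of the chords by completing an exceptional collection to a CEC and invoking Araya's Theorem~\ref{a13}, whereas the paper proves directly (Lemma~\ref{cross}) that crossing chords never form an exceptional pair and then builds the inverse map $\widetilde{\Psi}$ by hand. Your route is a clean reduction to the $k=n$ case and saves you the proof of Lemma~\ref{cross}, at the cost of importing the extension fact that every exceptional sequence completes to a CES; the paper's route is self-contained and yields Lemma~\ref{cross} as an independent byproduct. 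Once the unordered bijection is in place, your assembly of stage two into ``exceptional ordering $\Leftrightarrow$ good labeling'' matches the paper's argument essentially verbatim.
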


Before giving the proof of Theorem~\ref{firstmainresult}, we present several technical lemmas. We first mention an obvious lemma followed by a nontrivial result from representation theory of finite dimensional algebras.

\begin{lemma}
A sequence of representations $(V_1,\ldots, V_k)$ is exceptional if and only if $(V_i,V_j)$ is an \textbf{exceptional pair} (i.e. an exceptional sequence of length 2) for all $i,j \in [k]$ {with} $i < j$. 
\end{lemma}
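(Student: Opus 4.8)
The plan is to prove both directions of the biconditional directly from the definition of exceptional sequence given in the excerpt. Recall that $(V_1,\ldots,V_k)$ being exceptional means: each $V_j$ is exceptional (i.e. $\Ext^s_{kQ}(V_j,V_j)=0$ for all $s\ge 1$), and for every pair $i<j$ one has $\Hom_{kQ}(V_j,V_i)=0$ and $\Ext^s_{kQ}(V_j,V_i)=0$ for all $s\ge 1$. The key observation is that the conditions defining an exceptional sequence are entirely \emph{local} to pairs: the self-orthogonality conditions $\Ext^s_{kQ}(V_j,V_j)=0$ concern single representations, and the vanishing conditions $\Hom_{kQ}(V_j,V_i)=0$, $\Ext^s_{kQ}(V_j,V_i)=0$ each involve exactly two indices.

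First I would prove the forward direction ($\Rightarrow$). Suppose $(V_1,\ldots,V_k)$ is exceptional, and fix any $i<j$. To show $(V_i,V_j)$ is an exceptional pair, I must verify that it is itself an exceptional sequence of length $2$: both $V_i$ and $V_j$ must be exceptional representations, and the cross-vanishing must hold for the pair ordered as $(V_i,V_j)$. The self-orthogonality of $V_i$ and of $V_j$ is immediate, since these are among the hypotheses defining the exceptional sequence $(V_1,\ldots,V_k)$. For the cross conditions of the pair $(V_i,V_j)$, I relabel: treating $V_i$ as the ``first'' entry and $V_j$ as the ``second,'' the required vanishing is $\Hom_{kQ}(V_j,V_i)=0$ and $\Ext^s_{kQ}(V_j,V_i)=0$ for $s\ge 1$. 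But this is exactly the condition imposed by the original sequence for the indices $i<j$, so it holds. Hence $(V_i,V_j)$ is an exceptional pair.

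For the converse ($\Leftarrow$), suppose $(V_i,V_j)$ is an exceptional pair for all $i<j$. I must recover all the defining conditions of an exceptional sequence for the full list $(V_1,\ldots,V_k)$. The self-orthogonality $\Ext^s_{kQ}(V_j,V_j)=0$ of each individual $V_j$ follows because each $V_j$ appears as an entry in some pair (e.g.\ $(V_j, V_{j'})$ with $j<j'$, or $(V_{j'},V_j)$ with $j'<j$; when $k\ge 2$ such a partner always exists, and when $k=1$ there is nothing to check beyond the single representation being exceptional), and being an exceptional pair forces each of its entries to be an exceptional representation. The cross-vanishing conditions $\Hom_{kQ}(V_j,V_i)=0$ and $\Ext^s_{kQ}(V_j,V_i)=0$ for $i<j$ are precisely the defining conditions for the pair $(V_i,V_j)$ being exceptional, so they hold by hypothesis. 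Assembling these gives that $(V_1,\ldots,V_k)$ satisfies every clause in the definition, so it is exceptional.

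There is essentially no obstacle here—the lemma is, as the authors say, ``obvious,'' and the entire content is the bookkeeping observation that the definition of an exceptional sequence quantifies only over pairs of indices (plus the single-representation self-orthogonality, which is subsumed by the pairwise condition whenever a partner index exists). The only point requiring a moment's care is the degenerate treatment of self-orthogonality: I would note explicitly that the definition of an exceptional \emph{pair} already requires each of its two entries to be exceptional, so that in the converse direction no separate argument for $\Ext^s_{kQ}(V_j,V_j)=0$ is needed beyond invoking one pair containing $V_j$. With that remark in place, both implications are immediate rephrasings of the definitions.
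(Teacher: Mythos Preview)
Your proposal is correct and matches the paper's treatment: the paper does not actually supply a proof of this lemma, instead calling it ``obvious,'' and your direct unpacking of the definition is precisely the intended (and only reasonable) justification. The one edge case worth flagging is $k=1$: there the pairwise hypothesis is vacuous, so the backward implication does not by itself force $V_1$ to be exceptional---but in the paper's setting this is harmless, since for the linearly-ordered $\mathbb{A}_n$ quiver every indecomposable is exceptional and the lemma is only ever invoked for $k\ge 2$.
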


\begin{theorem}\cite[Cor. 2.14]{ass06}\label{arformula}
Let $A$ be a $k$-algebra and $M,N$ be two $A$-modules. If $\text{pd}M \le 1$ and $N$ is arbitrary, then there exists a $k$-linear isomorphism $$\Ext^1_A(M,N) \cong D\Hom_A(N,\tau M)$$ where $\tau$ denotes the \textbf{Auslander-Reiten translation} and $D(-) := \Hom_k(-, k)$ denotes the standard duality functor on the category of $k$-vector spaces.
\end{theorem}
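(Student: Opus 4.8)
The plan is to prove the stated isomorphism by reducing $\Ext^1_A(M,N)$ to a cokernel of a map between Hom-spaces of projectives, dualizing with $D$, and recognizing the outcome as $\Hom_A(N,\tau M)$ via the defining construction of $\tau$. Throughout I work in the setting relevant to the paper, where $A=kQ$ is finite-dimensional and all modules are finite-dimensional, so that $D=\Hom_k(-,k)$ is an exact contravariant duality satisfying $D^2\cong\mathrm{id}$. Recall that $\tau M$ is built from a minimal projective presentation $P_1\xrightarrow{p_1}P_0\to M\to 0$ by applying the \emph{Nakayama functor} $\nu:=D\Hom_A(-,A)$ and setting $\tau M:=\ker(\nu p_1\colon \nu P_1\to \nu P_0)$.

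First I would use the hypothesis $\mathrm{pd}\,M\le 1$. This forces the minimal projective presentation to be a short exact sequence $0\to P_1\xrightarrow{p_1}P_0\xrightarrow{p_0}M\to 0$, i.e.\ $p_1$ is injective with projective image. Applying the contravariant functor $\Hom_A(-,N)$ and using $\Ext^1_A(P_0,N)=0$ (as $P_0$ is projective) together with the vanishing of higher Ext (the resolution has length one) yields the four-term exact sequence
\[
0\to\Hom_A(M,N)\to\Hom_A(P_0,N)\xrightarrow{\,p_1^{*}\,}\Hom_A(P_1,N)\to\Ext^1_A(M,N)\to 0,
\]
so that $\Ext^1_A(M,N)\cong\operatorname{coker}(p_1^{*})$.

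Next I would establish the natural isomorphism
\[
\eta_P\colon D\Hom_A(P,N)\xrightarrow{\ \sim\ }\Hom_A(N,\nu P),\qquad P\text{ projective},
\]
natural in both $P$ and $N$. For $P=A$ this is the tensor-hom adjunction $\Hom_A(N,\Hom_k(A,k))\cong\Hom_k(A\otimes_A N,k)=DN$, and it extends to arbitrary projectives by additivity. Dualizing the cokernel presentation above (here $D$ is exact, so cokernels become kernels) and transporting along $\eta$ gives
\[
D\Ext^1_A(M,N)\cong\ker\!\bigl(\Hom_A(N,\nu p_1)\colon \Hom_A(N,\nu P_1)\to\Hom_A(N,\nu P_0)\bigr).
\]
Because $\Hom_A(N,-)$ is left exact, this kernel equals $\Hom_A(N,\ker(\nu p_1))=\Hom_A(N,\tau M)$ by the very definition of $\tau M$. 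Applying $D$ once more and invoking $D^2\cong\mathrm{id}$ delivers $\Ext^1_A(M,N)\cong D\Hom_A(N,\tau M)$, as desired.

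The main obstacle is the naturality of $\eta_P$ in the pair $(P,N)$: one must check that $\eta$ intertwines the dual $Dp_1^{*}$ with the induced map $\Hom_A(N,\nu p_1)$, since it is precisely this compatibility that converts $\operatorname{coker}(p_1^{*})$ into a kernel computing $\tau M$. A secondary point requiring care is the role of minimality and finite dimensionality: the identification $\ker(\nu p_1)=\tau M$ relies on the standard construction of $\tau$ from the minimal presentation, and the closing step uses $D^2\cong\mathrm{id}$, which is where finite-dimensionality of the modules (automatic for $A=kQ$ of type $\AAA_n$) enters.
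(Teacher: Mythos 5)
The paper does not actually prove this statement: it is imported verbatim from \cite[Cor.\ 2.14]{ass06}, so there is no in-paper argument to compare against. Your blind proof is correct in the setting where it is applied (a finite-dimensional algebra with finite-dimensional modules), and it is essentially an inlined, self-contained version of the textbook derivation: minimality plus $\text{pd}\,M \le 1$ does force the minimal presentation $0 \to P_1 \xrightarrow{p_1} P_0 \to M \to 0$ to be exact (the syzygy is projective, and a projective cover of a projective is an isomorphism), the four-term sequence gives $\Ext^1_A(M,N) \cong \operatorname{coker}(p_1^*)$, and your natural isomorphism $D\Hom_A(P,N) \cong \Hom_A(N,\nu P)$ on $\operatorname{add} A$ (adjunction at $P = A$, extended by additivity, natural in $P$ so that it intertwines $Dp_1^*$ with $\Hom_A(N,\nu p_1)$) correctly converts the dualized cokernel into $\ker(\nu p_1) = \tau M$, using left exactness of $\Hom_A(N,-)$ and $D^2 \cong \mathrm{id}$. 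It is worth noting that this differs from the route taken in \cite{ass06} itself, where Cor.\ 2.14 is deduced from the general Auslander--Reiten formula $\Ext^1_A(M,N) \cong D\overline{\Hom}_A(N,\tau M)$ (Hom modulo maps factoring through injectives) by showing that $\text{pd}\,M \le 1$ forces every map $N \to \tau M$ factoring through an injective to vanish, so that $\overline{\Hom}$ collapses to $\Hom$; your direct computation avoids stable categories entirely at the cost of proving only this special case. One small caution: the statement as printed in the paper (``$k$-algebra,'' ``two $A$-modules'') is more generous than either your proof or the cited result supports --- finite-dimensionality (or the Artin algebra/finitely generated setting) is genuinely needed for $D^2 \cong \mathrm{id}$ and for the identification $D\Hom_A(P,N) \cong \Hom_A(N,\nu P)$ --- and you correctly flag and use exactly this restriction, which is automatic for $A = kQ$ in the paper.
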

\begin{remark}
The path algebra $kQ$ is a hereditary algebra so $\text{pd}(M) \le 1$ for any $kQ$-module $M$. Furthermore, we have that
$$\begin{array}{rcl}
\tau X_{i,j} & = & \left\{\begin{array}{lcl} X_{i-1,j-1} & : & \text{if $i > 0$}\\ 0 & : & \text{if $i = 0$.} \end{array}\right.\\ \end{array}$$
\end{remark}

The next three lemmas are a crucial part of the proof of Theorem~\ref{firstmainresult}. They are needed to prove that the chord-labeling of $\widetilde{\Phi}(\xi)$, the labeled diagram determined by an exceptional sequence, is good. The proof of Lemma~\ref{last} is similar to that of Lemma~\ref{first} so we only prove the latter.

\begin{lemma}\label{first}
Let $0\le i < j < j^\prime \le n$. Then $(X_{i,j}, X_{i,j^\prime})$ is the unique exceptional pair defined by $X_{i,j}$ and $X_{i,j^\prime}$.
\end{lemma}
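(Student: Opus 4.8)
The plan is to show two things: first, that the pair $(X_{i,j}, X_{i,j'})$ with the representations in this specific order is exceptional, and second, that the reverse order $(X_{i,j'}, X_{i,j})$ is \emph{not} exceptional, so that this is indeed the unique exceptional pair formed from the two representations. Throughout I would rely on the fact that for the linearly-ordered $\AAA_n$ quiver every indecomposable is exceptional, so the only conditions to check are the vanishing of $\Hom$ and $\Ext^1$ between the two distinct representations in the appropriate direction. Since $kQ$ is hereditary, $\Ext^s$ vanishes automatically for $s \ge 2$, so only $\Ext^1$ matters.

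For the order $(X_{i,j}, X_{i,j'})$, I would need to verify that $\Hom_{kQ}(X_{i,j'}, X_{i,j}) = 0$ and $\Ext^1_{kQ}(X_{i,j'}, X_{i,j}) = 0$. The $\Hom$ computation is a direct check using the explicit description of the indecomposables $X_{i,j}$ as interval representations supported on $\{i+1, \ldots, j\}$: a nonzero morphism from $X_{i,j'}$ to $X_{i,j}$ would require compatibility of the identity maps on the overlapping support, and because the two intervals share the same left endpoint $i$ but $j < j'$, the commutativity constraints at the arrows force the morphism to be zero (the map out of the longer interval cannot be supported correctly against the shorter one when both start at $i$). For the $\Ext^1$ term I would invoke Theorem~\ref{arformula}, which gives $\Ext^1_{kQ}(X_{i,j'}, X_{i,j}) \cong D\Hom_{kQ}(X_{i,j}, \tau X_{i,j'})$. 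Using the formula for $\tau$ from the Remark, $\tau X_{i,j'} = X_{i-1,j'-1}$ (or $0$ when $i = 0$), so I would reduce to computing $\Hom_{kQ}(X_{i,j}, X_{i-1,j'-1})$ and checking it vanishes, again by examining the interval supports.

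To finish the uniqueness claim I would show the opposite order fails, i.e. that $(X_{i,j'}, X_{i,j})$ is not an exceptional pair. By the definition of exceptional sequence this requires $\Hom_{kQ}(X_{i,j}, X_{i,j'}) = 0$, so I would exhibit a nonzero morphism $X_{i,j} \to X_{i,j'}$ to rule this out. Such a morphism does exist precisely because the two intervals share the left endpoint $i$: the inclusion of the support of $X_{i,j}$ into that of $X_{i,j'}$ gives a genuine nonzero map (identity on the shared vertices $i+1, \ldots, j$), and one checks this is compatible with all the structure maps. The nonvanishing of this $\Hom$ space is exactly what breaks the exceptional pair condition in the reversed order.

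I expect the main obstacle to be the bookkeeping in the $\Hom$ and $\Ext$ computations, namely keeping careful track of which vertices lie in the support of each interval representation and verifying the commutativity of the relevant squares at each arrow. The conceptual content is light, but the proof hinges on getting the interval overlaps and the shift induced by $\tau$ exactly right; in particular the role of the common left endpoint $i$ is what simultaneously forces one $\Hom$ to vanish and the other to be nonzero, and I would want to present that asymmetry cleanly rather than burying it in index manipulations.
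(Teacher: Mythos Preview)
Your proposal is correct and follows essentially the same route as the paper's proof: both rule out the reverse order by exhibiting the nonzero inclusion in $\Hom_{kQ}(X_{i,j}, X_{i,j'})$, check $\Hom_{kQ}(X_{i,j'}, X_{i,j}) = 0$ directly from the interval description, and compute $\Ext^1_{kQ}(X_{i,j'}, X_{i,j})$ via Theorem~\ref{arformula} and the formula for $\tau$. The only difference is cosmetic---you verify the positive direction first and then the negative, while the paper does it in the opposite order.
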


\begin{proof}
Observe that $\Hom_{kQ}(X_{i,j}, X_{i,j^\prime}) \neq 0$ so $(X_{i,j^\prime}, X_{i,j})$ is not an exceptional pair. Note that an indecomposable representation $X_{k,\ell}$ is projective if and only if $k = 0.$  On the other hand, one checks that $\Hom_{kQ}(X_{i,j^\prime}, X_{i,j}) = 0.$ Similarly, by Theorem~\ref{arformula} one has
$$\begin{array}{rcl}
\Ext^1_{kQ}(X_{i,j^\prime}, X_{i,j}) & \cong & D\Hom_{kQ}(X_{i,j}, \tau X_{i,j^\prime})\\
& = & \left\{\begin{array}{lcl}D\Hom_{kQ}(X_{i,j}, X_{i-1, j^\prime - 1}) & : & \text{if $i > 0$}\\
D\Hom_{kQ}(X_{i,j}, 0) & : & \text{if $i = 0$} \end{array}\right.\\
& = & 0.
\end{array}$$
Thus $(X_{i,j}, X_{i,j^\prime})$ is the unique exceptional pair defined by $X_{i,j}$ and $X_{i,j^\prime}$.
\end{proof}

\begin{lemma}\label{middle}
Let $0\le i < j < j^\prime \le n$. Then $(X_{j,j^\prime}, X_{i,j})$ is the unique exceptional pair defined by $X_{i,j}$ and $X_{j,j^\prime}$.
\end{lemma}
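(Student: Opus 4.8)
The plan is to mirror the structure of the proof of Lemma~\ref{first}, since the two statements are parallel: I must show that exactly one of the two orderings of $X_{i,j}$ and $X_{j,j'}$ is an exceptional pair, and that the correct ordering is $(X_{j,j'}, X_{i,j})$. Both representations are automatically exceptional (indecomposable), so the entire content is in verifying the vanishing of the appropriate $\Hom$ and $\Ext^1$ spaces for one ordering while exhibiting nonvanishing that rules out the other.

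First I would rule out the ordering $(X_{i,j}, X_{j,j'})$. Recall $X_{i,j}$ places $k$ on vertices $i+1, \ldots, j$ and $X_{j,j'}$ places $k$ on vertices $j+1, \ldots, j'$; these supports are disjoint but adjacent at the seam between $j$ and $j+1$. For an exceptional pair $(A,B)$ one needs $\Hom(B,A)=0$ and $\Ext^1(B,A)=0$. I would compute $\Ext^1_{kQ}(X_{j,j'}, X_{i,j})$ via Theorem~\ref{arformula}: since $\tau X_{j,j'} = X_{j-1, j'-1}$ (as $j>0$ here because $j>i\ge 0$), this equals $D\Hom_{kQ}(X_{i,j}, X_{j-1,j'-1})$. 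The support of $X_{i,j}$ is $\{i+1,\dots,j\}$ and that of $X_{j-1,j'-1}$ is $\{j,\dots,j'-1\}$; these overlap at vertex $j$, and because the maps along the quiver are identities on the overlapping window, a nonzero morphism exists, so $\Ext^1_{kQ}(X_{j,j'},X_{i,j}) \neq 0$. This shows $(X_{i,j}, X_{j,j'})$ fails the exceptional-pair condition, eliminating that ordering.

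Next I would verify that the other ordering $(X_{j,j'}, X_{i,j})$ genuinely is an exceptional pair, which requires $\Hom_{kQ}(X_{i,j}, X_{j,j'}) = 0$ and $\Ext^1_{kQ}(X_{i,j}, X_{j,j'}) = 0$. The $\Hom$ vanishes because any morphism $X_{i,j} \to X_{j,j'}$ must commute with the quiver maps, and the disjointness of the supports (together with the orientation forcing the single overlap point to map to zero) leaves only the zero map. For the $\Ext^1$ term I would again apply Theorem~\ref{arformula}: $\Ext^1_{kQ}(X_{i,j}, X_{j,j'}) \cong D\Hom_{kQ}(X_{j,j'}, \tau X_{i,j})$, where $\tau X_{i,j} = X_{i-1,j-1}$ if $i>0$ and $\tau X_{i,j}=0$ if $i=0$. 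In the $i=0$ case this is immediately zero; in the $i>0$ case I would check that $\Hom_{kQ}(X_{j,j'}, X_{i-1,j-1}) = 0$ because the support $\{j+1,\dots,j'\}$ of $X_{j,j'}$ lies strictly to the right of the support $\{i,\dots,j-1\}$ of $X_{i-1,j-1}$, so there is no nonzero morphism. Hence both conditions hold and $(X_{j,j'}, X_{i,j})$ is exceptional, establishing uniqueness.

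I expect the main obstacle to be the careful bookkeeping of supports and quiver orientations when computing the $\Hom$ spaces between these interval modules, especially pinning down exactly where the supports overlap and confirming in each case whether the overlap forces a nonzero map (as in the $\Ext$ computation that kills the wrong ordering) or forces the zero map (as in the vanishing statements for the correct ordering). Because the arrows point as $i+1 \leftarrow i$, the direction of the identity maps on the overlapping window determines whether a compatible nonzero morphism can be built, and getting this direction right for each of the four $\Hom$ computations is the delicate part; the representation-theoretic inputs (Theorem~\ref{arformula} and the formula for $\tau$) are otherwise routine to apply.
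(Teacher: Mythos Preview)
Your proposal is correct and follows essentially the same approach as the paper's proof: you rule out the ordering $(X_{i,j},X_{j,j'})$ by showing $\Ext^1_{kQ}(X_{j,j'},X_{i,j})\cong D\Hom_{kQ}(X_{i,j},X_{j-1,j'-1})\neq 0$ via Theorem~\ref{arformula}, and then verify $(X_{j,j'},X_{i,j})$ is exceptional by checking $\Hom_{kQ}(X_{i,j},X_{j,j'})=0$ and, again via Theorem~\ref{arformula}, $\Ext^1_{kQ}(X_{i,j},X_{j,j'})\cong D\Hom_{kQ}(X_{j,j'},\tau X_{i,j})=0$ in both the $i=0$ and $i>0$ cases. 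This matches the paper's argument step for step; your additional support-overlap commentary merely elaborates what the paper leaves as ``one checks.''
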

\begin{proof}
Observe that by Theorem~\ref{arformula}, one has
$$\begin{array}{rcl}
\Ext^1_{kQ}(X_{j,j^\prime}, X_{i,j}) & \cong & D\Hom_{kQ}(X_{i,j}, \tau X_{j,j^\prime})\\
& = & D\Hom_{kQ}(X_{i,j}, X_{j-1, j^\prime - 1})\\
& \neq & 0.
\end{array}$$
Thus $(X_{i,j}, X_{j,j^\prime})$ is not an exceptional pair.

On the other hand, one checks that $\Hom_{kQ}(X_{i,j}, X_{j,j^\prime}) = 0$. Similarly, by Theorem~\ref{arformula} one has
$$\begin{array}{rcl}
\Ext^1_{kQ}(X_{i,j}, X_{j,j^\prime}) & \cong & D\Hom_{kQ}(X_{j,j^\prime}, \tau X_{i,j})\\
& = & \left\{\begin{array}{lcl}D\Hom_{kQ}(X_{j,j^\prime}, X_{i-1, j - 1}) & : & \text{if $i > 0$}\\
D\Hom_{kQ}(X_{j,j^\prime}, 0) & : & \text{if $i = 0$} \end{array}\right.\\
& = & 0.
\end{array}$$
Thus $(X_{j,j^\prime}, X_{i,j})$ is the unique exceptional pair defined by $X_{i,j}$ and $X_{j,j^\prime}$.
\end{proof}

\begin{lemma}\label{last}
Let $0 \le i < i^\prime < j \le n$. Then $(X_{i,j}, X_{i^\prime,j})$ is the unique exceptional pair defined by $X_{i,j}$ and $X_{i^\prime, j}$.
\end{lemma}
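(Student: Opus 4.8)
The statement asserts that among the two representations $X_{i,j}$ and $X_{i',j}$ (which share the same ``top'' endpoint $j$ but have $i < i'$), exactly one ordering yields an exceptional pair, namely $(X_{i,j}, X_{i',j})$. Since the paper has just proved Lemmas~\ref{first} and~\ref{middle} by the same template, I will follow that template exactly. The plan is to show two things: (1) the reverse ordering $(X_{i',j}, X_{i,j})$ fails to be an exceptional pair because some $\Hom$ or $\Ext^1$ obstruction is nonzero, and (2) the claimed ordering $(X_{i,j}, X_{i',j})$ satisfies both vanishing conditions $\Hom_{kQ}(X_{i',j}, X_{i,j}) = 0$ and $\Ext^1_{kQ}(X_{i',j}, X_{i,j}) = 0$.

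\noindent First I would rule out $(X_{i',j}, X_{i,j})$. Because both representations have the same right endpoint $j$ and $i < i'$, the support interval $[i'+1,j]$ of $X_{i',j}$ is contained in the support interval $[i+1,j]$ of $X_{i,j}$. The identity on the overlapping segment gives a nonzero morphism $X_{i,j} \to X_{i',j}$ (this is the inclusion of the smaller interval's picture into the larger, realized as a quotient map in the left-pointing convention of the $X_{i,j}$ diagram); hence $\Hom_{kQ}(X_{i,j}, X_{i',j}) \neq 0$, which immediately prevents $(X_{i',j}, X_{i,j})$ from being an exceptional pair. I should double-check the direction of the nonzero map against the arrow conventions in the excerpt, since this is exactly the kind of place where an off-by-one or direction error hides.

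\noindent Next I would verify that $(X_{i,j}, X_{i',j})$ is genuinely exceptional by checking the two vanishings. For $\Hom_{kQ}(X_{i',j}, X_{i,j})$: any morphism is determined by a compatible family of linear maps on the common support, and because $X_{i',j}$ has the strictly smaller interval $[i'+1,j]$ sitting inside $[i+1,j]$, the compatibility forced at the left boundary vertex $i'+1$ (where $X_{i,j}$ still has a nonzero space but the structure map into position $i'$ is the identity on $X_{i,j}$ and zero on $X_{i',j}$) forces the map to vanish; so $\Hom_{kQ}(X_{i',j}, X_{i,j}) = 0$. For the Ext term I would invoke Theorem~\ref{arformula} together with the formula for $\tau X_{i',j}$ in the Remark, computing
$$
\Ext^1_{kQ}(X_{i',j}, X_{i,j}) \cong D\Hom_{kQ}(X_{i,j}, \tau X_{i',j}),
$$
and then splitting into the cases $i' > 0$ (where $\tau X_{i',j} = X_{i'-1,j-1}$) and $i' = 0$ (where $\tau X_{i',j} = 0$, making the term vanish trivially). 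In the case $i' > 0$ I expect $\Hom_{kQ}(X_{i,j}, X_{i'-1,j-1}) = 0$ because the target interval $[i', j-1]$ does not contain the left endpoint needed for a nonzero map out of $X_{i,j}$; I would confirm this by the same support/boundary argument.

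\noindent The main obstacle, as usual in this family of lemmas, is not conceptual but bookkeeping: getting the nonvanishing in step~(1) to land on the correct ordering and confirming the precise $\Hom$-vanishing in the $i' > 0$ subcase of the Ext computation. The directions of the maps between the nested intervals $[i'+1,j] \subset [i+1,j]$ must be tracked carefully against the left-pointing arrow convention used to define $X_{i,j}$, and the shift $X_{i'-1,j-1}$ induced by $\tau$ must be compared against $X_{i,j}$ with the right index arithmetic. Once these support-interval inclusions are pinned down, the argument is a verbatim parallel of Lemma~\ref{first}, which is presumably why the authors omit it.
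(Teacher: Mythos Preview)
Your plan is correct and matches the paper's (omitted) argument, which simply declares the proof ``similar to that of Lemma~\ref{first}.'' You have correctly identified the three ingredients: $\Hom_{kQ}(X_{i,j},X_{i',j})\neq 0$ to exclude the reverse ordering, and $\Hom_{kQ}(X_{i',j},X_{i,j})=0$ together with $\Ext^1_{kQ}(X_{i',j},X_{i,j})\cong D\Hom_{kQ}(X_{i,j},\tau X_{i',j})=0$ for the claimed ordering.

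Two small cleanups. First, the case split on $i'$ is unnecessary: since $0\le i<i'$ you always have $i'\ge 1$, so $\tau X_{i',j}=X_{i'-1,j-1}$ without exception. Second, your stated heuristic for why $\Hom_{kQ}(X_{i,j},X_{i'-1,j-1})=0$ points to the wrong boundary. With the left-pointing orientation, a nonzero map $X_{a,b}\to X_{c,d}$ (supports $[a+1,b]$ and $[c+1,d]$) exists precisely when $a\le c$ and $b\le d$; here $a=i$, $b=j$, $c=i'-1$, $d=j-1$, and the obstruction is $b>d$ (i.e.\ $j>j-1$), coming from the commutativity square at the arrow $j\to j-1$. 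The left endpoints actually satisfy $i\le i'-1$, so they impose no constraint. Your promised ``support/boundary argument'' will of course reveal this once you write it out.
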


The next lemma characterizes indecomposable representations that determine exactly two exceptional pairs.

\begin{lemma}\label{separatechords}
The representations $X_{i,j}$ and $X_{k,\ell}$ define two distinct exceptional pairs if and only if $c(i,j)$ and $c(k,\ell)$ have no interior point and no marked points in common.
\end{lemma}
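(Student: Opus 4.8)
The plan is to reduce the statement to the simultaneous vanishing of four $\Hom$/$\Ext^1$ groups and then match the resulting inequalities with the geometry of the two chords. Since a representation of $Q$ is exceptional exactly when it is indecomposable, the representations $X_{i,j}$ and $X_{k,\ell}$ determine two distinct exceptional pairs precisely when both orderings $(X_{i,j}, X_{k,\ell})$ and $(X_{k,\ell}, X_{i,j})$ are exceptional pairs; by the definition of an exceptional pair this happens if and only if all four of $\Hom_{kQ}(X_{i,j}, X_{k,\ell})$, $\Hom_{kQ}(X_{k,\ell}, X_{i,j})$, $\Ext^1_{kQ}(X_{i,j}, X_{k,\ell})$, and $\Ext^1_{kQ}(X_{k,\ell}, X_{i,j})$ vanish. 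So I would first record that the claim is equivalent to the simultaneous vanishing of these four groups.

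Next I would establish the two numerical criteria that drive everything. A direct inspection of maps of interval modules (the same computation already used in Lemmas~\ref{first}--\ref{last}) gives, for $0 \le a < b \le n$ and $0 \le c < d \le n$, that $\Hom_{kQ}(X_{a,b}, X_{c,d}) \neq 0$ if and only if $a \le c < b \le d$. Feeding $\tau X_{a,b} = X_{a-1,b-1}$ into Theorem~\ref{arformula} yields $\Ext^1_{kQ}(X_{a,b}, X_{c,d}) \cong D\Hom_{kQ}(X_{c,d}, X_{a-1,b-1})$, whence $\Ext^1_{kQ}(X_{a,b}, X_{c,d}) \neq 0$ if and only if $c < a \le d < b$. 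Applying these to both orderings turns the four vanishing conditions into four inequalities among $i,j,k,\ell$ that can be checked by hand.

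Finally I would translate into chord geometry and run the case analysis. Two chords $c(i,j)$ and $c(k,\ell)$ (with $i<j$, $k<\ell$) share an interior point exactly when their endpoints interleave, that is $i<k<j<\ell$ or $k<i<\ell<j$, and they share a marked point exactly when one of the four indices coincides; hence "no interior point and no marked points in common" means the four indices are distinct and non-interleaving, i.e.\ the chords are nested or separated. For the coincident-index cases, Lemmas~\ref{first}, \ref{middle}, and~\ref{last} (which exhaust the ways two chords can meet at a marked point) already produce a \emph{unique} exceptional pair, so such chords never give two. For interleaving indices one checks that one of the four groups is forced to be nonzero in each ordering, so in fact neither ordering is exceptional and again there are not two pairs. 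Conversely, when the indices are distinct and non-interleaving, a short check of the inequalities shows all four groups vanish, giving two exceptional pairs. I expect the main obstacle to be purely organizational: arranging the argument so that the nested and separated subcases of disjointness, together with the two orientations of crossing, are treated uniformly rather than through a proliferation of nearly identical sign checks, and taking care with the exact strict-versus-non-strict inequalities in the $\Hom$ and $\Ext^1$ criteria, which is where the bookkeeping is delicate.
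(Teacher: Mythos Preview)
Your proposal is correct and follows essentially the same approach as the paper: reduce to $\Hom$/$\Ext^1$ computations, evaluate $\Ext^1$ via Theorem~\ref{arformula} and the formula for $\tau$, and case-analyze on the relative positions of the indices. The paper's own proof is actually more abbreviated than yours---it only treats the ``$\Leftarrow$'' direction explicitly (the separated case $i<j<k<\ell$ in detail, with the nested case $k<i<j<\ell$ declared similar), tacitly leaving the ``$\Rightarrow$'' direction to Lemmas~\ref{first}--\ref{last} and~\ref{cross}; your plan to invoke those lemmas for the shared-endpoint cases and to check the crossing case directly is exactly how the paper's surrounding lemmas supply that direction.
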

\begin{proof}
Without loss of generality, we can assume that there are the following two cases. 

$\begin{array}{ccccc}
a) & 0\le i < j < k < \ell \le n & \text{or} \\
b) & 0 \le k < i < j < \ell \le n.
\end{array}$

\noindent The proof of the statement in \textit{Case b)} is similar to that of \textit{Case a)} so we omit it. 

\textit{Case a)} Clearly, $\Hom_{kQ}(X_{i,j}, X_{k,\ell}) = 0$ and $\Hom_{kQ}(X_{k,\ell}, X_{i,j}) = 0$. Now observe that from Theorem~\ref{arformula}, we have
$$\begin{array}{rcl}
\Ext^1_{kQ}(X_{i,j}, X_{k,\ell}) & \cong & D\Hom_{kQ}(X_{k,\ell}, \tau X_{i,j})\\
& = & \left\{\begin{array}{lcl}D\Hom_{kQ}(X_{k,\ell}, X_{i-1, j - 1}) & : & \text{if $i > 0$}\\
D\Hom_{kQ}(X_{k,\ell}, 0) & : & \text{if $i = 0$} \end{array}\right.\\
& = & 0.
\end{array}$$
Using Theorem~\ref{arformula} again, we obtain
$$\begin{array}{rcl}
\Ext^1_{kQ}(X_{k,\ell}, X_{i,j}) & \cong & D\Hom_{kQ}(X_{i,j}, \tau X_{k,\ell})\\
& = & D\Hom_{kQ}(X_{i,j}, X_{k-1,\ell - 1}) \\
& = & 0.
\end{array}$$
Thus both $(X_{i,j}, X_{k,\ell})$ and $(X_{k,\ell}, X_{i,j})$ are exceptional pairs.
\end{proof}

The next lemma shows that $\widetilde{\Phi}(\xi)$ has no crossing chords.

\begin{lemma}\label{cross}
The representations $X_{i,j}$ and $X_{k,\ell}$ do not define an exceptional pair if  and only if $c(i,j)$ and $c(k,\ell)$ intersect in the interior of the disk.
\end{lemma}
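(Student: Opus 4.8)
The plan is to finish the case analysis begun in Lemmas~\ref{first}, \ref{middle}, \ref{last}, and \ref{separatechords}, which together already dispose of every non-crossing configuration. Up to relabeling, two distinct chords $c(i,j)$ and $c(k,\ell)$ (with $i<j$ and $k<\ell$) fall into exactly one of four mutually exclusive types: they share a marked point; their endpoints are disjoint; one pair of endpoints nests inside the other; or they cross in the interior of the disk. The first type is handled by Lemmas~\ref{first}, \ref{middle}, \ref{last}, and the disjoint and nested types are exactly the content of Lemma~\ref{separatechords} (``no interior point and no marked points in common''); in all of these cases at least one exceptional pair is produced. Consequently I would prove the converse by contraposition: if $X_{i,j}$ and $X_{k,\ell}$ define no exceptional pair, then the configuration is none of the first three, so $c(i,j)$ and $c(k,\ell)$ must cross. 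It then remains only to establish the forward direction, that crossing chords admit no exceptional pair.

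For the forward direction I would first record the single computational tool used throughout: for interval modules of the linearly-ordered quiver,
$$\Hom_{kQ}(X_{a,b}, X_{c,d}) \neq 0 \iff a \le c < b \le d,$$
which follows from a routine analysis of maps of interval representations (a nonzero morphism factors as a quotient of $X_{a,b}$ onto a submodule of $X_{c,d}$, forcing the supports to overlap in exactly this nested way). Combining this with Theorem~\ref{arformula} and $\tau X_{a,b} = X_{a-1,b-1}$ (for $a>0$) yields the companion criterion $\Ext^1_{kQ}(X_{a,b}, X_{c,d}) \neq 0 \iff c < a \le d < b$. These two criteria specialize cleanly to each of the previously proved lemmas, which is a useful consistency check before proceeding.

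Next I would reduce crossing to one inequality pattern. Two crossing chords satisfy either $i < k < j < \ell$ or $k < i < \ell < j$, and the latter is obtained from the former by interchanging the roles of the two chords, so it suffices to treat $i<k<j<\ell$. Here the criterion gives $\Hom_{kQ}(X_{i,j}, X_{k,\ell}) \neq 0$ immediately, since $i \le k < j \le \ell$; this rules out the pair $(X_{k,\ell}, X_{i,j})$. For the opposite order I would apply Theorem~\ref{arformula} to get $\Ext^1_{kQ}(X_{k,\ell}, X_{i,j}) \cong D\Hom_{kQ}(X_{i,j}, X_{k-1,\ell-1})$, and observe that $i \le k-1 < j \le \ell-1$ holds precisely because the chords cross, so this Ext group is nonzero and $(X_{i,j}, X_{k,\ell})$ is not exceptional either. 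As both orders fail, $X_{i,j}$ and $X_{k,\ell}$ define no exceptional pair.

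The main obstacle is the orientation bookkeeping in the Hom criterion: pinning down the weak-versus-strict inequalities and the arrow directions so that the single crossing condition $i<k<j<\ell$ feeds simultaneously into $\Hom_{kQ}(X_{i,j},X_{k,\ell})\neq 0$ and $\Ext^1_{kQ}(X_{k,\ell},X_{i,j})\neq 0$. Once that criterion is fixed, the crossing computation is a one-line verification and the converse follows by the contrapositive appeal to the earlier lemmas, so no further work is needed.
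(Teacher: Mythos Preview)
Your argument is correct and matches the paper's approach: reduce by symmetry to $i<k<j<\ell$, use $\Hom_{kQ}(X_{i,j},X_{k,\ell})\neq 0$ to kill $(X_{k,\ell},X_{i,j})$, and use the Auslander--Reiten formula with $\tau X_{k,\ell}=X_{k-1,\ell-1}$ to get $\Ext^1_{kQ}(X_{k,\ell},X_{i,j})\neq 0$, killing $(X_{i,j},X_{k,\ell})$. Your write-up is in fact a bit more complete than the paper's: you state the general criterion $\Hom_{kQ}(X_{a,b},X_{c,d})\neq 0 \iff a\le c<b\le d$ explicitly and you make the converse direction (non-crossing $\Rightarrow$ some exceptional pair exists) explicit by contraposition to Lemmas~\ref{first}, \ref{middle}, \ref{last}, \ref{separatechords}, whereas the paper's proof only writes out the crossing $\Rightarrow$ no-pair direction and leaves the converse to the reader.
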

\begin{proof}
Without loss of generality, we can assume that $0\le i < k < j < \ell \le n$. Observe that $\Hom_{kQ}(X_{i,j}, X_{k,\ell}) \neq 0$ so $(X_{k,\ell}, X_{i,j})$ is not an exceptional pair. By Theorem~\ref{arformula}, one has
$$\begin{array}{rcl}
\Ext^1_{kQ}(X_{k,\ell}, X_{i,j}) & \cong & D\Hom_{kQ}(X_{i,j}, \tau X_{k,\ell})\\
& = & D\Hom_{kQ}(X_{i,j}, X_{k-1, \ell - 1})\\
& \neq & 0.
\end{array}$$
Thus $(X_{i,j}, X_{k,\ell})$ is not an exceptional pair.
\end{proof}

\begin{remark}
Lemmas~\ref{first}, \ref{middle}, \ref{last}, \ref{separatechords}, and \ref{cross} can be deduced from \cite[Lemma 3.2]{a13}.
\end{remark}

\begin{proof}[Proof of Theorem~\ref{firstmainresult}]
Let $\xi \in \mathcal{E}(k)$. By Lemma~\ref{cross}, $\widetilde{\Phi}(\xi)$ has no crossing chords. Let $(V_1,V_2)$ be an exceptional pair appearing in $\xi$ with $V_i$ corresponding to chord $c_i$ in $\widetilde{\Phi}(\xi)$ for $i = 1,2$ where $c_1$ and $c_2$ intersect only at a marked point. Note that by the definition of $\widetilde{\Phi}(\xi)$, the chord label of $c_1$ is bigger than that of $c_2$. From Lemmas~\ref{first}, \ref{middle}, and \ref{last}, chord $c_1$ appears clockwise from $c_2$ in $\widetilde{\Phi}(\xi)$. Thus the chord-labeling of $\widetilde{\Phi}(\xi)$ is good so $\widetilde{\Phi}(\xi) \in \mathcal{D}(k)$ for any $\xi \in \mathcal{E}(k)$.

Let $\widetilde{\Psi}: \mathcal{D}(k) \to \mathcal{E}(k)$ be defined by $\{(c(i_\ell, j_\ell), \ell)\}_{\ell \in [k]} \mapsto (X_{i_k,j_k}, X_{i_{k-1}, j_{k-1}}, \ldots, X_{i_1,j_1})$. We will show that $\widetilde{\Psi}(d(k)) \in \mathcal{E}(k)$ for any $d(k) \in \mathcal{D}(k)$ and that $\widetilde{\Psi} = \widetilde{\Phi}^{-1}.$ Let $\widetilde{\Psi}(\{(c(i_\ell, j_\ell), \ell)\}_{\ell \in [k]}) = (X_{i_k,j_k}, X_{i_{k-1}, j_{k-1}},\ldots, X_{i_1, j_1}).$ Consider the pair $(X_{i_s,j_s}, X_{i_{s^\prime}, j_{s^\prime}})$ with $s > s^\prime.$ We will show that $(X_{i_s,j_s}, X_{i_{s^\prime}, j_{s^\prime}})$ is an exceptional pair and thus conclude that $\widetilde{\Psi}(\{(c(i_\ell, j_\ell), \ell)\}_{\ell \in [k]}) \in \mathcal{E}(k)$ for any $d(k) \in \mathcal{D}(k)$. Clearly, $c(i_s,j_s)$ and $c(i_{s^\prime}, j_{s^\prime})$ are noncrossing. If $c(i_s, j_s)$ and $c(i_{s^\prime}, j_{s^\prime})$ do not intersect at a marked point, then by Lemma~\ref{separatechords} $(X_{i_s,j_s}, X_{i_{s^\prime}, j_{s^\prime}})$ is exceptional. Now suppose $c(i_s, j_s)$ and $c(i_{s^\prime}, j_{s^\prime})$ intersect at a marked point. Because the chord-labeling of $\{(c(i_\ell, j_\ell), \ell)\}_{\ell \in [k]}$ is good, $c(i_s, j_s)$ is clockwise from  $c(i_{s^\prime}, j_{s^\prime})$. By Lemmas~\ref{first}, \ref{middle}, and \ref{last}, we have that $(X_{i_s,j_s}, X_{i_{s^\prime}, j_{s^\prime}})$ is exceptional.

To see that $\widetilde{\Psi} = \widetilde{\Phi}^{-1}$, observe that
$$\begin{array}{rcl}
\widetilde{\Phi}\left(\widetilde{\Psi}(\{(c(i_\ell, j_\ell), \ell)\}_{\ell \in [k]})\right) & = &\widetilde{\Phi}\left((X_{i_k,j_k}, X_{i_{k-1}, j_{k-1}},\ldots, X_{i_1,j_1})\right)\\
& = & \{(c(i_\ell, j_\ell), k+1 - (k+1 - \ell))\}_{\ell \in [k]} \\
& = & \{(c(i_\ell, j_\ell), \ell)\}_{\ell \in [k]}.
\end{array}$$
Thus $\widetilde{\Phi}\circ \widetilde{\Psi} = 1_{\mathcal{D}(k)}.$ Similarly, one shows that $\widetilde{\Psi}\circ \widetilde{\Phi} = 1_{\mathcal{E}(k)}.$ Thus $\widetilde{\Phi}$ is a bijection.
\end{proof}

\section{Mutation of oriented chord diagrams}
\label{sec:mut}

In this section, we will develop a process of mutation for oriented diagrams that is analogous to the previously defined mutation for quivers and exchange matrices.  The following lemma gives a way to associate an oriented diagram to any given $\bc$-matrix.

\begin{lemma}\label{cmatdiag}
Let $C \in \textbf{c}$-mat$(Q)$ with $\{\overrightarrow{c_i}\}_{i \in [n]}$ its \textbf{c}-vectors. Let $\overrightarrow{c_i} = \pm\underline{\dim}(X_{i_1,i_2})$ where the sign of $\overrightarrow{c_i}$ is determined by $C$. There is an injective map $\textbf{c}\text{-mat}(Q)\to \overrightarrow{\mathcal{D}} := \{\text{oriented diagrams } \overrightarrow{d} = \{\overrightarrow{c}(i_\ell, j_\ell)\}_{\ell \in [n]}\}$ given by 
$$\overrightarrow{c_i} \mapsto \left\{\begin{array}{r c l c c c c c} \overrightarrow{c}(i_1,i_2) & : & \text{\overrightarrow{c_i} is positive}\\ \overrightarrow{c}(i_2,i_1) & : & \text{\overrightarrow{c_i} is negative.}
\end{array}\right.$$
Thus each \textbf{c}-matrix $C$ determines a unique oriented diagram denoted $\overrightarrow{d}_C$ with $n$ oriented chords.
\end{lemma}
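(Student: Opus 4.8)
The plan is to establish that the displayed map is well-defined and injective by analyzing two separate features of a $\bc$-matrix $C$: the \emph{absolute values} of its $\bc$-vectors, and their \emph{signs}. The key observation is that these two pieces of data together determine an oriented diagram, and that distinct $\bc$-matrices cannot produce the same oriented diagram because the oriented diagram lets us recover both pieces of data. I would phrase the argument around Theorem~\ref{st}, which is the engine connecting $\bc$-matrices to exceptional sequences.

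First I would check the map is well-defined, i.e. that $\overrightarrow{d}_C$ is genuinely an oriented diagram in $\overrightarrow{\mathcal{D}}$. By Theorem~\ref{st}, the representations $V_i$ with $|\overrightarrow{c_i}| = \underline{\dim}(V_i)$ can be ordered into a CES. Dropping the orientations and labels, this CES yields a CEC, which by Theorem~\ref{a13} corresponds to an honest (unoriented) chord diagram $d = \{c(i_\ell, j_\ell)\}_{\ell \in [n]}$; in particular the underlying chords are pairwise noncrossing and form a forest. Placing an orientation on each chord $c(i_1,i_2)$ according to whether $\overrightarrow{c_i}$ is positive (orient $i_1 \to i_2$) or negative (orient $i_2 \to i_1$) does not disturb the underlying chord diagram, so $\overrightarrow{d}_C$ is a legitimate element of $\overrightarrow{\mathcal{D}}$. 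This shows each $C$ determines a unique oriented diagram with $n$ oriented chords.

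For injectivity, I would suppose $\overrightarrow{d}_C = \overrightarrow{d}_{C'}$ for two $\bc$-matrices $C, C'$ and recover $C$ from the oriented diagram. Forgetting orientations, the common underlying chord diagram determines, via Theorem~\ref{a13}, the same CEC for both; hence the multisets of dimension vectors $\{|\overrightarrow{c_i}|\}$ coincide, so $C$ and $C'$ have the same set of $\bc$-vectors up to sign. The orientation of each chord records precisely the sign of the corresponding $\bc$-vector: a chord oriented $i_1 \to i_2$ forces the positive sign and $i_2 \to i_1$ the negative sign. Thus the oriented diagram pins down both the absolute value and the sign of every row, i.e. the full set $\{\overrightarrow{c_i}\}_{i \in [n]}$. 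Since a $\bc$-matrix is only defined up to permutation of its rows, recovering the set of signed $\bc$-vectors is exactly recovering $C$, giving $C = C'$.

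The main obstacle I anticipate is the well-definedness step rather than injectivity: one must confirm that the sign data attached to the $\bc$-vectors of a genuine $\bc$-matrix is \emph{consistent} with producing a valid oriented chord diagram, and in particular that no ambiguity arises from the fact that a single chord $c(i_1,i_2)$ could a priori be decorated with either orientation. This is resolved cleanly because sign-coherence (the Derksen--Weyman--Zelevinsky theorem quoted in the excerpt) guarantees each $\overrightarrow{c_i}$ is either wholly positive or wholly negative, so the sign---and hence the orientation---of each chord is unambiguous. The only subtlety worth spelling out is that the orientation assignment depends only on the intrinsic sign of $\overrightarrow{c_i}$ and not on any choice of ordering used to realize the CES, which is what makes the assignment $C \mapsto \overrightarrow{d}_C$ a function on $\textbf{c}\text{-mat}(Q)$ in the first place.
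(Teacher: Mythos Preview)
Your proposal is correct and follows essentially the same approach as the paper: invoke Theorem~\ref{st} to obtain a CEC from $C$, apply Theorem~\ref{a13} to get an unoriented chord diagram, and then orient each chord according to the sign of its $\bc$-vector. The paper's proof is terser on injectivity (it simply asserts the map is ``clearly'' injective), whereas you spell out explicitly how the oriented diagram encodes both the absolute values and the signs of the $\bc$-vectors and hence determines $C$ up to row permutation; this extra detail is welcome but not a different argument.
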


\begin{proof}
By Theorem~\ref{st}, any \textbf{c}-matrix $C$ defines a unique CEC $\overline{\xi}$ and therefore by Theorem~\ref{a13} defines a unique chord diagram $d_C := \{c(j_\ell, k_\ell)\}_{\ell \in [n]} = \Phi(\overline{\xi}).$ Observe that for any $\ell \in [n]$ there exists a \textbf{c}-vector $\overrightarrow{c_i}$ in $C$ such that $c(j_\ell, k_\ell) = c(i_1, i_2)$ where $\overrightarrow{c_i} = \pm\underline{\dim}(X_{i_1,i_2})$ where the sign of $\overrightarrow{c_i}$ is determined by $C$. Orient the chords of $d_C$ according to the rule 
$${c(i_1,i_2)} \mapsto \left\{\begin{array}{r c l c c c c c} \overrightarrow{c}(i_1,i_2) & : & \text{\overrightarrow{c_i} is positive}\\ \overrightarrow{c}(i_2,i_1) & : & \text{\overrightarrow{c_i} is negative.}
\end{array}\right.$$
Define $\overrightarrow{d}_C$ to be the resulting oriented chord diagram. This clearly defines the desired injective map.
\end{proof}

\begin{remark}\label{natmap}
There is a natural map
$$\begin{array}{rcl}
\overrightarrow{\mathcal{D}} & \stackrel{|\cdot|}{\longrightarrow} & \mathcal{D}\\
\overrightarrow{d} = \{\overrightarrow{c}(i_\ell, j_\ell)\}_{\ell \in [n]} & \longmapsto & |\overrightarrow{d}| := \{c(i_\ell, j_\ell)\}_{\ell \in [n]}.
\end{array}$$
\end{remark}

The next lemma will be helpful in describing mutation of oriented diagrams in this section.
\begin{lemma}\label{shareendpt}
Let $R \in EG(\widehat{Q})$, let $B = (b_{ij})$ denote its exchange matrix with $\bc$-matrix $C$, and let $\overrightarrow{d}_{C}$ denote the oriented diagram associated to $C$.  If $b_{kj} \neq 0$ for some $k,j \in [n]$, then the oriented chords $\overrightarrow{c_k}(i_1(k), i_2(k))$ and $\overrightarrow{c_j}(i_1(j), i_2(j))$ of $\overrightarrow{d}_{C}$ corresponding to $k$ and $j$ share an endpoint.
\end{lemma}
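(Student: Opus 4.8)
# Proof Proposal for Lemma~\ref{shareendpt}

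\noindent The plan is to reduce the statement to a concrete computation about what $b_{kj} \neq 0$ forces on the dimension vectors of the two indecomposable representations $X_{i_1(k), i_2(k)}$ and $X_{i_1(j), i_2(j)}$, and then to translate the resulting numerical relation back into the geometry of the chords $\overrightarrow{c_k}$ and $\overrightarrow{c_j}$. The key observation I would build on is that the off-diagonal entries $b_{kj}$ of the exchange matrix $B$ (for $k,j \in [n]$) record homological data between the two representations. Specifically, by Theorem~\ref{st} the $\bc$-vectors of $C$ give the dimension vectors (up to sign) of representations forming a CES, and the entry $b_{kj}$ is governed by $\dim \Hom_{kQ} - \dim \Ext^1_{kQ}$ between the corresponding indecomposables (the Euler form applied to the two dimension vectors). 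So my first step would be to make precise that $b_{kj} \neq 0$ implies the Euler pairing $\langle \underline{\dim}(X_{i_1(k), i_2(k)}), \underline{\dim}(X_{i_1(j), i_2(j)}) \rangle$ is nonzero.

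\noindent Next I would compute this Euler form explicitly for two indecomposables of the linearly-ordered $\AAA_n$ quiver. For $X_{a,b}$ the dimension vector is the indicator of the ``interval'' $(a, b]$ in the vertex set, and for the linear quiver the Euler form of two such intervals has a clean combinatorial description: it is nonzero precisely when the two chords $c(a,b)$ and $c(a',b')$ share a boundary marked point (equivalently, when the intervals $(a,b]$ and $(a',b']$ are nested with a common endpoint or abut at a single endpoint, but do \emph{not} properly cross and are \emph{not} disjoint-with-no-shared-point). This is exactly the content already isolated in Lemmas~\ref{first}, \ref{middle}, \ref{last}, \ref{separatechords}, and \ref{cross}: those lemmas partition all pairs of chords into (i) crossing, (ii) sharing only a marked point, and (iii) sharing neither an interior point nor a marked point, and they compute the relevant $\Hom$ and $\Ext^1$ spaces in each case. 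So rather than recompute Euler forms from scratch, I would invoke those lemmas to read off when the homological pairing vanishes.

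\noindent The cleanest route, then, is the contrapositive: suppose the chords $\overrightarrow{c_k}$ and $\overrightarrow{c_j}$ do \emph{not} share an endpoint. Since $d_C$ is a genuine (noncrossing) chord diagram by Lemma~\ref{cmatdiag}, the two chords are noncrossing, so they cannot intersect in the interior either; hence they share neither an interior point nor a marked point. By Lemma~\ref{separatechords} this is exactly the case where $X_{i_1(k), i_2(k)}$ and $X_{i_1(j), i_2(j)}$ form \emph{two} distinct exceptional pairs, which the proof of that lemma shows by verifying that all four spaces $\Hom$ and $\Ext^1$ between the two representations vanish in both orders. With every homological pairing zero, the Euler form vanishes, so $b_{kj} = 0$. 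Taking the contrapositive gives the claim.

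\noindent The step I expect to be the main obstacle is the first one: cleanly justifying the identification of $b_{kj}$ with the Euler/homological pairing between the corresponding $\bc$-vectors. The entries of $B$ live in the \emph{mutated} ice quiver $R$, whereas the $\Hom$/$\Ext$ computations are carried out for representations of the original quiver $Q$; bridging these requires the Speyer--Thomas correspondence (Theorem~\ref{st}) and the sign-coherence of $\bc$-vectors to guarantee that the mutable part of $B$ is determined by the pairwise homological data of the indecomposables whose dimension vectors are $|\overrightarrow{c_k}|$ and $|\overrightarrow{c_j}|$. Once that identification is in hand, the rest is a direct appeal to the already-proved case analysis of Lemmas~\ref{separatechords} and \ref{cross}, so the geometric conclusion about shared endpoints follows with essentially no further computation.
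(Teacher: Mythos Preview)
Your strategy is genuinely different from the paper's, and the difference is instructive. The paper does \emph{not} pass through any identification of $b_{kj}$ with an Euler or homological pairing. Instead it argues directly from the matrix mutation formula: assuming (via Corollary~\ref{coro:onearrow}) that $b_{kj}=1$, it computes the mutated $\bc$-vector, e.g.\ $\overrightarrow{c_k}(\mu_jR)=\overrightarrow{c_k}(R)+\overrightarrow{c_j}(R)$, and then uses the fact (cited from \cite{c12}) that every $\bc$-vector is $\pm\underline{\dim}(X_{s,t})$ for some interval $[s+1,t]$. For a sum or difference of two interval indicator vectors to again be (up to sign) an interval indicator vector, the two intervals must share an endpoint. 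That is the whole argument, and it uses nothing beyond the mutation rule and the shape of indecomposable dimension vectors in type~$\AAA$.

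Your route, by contrast, hinges entirely on the claim that $b_{kj}$ is computed by the (antisymmetrized) Euler form of $X_{i_1(k),i_2(k)}$ and $X_{i_1(j),i_2(j)}$. You correctly flag this as the main obstacle, but the justification you offer does not close the gap: Theorem~\ref{st} only asserts the existence of a CES ordering of the $\bc$-vectors with a sign-separation property; it says nothing about how the mutable part of $B_R$ is determined by pairwise $\Hom$/$\Ext$ data among those representations. Establishing that identification requires input from cluster-tilting theory (relating the mutable $B$-matrix to the Gabriel quiver of the endomorphism ring of a cluster-tilting object, and then relating that object to the $\bc$-vectors) which is well outside the scope of what the paper sets up. Without it, the contrapositive step ``all $\Hom$ and $\Ext^1$ vanish $\Rightarrow b_{kj}=0$'' is unproved, and your argument does not go through. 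If you can supply a precise citation or a self-contained proof of that identification in type~$\AAA$, your approach would work and would give a pleasant conceptual alternative; but as written it is the paper's elementary mutation argument that actually proves the lemma.
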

\begin{lemma}\cite[Prop. 2.4]{bv08}\label{BV}
A quiver $R$ (without frozen vertices) is mutation equivalent to $Q$ if and only if $R$ satisfies the following:\\
$\begin{array}{rrl}
      \textit{i)} &  & \text{All non-trivial cycles in the underlying graph of $R$ are oriented and of}\\
      & & \text{length 3.} \\
      \textit{ii)} & & \text{Any vertex has at most four neighbors.} \\ 
      \textit{iii)} & & \text{If a vertex has four neighbors, then two of its adjacent arrows belong to}\\
      & & \text{one 3-cycle, and the other two belong to another 3-cycle.}\\
      \textit{iv)} & & \text{If a vertex has exactly three neighbors, then two of its adjacent arrows}\\
      & & \text{belong to a 3-cycle, and the third arrow does not belong to any 3-cycle.}\\
     \end{array}$ 
\end{lemma}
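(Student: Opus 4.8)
The plan is to prove both implications by exploiting that the class $\mathcal{C}$ of quivers satisfying conditions (i)--(iv) is closed under mutation; this reduces the ``only if'' direction to a single check on $Q$ and lets the ``if'' direction proceed by a reduction argument. Throughout I would use that mutation preserves the number of vertices, so any $R$ mutation equivalent to $Q$ automatically has $n$ vertices, and that $R$ is connected (connectedness of the underlying graph being preserved under mutation of a connected quiver).

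For the ``only if'' direction, I would first observe that $Q$ itself lies in $\mathcal{C}$: its underlying graph is the path $1 - 2 - \cdots - n$, so it has no cycles and every vertex has at most two neighbors, making (i) and (ii) hold and (iii), (iv) vacuous. It then suffices to show that $\mathcal{C}$ is stable under $\mu_k$ for every $k$, since $\mu_k$ is an involution and $Q \in \mathcal{C}$. Because $\mu_k$ alters only the arrows incident to $k$ and the arrows between neighbors of $k$, and because (ii)--(iv) bound the degree of $k$ by $4$ and pin down the arrows among its neighbors (two disjoint $3$-cycles when $\deg k = 4$; one $3$-cycle plus a pendant arrow when $\deg k = 3$), there are only finitely many local configurations to inspect. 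In each I would compute $\mu_k(R)$ explicitly and verify that no cycle of length $\ge 4$ appears and that every triangle created is an oriented $3$-cycle (so (i) holds), that no double arrow survives step (3) of mutation, that degrees stay $\le 4$ (so (ii) holds), and that the $3$-cycle bookkeeping required by (iii) and (iv) is restored around $k$ and its former neighbors.

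For the ``if'' direction, suppose $R \in \mathcal{C}$ and induct on the number of $3$-cycles of $R$. If $R$ has no $3$-cycle then, by (iii) and (iv), no vertex can have three or more neighbors (any such vertex would be required to sit inside a $3$-cycle), so every vertex has degree at most two; a connected quiver with this property is an orientation of the path on $n$ vertices, and all orientations of a tree are mutation equivalent (passing between them by mutating at sources and sinks, which create no new arrows), hence $R$ is mutation equivalent to $Q$. If $R$ does contain a $3$-cycle, I would locate a vertex $k$ at which mutation destroys a $3$-cycle while creating none, so that $\mu_k(R)$ has strictly fewer $3$-cycles; since $\mu_k(R) \in \mathcal{C}$ by the stability established above, the inductive hypothesis gives $\mu_k(R)$ mutation equivalent to $Q$, and applying $\mu_k$ once more yields the same for $R$.

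The main obstacle is the second half of the ``if'' direction: guaranteeing the existence of a single cycle-reducing mutation. Mutating at a degree-two vertex of a $3$-cycle $a \to b \to c \to a$ cleanly opens it into the path $c \to b \to a$ and lowers the arrow count, but one must argue that such a vertex always exists whenever a $3$-cycle is present, and rule out that higher-degree vertices force every available mutation to spawn a new $3$-cycle elsewhere. The cleanest route I see is through the polygon model: the quivers in $\mathcal{C}$ are exactly the quivers of triangulations of an $(n+3)$-gon, every triangulation of a polygon has an \emph{ear} (a triangle with two boundary sides), and flipping the diagonal of an ear is precisely a cycle-reducing mutation. Setting up that dictionary --- and checking that (i)--(iv) characterize triangulation quivers --- would absorb most of the work, but it simultaneously clarifies both directions of the lemma.
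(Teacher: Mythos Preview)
The paper does not supply its own proof of this lemma; it is quoted verbatim as \cite[Prop.~2.4]{bv08} and used as a black box (only its immediate consequence, Corollary~\ref{coro:onearrow}, is invoked later). So there is nothing in the paper to compare your argument against.

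That said, your outline matches the standard strategy and is sound in broad strokes. A couple of remarks. For the mutation-stability check in the ``only if'' direction, the case analysis is genuinely finite but not entirely local: when you mutate at $k$ you must verify (iii)--(iv) not just at $k$ but also at each neighbor of $k$, since their $3$-cycle incidences change; this is routine but worth stating explicitly. For the ``if'' direction, your instinct to pass through the polygon model is exactly the right one --- indeed, the original argument in \cite{bv08} essentially sets up the bijection between quivers in $\mathcal{C}$ and triangulations of the $(n+3)$-gon, after which both directions become transparent (mutation is diagonal flip, and any two triangulations are flip-connected). Your alternative of directly finding a degree-two vertex on some $3$-cycle also works: since the underlying graph of $R$ has $n$ vertices and, by (i), the $3$-cycles are edge-disjoint from one another except possibly at shared vertices governed by (iii)--(iv), an Euler-characteristic or leaf-counting argument on the ``block tree'' of $3$-cycles and bridge edges produces such a vertex. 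Either route completes the induction.
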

\begin{corollary}\label{coro:onearrow}
If $R$ is mutation equivalent to $Q$, then any two mutable vertices of $R$ have at most one arrow connecting them.
\end{corollary}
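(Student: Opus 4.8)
The plan is to read the statement off Lemma~\ref{BV} by producing a forbidden short cycle, so almost no new work is needed. Since $R$ is mutation equivalent to $Q$, Lemma~\ref{BV} applies to $R$ directly; in the applications of this section, where $R$ carries frozen vertices, I would first restrict to the mutable part $Q_0 \setminus F = [n]$ and observe that this restriction is again mutation equivalent to $Q$. This last point holds because matrix mutation at $k \in [n]$ updates each entry $b_{ij}$ with $i,j \in [n]$ using only the entries $b_{ik}$ and $b_{kj}$, whose second or first index lies in $[n]$; thus the mutable--mutable block $(b_{ij})_{i,j\in[n]}$ evolves exactly as the exchange matrix of $Q$ would, independently of the frozen columns. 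Hence in either reading Lemma~\ref{BV} governs the mutable vertices of $R$.

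Next I would argue by contradiction. Suppose two mutable vertices $u$ and $v$ were joined by at least two arrows. Because $R$ is a quiver, it contains no $2$-cycles, so every arrow between $u$ and $v$ must point in a common direction, say $u \to v$, with multiplicity at least two. Passing to the underlying graph, these parallel arrows become at least two parallel edges joining $u$ and $v$, and any two of them constitute a non-trivial cycle of length $2$. This directly contradicts condition \textit{i)} of Lemma~\ref{BV}, which requires every non-trivial cycle in the underlying graph to be oriented and of length $3$, whereas a length-$2$ cycle is neither. Therefore no two mutable vertices can be joined by more than one arrow, which is exactly the claim.

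I do not expect a genuine obstacle here, only two points that must be handled with care. The first is the reduction to a quiver without frozen vertices: one should confirm that restricting to the mutable part preserves mutation equivalence to $Q$, which follows from the shape of the mutation formula on the principal block noted above. The second is the convention that the underlying graph of a quiver is a multigraph, so that a double arrow genuinely contributes a non-trivial cycle of length $2$; under this standard convention condition \textit{i)} supplies the contradiction, and the corollary is immediate.
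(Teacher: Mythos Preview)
Your argument is correct and is exactly the intended derivation: the paper states the corollary without proof, as an immediate consequence of Lemma~\ref{BV}, and your reading of condition \textit{i)} (multiple parallel arrows yield a non-trivial $2$-cycle in the underlying multigraph) is the natural way to extract it. Your remarks on passing to the mutable part and on the multigraph convention are appropriate caveats, but no further work is needed.
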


\begin{proof}[Proof of Lemma~\ref{shareendpt}]
As $B$ is skew-symmetric, it is enough to assume that $b_{kj} = 1$ (using {Corollary \ref{coro:onearrow}}) with $k<j$. We can also assume that $\overrightarrow{c_j}(R)$ is positive where $R$ is the ice quiver whose exchange matrix is $B$. The proof when $\overrightarrow{c_j}(R)$ is negative is very similar.

Suppose that $\overrightarrow{c_k}(R) = \underline{\dim}(X_{i_1(k),i_2(k)})$ and $\overrightarrow{c_j}(R) = \underline{\dim}(X_{i_1(j), i_2(j)}).$ Then $$\overrightarrow{c_k}(\mu_jR) = \underline{\dim}(X_{i_1(k), i_2(k)}) + \underline{\dim}(X_{i_1(j), i_2(j)}).$$ In order to have $\overrightarrow{c_k}(\mu_jR) = \underline{\dim}(X_{s,t})$ for some $s, t \in [n]$ with $s < t$, we must have $i_1(k) = i_2(j)$ or $i_2(k) = i_1(j)$. Thus the oriented chords $\overrightarrow{c_k}(i_1(k), i_2(k))$ and $\overrightarrow{c_j}(i_1(j), i_2(j))$ of $\overrightarrow{d}_{C}$ share an endpoint.

Now suppose that $\overrightarrow{c_k}(R) = -\underline{\dim}(X_{i_1(k),i_2(k)})$ and $\overrightarrow{c_j}(R) = \underline{\dim}(X_{i_1(j), i_2(j)}).$ Then we have $$\overrightarrow{c_j}(\mu_kR) = -\underline{\dim}(X_{i_1(k), i_2(k)}) + \underline{\dim}(X_{i_1(j), i_2(j)}).$$ In order to have $\overrightarrow{c_j}(\mu_kR) = \pm \underline{\dim}(X_{s,t})$ for some $s, t \in [n]$ with $s < t$, we must have $i_1(k) = i_1(j)$ so that 
$$\begin{array}{rcl}\overrightarrow{c_j}(\mu_kR) & = &\left\{\begin{array}{r c l c c c c c} \underline{\dim}(X_{i_2(k),i_2(j)}) & : & \text{if $i_2(k) < i_2(j)$}\\ -\underline{\dim}(X_{i_2(j),i_2(k)}) & : & \text{if $i_2(j) < i_2(k)$}
\end{array}\right.
\end{array}$$
or $i_2(k) = i_2(j)$ so that 
$$\begin{array}{rcl}\overrightarrow{c_j}(\mu_kR) & = &\left\{\begin{array}{r c l c c c c c} -\underline{\dim}(X_{i_1(k),i_1(j)}) & : & \text{if $i_1(k) < i_1(j)$}\\ \underline{\dim}(X_{i_1(j),i_1(k)}) & : & \text{if $i_1(j) < i_1(k)$.}
\end{array}\right.
\end{array}$$
In either case, the oriented chords $\overrightarrow{c_k}(i_1(k), i_2(k))$ and $\overrightarrow{c_j}(i_1(j), i_2(j))$ of $\overrightarrow{d}_{C}$ share an endpoint.
\end{proof}

We have found that oriented diagrams arising from $\bc$-matrices via Lemma \ref{cmatdiag} have certain chords which share an endpoint via Lemma \ref{shareendpt}.  We now keep track of how oriented diagrams are affected when their corresponding $\bc$-matrices are mutated (via the mutation of exchange matrices in Section \ref{subsec:quivers}).  Just as for quivers, mutation of diagrams is a local property--that is, the chords which do not share an endpoint with the mutating chord are not affected by mutation.  The way that intersecting chords are affected depends on exactly which endpoint they share.  In the theorem below, we give one of our main results--a mutation formula for oriented diagrams arising from $\bc$-matrices.

\begin{theorem}\label{mainmutation}
Let $R \in EG(\widehat{Q})$, let $B = (b_{ij})$ denote its exchange matrix with $\bc$-matrix $C$, and let $\overrightarrow{d}_{C}$ denote the oriented diagram associated to $C$. Then for any $k \in [n]$, $\overrightarrow{d}_{\mu_kC}$ is obtained from $\overrightarrow{d}_C$ by the following {two-step} process \\
$\begin{array}{rlllcc}
i) & \text{if $j \in [n]$, $b_{kj} \neq 0$, and $\text{sgn}(b_{kj}) \neq \text{sgn}(\overrightarrow{c_k}(R))$, replace $\overrightarrow{c_j}(i_1(j), i_2(j))$ with $\overrightarrow{c_{j}}((i_1(j))^\prime, (i_2(j))^\prime)$}\\
& \text{where}\\
& \begin{array}{rl}a) & (i_1(j))^\prime = i_2(k), \ (i_2(j))^\prime = i_2(j) \ \text{ if } \ i_1(k) = i_1(j),\\
b) & (i_1(j))^\prime = i_1(j), \ (i_2(j))^\prime = i_1(k) \ \text{ if } \ i_2(k) = i_2(j), \\
c) & (i_1(j))^\prime = i_1(k), \ (i_2(j))^\prime = i_2(j) \ \text{ if } \ i_2(k) = i_1(j), \\
d) & (i_1(j))^\prime = i_1(j), \ (i_2(j))^\prime = i_2(k) \ \text{ if } \ i_1(k) = i_2(j).
\end{array}\\
ii) & \text{replace $\overrightarrow{c_k}(i_1(k), i_2(k))$ with $\overrightarrow{c_k}(i_2(k), i_1(k))$.}
\end{array}$
\end{theorem}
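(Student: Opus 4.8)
The plan is to reduce the statement to a computation with the matrix-mutation formula of Section~\ref{subsec:quivers}, carried out at the level of $\bc$-vectors, and then to translate the resulting vector identities into chords via Lemma~\ref{cmatdiag}. Recall that the $\bc$-vectors of $C$ are the rows $(b_{ij})_{j\in[n+1,2n]}$ of $B$. Mutating $B$ at $k$ negates the $k$-th row, so $\overrightarrow{c_k}(\mu_k R) = -\overrightarrow{c_k}(R)$; by Lemma~\ref{cmatdiag} negating a $\bc$-vector is exactly reversing the orientation of its chord, which is precisely step $ii)$. For a row $j\neq k$ and a frozen column $c$, the mutation formula reads $b'_{jc} = b_{jc} + \tfrac{|b_{jk}|b_{kc} + b_{jk}|b_{kc}|}{2}$. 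Here all entries $b_{kc}$ with $c\in[n+1,2n]$ share the sign $\text{sgn}(\overrightarrow{c_k}(R))$ by sign-coherence, and $|b_{jk}|\le 1$ by Corollary~\ref{coro:onearrow}. Substituting $|b_{kc}| = \text{sgn}(\overrightarrow{c_k}(R))\, b_{kc}$, the correction factor collapses to $1$ when $\text{sgn}(b_{jk}) = \text{sgn}(\overrightarrow{c_k}(R))$ and to $0$ otherwise. Since $b_{jk} = -b_{kj}$, this says precisely that $\overrightarrow{c_j}$ is unchanged unless $b_{kj}\neq 0$ and $\text{sgn}(b_{kj})\neq \text{sgn}(\overrightarrow{c_k}(R))$, in which case
$$\overrightarrow{c_j}(\mu_k R) = \overrightarrow{c_j}(R) + \overrightarrow{c_k}(R).$$
This identifies exactly the chords affected in step $i)$ and reduces everything to interpreting this vector sum as an oriented chord.

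Next I would invoke Lemma~\ref{shareendpt}: because $b_{kj}\neq 0$, the chords $\overrightarrow{c_k}$ and $\overrightarrow{c_j}$ share an endpoint, and two distinct chords can share at most one. Writing $\overrightarrow{c_k}(R) = \pm\underline{\dim}(X_{i_1(k),i_2(k)})$ and $\overrightarrow{c_j}(R) = \pm\underline{\dim}(X_{i_1(j),i_2(j)})$ with $i_1<i_2$, the vector $\underline{\dim}(X_{i,j})$ is the $0/1$ vector supported on $\{i+1,\dots,j\}$, i.e. the indicator of the interval $(i,j]$. The shared endpoint occurs in exactly the four ways listed as $a)$--$d)$, namely $i_1(k)=i_1(j)$, $i_2(k)=i_2(j)$, $i_2(k)=i_1(j)$, or $i_1(k)=i_2(j)$, and these are mutually exclusive and exhaustive. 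In each case I would compute $\overrightarrow{c_j}(R)+\overrightarrow{c_k}(R)$ as a signed sum of two interval indicators. Sign-coherence forces this sum to again be $\pm$ an interval indicator, and that constraint pins everything down: when the two chords meet at a ``middle'' endpoint (cases $c),d)$) the two $\bc$-vectors must have equal sign and the intervals concatenate, while when they meet at a common smaller or larger endpoint (cases $a),b)$) the signs must be opposite and the nested intervals subtract. In all four cases the result joins the non-shared endpoint of $\overrightarrow{c_j}$ to the non-shared endpoint of $\overrightarrow{c_k}$, exactly as recorded in $a)$--$d)$.

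Finally, because the theorem records \emph{oriented} chords, I would read the orientation of the new chord off the sign of the resulting indicator via Lemma~\ref{cmatdiag} and match it against the order of the arguments in $\overrightarrow{c_j}((i_1(j))',(i_2(j))')$; the endpoint bookkeeping mirrors the two scenarios already treated in the proof of Lemma~\ref{shareendpt} and can be reused almost verbatim. I expect the orientation tracking to be the main obstacle. For a fixed geometric configuration both opposite-sign labelings are a priori conceivable, and they yield \emph{reversed} orientations of the new chord; it is the sign relation $\text{sgn}(b_{kj})\neq\text{sgn}(\overrightarrow{c_k}(R))$—which decides whether $\mu_k$ actually disturbs $\overrightarrow{c_j}$—that selects the labeling for which the stated orientation holds. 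Confirming that the arrow $b_{kj}$ is compatible with the configuration in precisely this way, so that the orientations in $a)$--$d)$ are forced rather than merely possible, is the delicate point. Once this is done, steps $i)$ and $ii)$ are independent—chords sharing no endpoint with $\overrightarrow{c_k}$ are untouched, and only $\overrightarrow{c_k}$ itself is reoriented—so performing both produces $\overrightarrow{d}_{\mu_k C}$, completing the argument.
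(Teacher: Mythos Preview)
Your plan is correct and follows essentially the same route as the paper: reduce to the identity $\overrightarrow{c_j}(\mu_k R)=\overrightarrow{c_j}(R)+\overrightarrow{c_k}(R)$ under the stated sign condition, invoke Lemma~\ref{shareendpt} for the shared endpoint, and run a case split governed by sign-coherence of the resulting vector. The paper normalizes to $b_{kj}=1$ with $\overrightarrow{c_k}$ negative and then branches on $\text{sgn}(\overrightarrow{c_j})$, which is your four-endpoint analysis run in the other direction; your ``delicate point'' dissolves once you observe that sign-coherence of $\mu_kC$ by itself forces the orientations in $a)$--$d)$, with no further compatibility condition on $b_{kj}$ required.
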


\begin{proof}
As $B$ is skew-symmetric, it is enough to assume that $b_{kj} = 1$ (using {Corollary \ref{coro:onearrow}}) with $k<j$. Since we need to prove that $\overrightarrow{d}_{\mu_kC}$ is obtained from $\overrightarrow{d}_C$ using the operation described in $i)$, we further assume that $\overrightarrow{c_k}(R)$ is negative where $R$ is the ice quiver whose exchange matrix is $B$. Without loss of generality, we assume that $\overrightarrow{c_k}(R) = -\underline{\dim}(X_{i_1(k),i_2(k)})$ and $\overrightarrow{c_j}(R) = \text{sgn}(\overrightarrow{c_j}(R))\underline{\dim}(X_{i_1(j),i_2(j)})$ so $i_1(k)<i_2(k)$ and $i_1(j) < i_2(j)$. Note that by Lemma~\ref{shareendpt}, we know that the oriented chord determined by $\overrightarrow{c_k}(R)$, namely $\overrightarrow{c_k}(i_2(k), i_1(k))$, and the oriented chord determined by $\overrightarrow{c_j}(R)$ share an endpoint in $\overrightarrow{d}_C$. We have 
$$\begin{array}{cclccc}
\overrightarrow{c_j}(\mu_kR) & = & \text{sgn}(\overrightarrow{c_j}(R))\overrightarrow{c_j}(R) - \overrightarrow{c_k}(R) \\
& = & \text{sgn}(\overrightarrow{c_j}(R))\underline{\dim}(X_{i_1(j),i_2(j)}) - \underline{\dim}(X_{i_1(k),i_2(k)}) \\
& = & \left\{\begin{array}{r c l c c c c c} \underline{\dim}(X_{i_1(j),i_2(j)}) - \underline{\dim}(X_{i_1(k),i_2(k)}) & : & \text{$\overrightarrow{c_j}(R)$ is positive}\\ -\underline{\dim}(X_{i_1(j),i_2(j)}) - \underline{\dim}(X_{i_1(k),i_2(k)}) & : & \text{$\overrightarrow{c_j}(R)$ is negative.}
\end{array}\right.
\end{array}$$

Assume $\overrightarrow{c_j}(R)$ is positive. Thus the oriented chord determined by $\overrightarrow{c_j}(R)$ is $\overrightarrow{c_j}(i_1(j),i_2(j))$. In order to have $\overrightarrow{c_j}(\mu_kR) = \pm\underline{\dim}(X_{s,t})$ for some $s,t \in [n]$ with $s<t$, we must have either $i_1(j) = i_1(k)$ or $i_2(j) = i_2(k)$. Now observe that we have
$$\begin{array}{cclccc}
\overrightarrow{c_j}(\mu_kR) & = & \left\{\begin{array}{r c l c c c c c}
-\underline{\dim}(X_{i_2(j),i_2(k)}) & : & \text{if $i_1(j) = i_1(k)$ and $i_2(j) < i_2(k)$}\\
\underline{\dim}(X_{i_2(k),i_2(j)}) & : & \text{if $i_1(j) = i_1(k)$ and $i_2(k) < i_2(j)$}\\
\underline{\dim}(X_{i_1(j),i_1(k)}) & : & \text{if $i_2(j) = i_2(k)$ and $i_1(j) < i_1(k)$}\\
-\underline{\dim}(X_{i_1(k),i_1(j)}) & : & \text{if $i_2(j) = i_2(k)$ and $i_1(k) < i_1(j)$.}
\end{array}\right.
\end{array}$$
We see that the oriented chord determined by $\overrightarrow{c_j}(\mu_kR)$ in $\overrightarrow{d}_{\mu_kC}$ is $\overrightarrow{c_j}(i_2(k), i_2(j))$ if $i_1(j) = i_1(k)$ or $\overrightarrow{c_j}(i_1(j),i_1(k))$ if $i_2(j) = i_2(k)$ as desired.

Now assume that $\overrightarrow{c_j}(R)$ is negative. Then the oriented chord determined by $\overrightarrow{c_j}(R)$ is $\overrightarrow{c_j}(i_2(j),i_1(j))$. In order to have $\overrightarrow{c_j}(\mu_kR) = \pm\underline{\dim}(X_{s,t})$ for some $s,t\in [n]$ with $s<t$, we must have either $i_1(j) = i_2(k)$ or $i_2(j) = i_1(k)$. Now observe that we have
$$\begin{array}{cclccc}
\overrightarrow{c_j}(\mu_kR) & = & \left\{\begin{array}{r c l c c c c c}
-\underline{\dim}(X_{i_1(k),i_2(j)}) & : & \text{if $i_1(j) = i_2(k)$}\\
-\underline{\dim}(X_{i_1(j),i_2(k)}) & : & \text{if $i_2(j) = i_1(k)$}
\end{array}\right.
\end{array}$$
We see that the oriented chord determined by $\overrightarrow{c_j}(\mu_kR)$ in $\overrightarrow{d}_{\mu_kC}$ is $\overrightarrow{c_j}(i_2(j), i_1(k))$ if $i_1(j) = i_2(k)$ or $\overrightarrow{c_j}(i_2(k),i_1(j))$ if $i_2(j) = i_1(k)$ as desired.

It is clear that to obtain $\overrightarrow{d}_{\mu_kC}$ from $\overrightarrow{d}_{C}$ one replaces $\overrightarrow{c_k}(i_2(k), i_1(k))$ with $\overrightarrow{c_k}(i_1(k), i_2(k))$ as $\overrightarrow{c_k}(\mu_kR) = -\overrightarrow{c_k}(R) = \underline{\dim}(X_{i_1(k),i_2(k)}).$ Thus $\overrightarrow{d}_{\mu_kC}$ is obtained from $\overrightarrow{d}_C$ by the process described in the statement of the theorem.
\end{proof}

\begin{remark}
Our notion of mutation expands on Araya's notion of mutation of exceptional pairs, which is an operation on unoriented chord diagrams (see \cite[Lemma 5.2]{a13}).
\end{remark}
\begin{remark}
The process of oriented diagram mutation requires the information of the mutable part of the ice quiver $R$.
\end{remark}

\begin{example}We conclude this section with an example of mutation of oriented chord diagrams associated to $\bc$-matrices in type $\AAA_3$. Letting $R = \widehat{Q}$ with exchange matrix $B$, we first apply $\mu_1$. We use both $i)$ and $ii)$ in Theorem \ref{mainmutation} to obtain $\overrightarrow{d}_{\mu_1C}$. Note that the chord $\overrightarrow{c}(2,3)$ corresponding to {$\overrightarrow{c_3}(R)$} was not affected. Next, we apply $\mu_3$ to $B^\prime = \mu_1B$. Since $\text{sgn}(b^\prime_{32})= 1 = \text{sgn}(\overrightarrow{c_3}(\mu_1R))$, we only use $ii)$ from Theorem~\ref{mainmutation} to obtain $\overrightarrow{d}_{\mu_3\mu_1C}$.  Notice that the chord $\overrightarrow{c}(0,1)$ corresponding to {$\overrightarrow{c_1}(\mu_1 R)$} was not affected.
\[\begin{xy} 0;<1pt,0pt>:<0pt,-1pt>:: 
(0,10) *+{1} ="0",
(30,10) *+{2} ="1",
(60,10) *+{3} ="2",
(0,-20) *+{\textcolor{blue}{\text{$4$}}} ="5",
(30,-20) *+{\textcolor{blue}{\text{$5$}}} ="6",
(60,-20) *+{\textcolor{blue}{\text{$6$}}} ="7",
"1", {\ar"0"},
"0", {\ar"5"},
"2", {\ar"1"},
"1", {\ar"6"},
"2", {\ar"7"},
\end{xy} \stackrel{\mu_1}{\longmapsto}\begin{xy} 0;<1pt,0pt>:<0pt,-1pt>:: 
(0,10) *+{1} ="0",
(30,10) *+{2} ="1",
(60,10) *+{3} ="2",
(0,-20) *+{\textcolor{blue}{\text{$4$}}} ="5",
(30,-20) *+{\textcolor{blue}{\text{$5$}}} ="6",
(60,-20) *+{\textcolor{blue}{\text{$6$}}} ="7",
"0", {\ar"1"},
"5", {\ar"0"},
"2", {\ar"1"},
"1", {\ar"6"},
"2", {\ar"7"},
"1", {\ar"5"},
\end{xy} \stackrel{\mu_3}{\longmapsto} \begin{xy} 0;<1pt,0pt>:<0pt,-1pt>:: 
(0,10) *+{1} ="0",
(30,10) *+{2} ="1",
(60,10) *+{3} ="2",
(0,-20) *+{\textcolor{blue}{\text{$4$}}} ="5",
(30,-20) *+{\textcolor{blue}{\text{$5$}}} ="6",
(60,-20) *+{\textcolor{blue}{\text{$6$}}} ="7",
"0", {\ar"1"},
"5", {\ar"0"},
"1", {\ar"2"},
"1", {\ar"6"},
"7", {\ar"2"},
"1", {\ar"5"},
\end{xy}\]
\begin{center}\raisebox{-0.5\height}{\includegraphics[scale = .5]{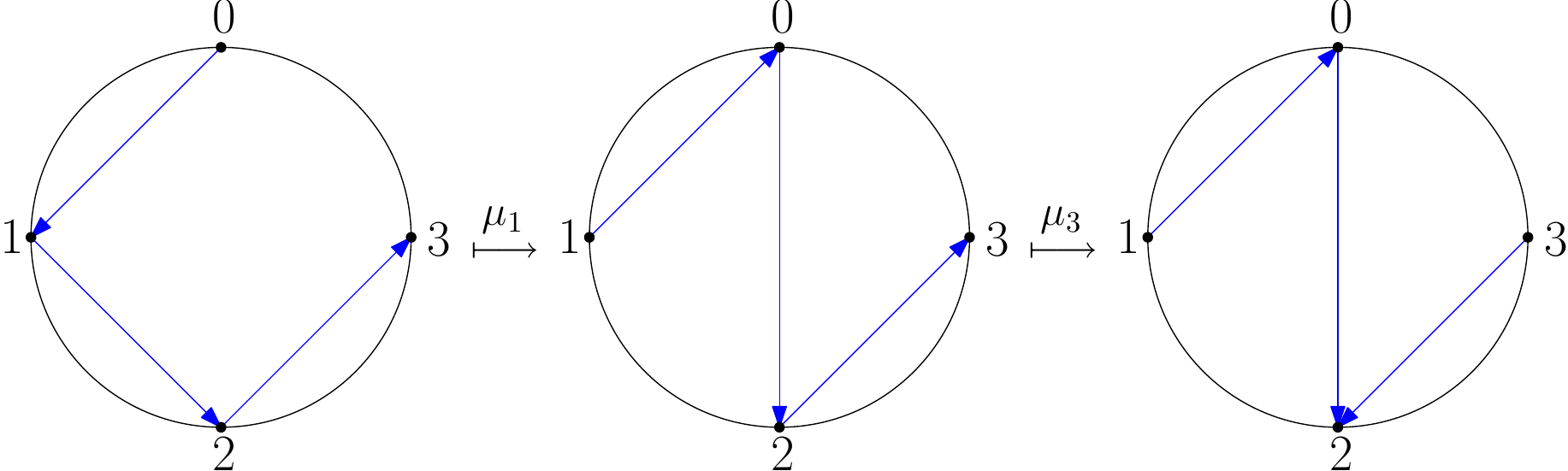}}\end{center}\end{example}

\section{Reachable CECs and classification of $\bc$-matrices}
\label{sec:cmats}
The bijection in Theorem \ref{a13} associates to every CEC $\overline{\xi}$ a corresponding chord diagram $d$.  We note that in $\AAA_3$, oriented versions of the following three diagrams are not obtainable by diagram mutation.
\begin{center}
\includegraphics[scale = .4]{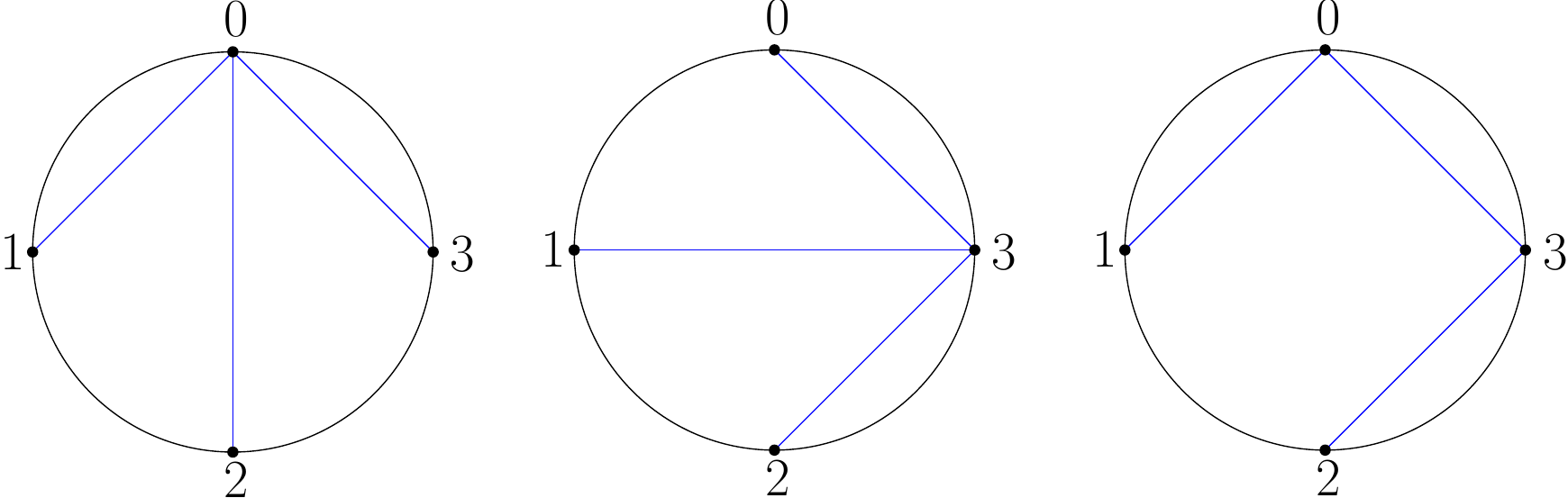}
\end{center}
This behavior occurs in type $\AAA_n$ for every $n \geq 3$.  In this section, we classify exactly which CECs appear by classifying diagrams instead.  Along the way, we obtain a diagrammatic classification of $\bc$-matrices.

A CEC $\overline{\xi}$ is said to be \textbf{reachable} if there exists an orientation of the diagram $d$ of $\overline{\xi}$ (see Theorem \ref{a13}), denoted $\overrightarrow{d}$, such that $\overrightarrow{d} = \overrightarrow{d}_C$ for some $C \in \textbf{c}$-mat$(Q)$.  Let $\overrightarrow{d} \in \overrightarrow{\mathcal{D}}$ and let $k \in [0,n]$ denote a marked point on the disk. Define the following subsets of the vertices of the disk
$$\begin{array}{ccccccc}
\text{out}(\overrightarrow{d},k) & := & \{i \in [0,n]: \overrightarrow{c}(k,i) \in \overrightarrow{d} \} \nonumber & \text{ and } & \text{in}(\overrightarrow{d}, k) & := & \{j \in [0,n]: \overrightarrow{c}(j,k) \in \overrightarrow{d}\}. \nonumber
\end{array}$$

\begin{lemma}\label{orientconfigs}
Let $C \in \textbf{c}$-mat$(Q)$, let $k \in [0,n]$ be any marked point of the disk, and let $I:=\{i_1< i_2 < \cdots < i_m\} \subset [0,n]$ denote the vertices of the disk that are connected to $k$ in $\overrightarrow{d}_C$. Then $\overrightarrow{d}_C$ is \textbf{weakly separated} at $k$ in the sense that there exists $s,t \in [m]$ such that 
$\text{in}(\overrightarrow{d}_C,k) = \{i_s < i_{s+1} < \cdots < i_{t-1} < i_t\}$  and $\text{out}(\overrightarrow{d}_C,k) = I\backslash\text{in}(\overrightarrow{d}_C,k)$ or one of $\text{out}(\overrightarrow{d}_C,k)$  or $\text{in}(\overrightarrow{d}_C,k)$ is empty.
\end{lemma}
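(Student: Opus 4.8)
The plan is to reduce the global statement to a purely local analysis at the marked point $k$, driven by the sign-coherence and ordering furnished by Theorem~\ref{st}. Writing $I = I^- \sqcup I^+$ with $I^- = \{i \in I : i < k\}$ and $I^+ = \{i \in I : i > k\}$, I would first record the dictionary between the orientation of a chord at $k$ and the sign of the corresponding $\bc$-vector, read off from the rule in Lemma~\ref{cmatdiag}: a left neighbor $i \in I^-$ (coming from $X_{i,k}$) lies in $\text{in}(\overrightarrow{d}_C,k)$ exactly when its $\bc$-vector is positive, while a right neighbor $i \in I^+$ (coming from $X_{k,i}$) lies in $\text{in}(\overrightarrow{d}_C,k)$ exactly when its $\bc$-vector is negative. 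Thus proving weak separation amounts to controlling the signs of the representations attached to the chords at $k$, which is where Theorem~\ref{st} enters: the $\bc$-vectors of $C$ give representations that can be ordered into a CES in which all negatives precede all positives and, crucially, $\Hom_{kQ}(V_i, V_j) = 0$ whenever $\overrightarrow{c_i}$ and $\overrightarrow{c_j}$ have the same sign.

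Next I would show that $k$ has at most two left neighbors and at most two right neighbors, and pin down which neighbor is ``in'' when there are two. For two left neighbors $a < b$ the representations $X_{a,k}, X_{b,k}$ share the right endpoint $k$, and the shared-right-endpoint computation behind Lemma~\ref{last} (parallel to the one in the proof of Lemma~\ref{first}) gives $\Hom_{kQ}(X_{a,k}, X_{b,k}) \neq 0$; by the contrapositive of the same-sign vanishing in Theorem~\ref{st} they have opposite signs, so there can be at most two such neighbors. This opposite-sign conclusion in particular guarantees a positive $\bc$-vector, so the threshold clause of Theorem~\ref{st} is in force; and since the nonzero $\Hom$ forces $X_{a,k}$ to precede $X_{b,k}$ in the CES, the earlier one $X_{a,k}$ must be negative and $X_{b,k}$ positive. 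Via the dictionary this says $a \in \text{out}$ and $b \in \text{in}$, i.e.\ the larger left neighbor is the ``in'' one. The symmetric argument for two right neighbors $c < d$, using instead $\Hom_{kQ}(X_{k,c}, X_{k,d}) \neq 0$ from the proof of Lemma~\ref{first}, shows the smaller right neighbor $c$ is ``in'' and $d$ is ``out''. (Left–right pairs impose no sign constraint, since by Lemma~\ref{middle} both $\Hom$'s between $X_{i,k}$ and $X_{k,j}$ vanish, which is why a lone neighbor on a side may be oriented either way.)

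Finally I would assemble these local facts into contiguity by a short case analysis on $(|I^-|, |I^+|) \in \{0,1,2\}^2$. Listing $I$ in increasing order places $I^-$ before $I^+$; the only way $\text{in}(\overrightarrow{d}_C,k)$ could fail to be an interval is for some ``out'' neighbor to lie strictly between two ``in'' neighbors, and the two observations above rule this out: among two left neighbors the ``out'' one is the smaller, and among two right neighbors the ``out'' one is the larger, so every ``out'' neighbor lies either below all ``in'' neighbors or above all of them. Hence $\text{in}(\overrightarrow{d}_C,k)$ is either empty or a contiguous block $\{i_s, \ldots, i_t\}$ with $\text{out}(\overrightarrow{d}_C,k)$ its complement, which is exactly weak separation.

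I expect the main obstacle to be the identification step---determining which neighbor of a same-side pair is ``in''---rather than the bookkeeping. Sign-coherence alone only yields that two same-side neighbors have opposite signs, which would still permit the non-contiguous pattern in which the two extreme neighbors $i_1, i_m$ are ``in'' and the interior ones ``out''. Ruling this out genuinely requires the ordering half of Theorem~\ref{st} (negatives before positives) combined with the directed $\Hom$-nonvanishing from Lemmas~\ref{first} and~\ref{last}; reconciling the opposite orientation conventions for left versus right chords is the delicate point that makes the two interior neighbors, one from each side, the adjacent ``in'' pair.
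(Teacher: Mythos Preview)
Your argument is correct and reaches the conclusion, but it follows a genuinely different route from the paper's proof. The paper never bounds the number of neighbors at $k$; instead it passes through Theorem~\ref{firstmainresult}: from the CES supplied by Theorem~\ref{st} one gets a good labeling of $|\overrightarrow{d}_C|$, and goodness forces the labels of the chords at $k$, read counterclockwise, to be monotone. Since the CES places all negatives before all positives, there is a single threshold in this cyclic order separating negative from positive chords, and a direct inspection of an ``out'' sandwiched between two ``in'' neighbors shows it would violate that threshold. Your approach sidesteps Theorem~\ref{firstmainresult} entirely: you use the $\Hom$-vanishing clause of Theorem~\ref{st} to force any two same-side neighbors to carry opposite signs (hence at most two on each side), then invoke the exceptional-pair Lemmas~\ref{first} and~\ref{last} together with the ``negatives precede positives'' clause to determine which of a same-side pair is ``in'', and finish with a finite case check on $(|I^-|,|I^+|)\in\{0,1,2\}^2$. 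What you gain is a more elementary argument that avoids the labeling bijection and yields, as a byproduct, the sharper fact $|I^-|,|I^+|\le 2$ (indeed your constraints also rule out $(|I^-|,|I^+|)=(2,2)$, though you do not need this). What the paper's route gains is uniformity: one monotonicity statement replaces the case analysis, and it reuses machinery already in place. One small citation quibble: the vanishing of both $\Hom$'s for a left--right pair is really the disjoint-support observation, not quite the content of Lemma~\ref{middle}, though that lemma's proof contains one of the two vanishings.
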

\begin{proof}
Assume that both $\text{out}(\overrightarrow{d}_C,k)$ and $\text{in}(\overrightarrow{d}_C,k)$ are nonempty. Let $k \in [0,n]$ be a marked point of the disk. Observe that $$\text{in}(\overrightarrow{d}_C,k) = \underbracket{\{i \in \text{in}(\overrightarrow{d}_C,k): i < k\}}_{=:B_1} \sqcup \underbracket{\{i \in \text{in}(\overrightarrow{d}_C,k): i > k\}}_{=:B_2}.$$ The oriented chords of $\overrightarrow{d}_C$ that begin at elements of $\text{out}(\overrightarrow{d}_C,k)$ and $B_1$ correspond to positive $\textbf{c}$-vectors in $C$ while the oriented chords of that begin at elements of $B_2$ correspond to negative $\textbf{c}$-vectors in $C$. 

By Theorem~\ref{st}, there exists a positive integer $j$ and a complete exceptional sequence $$\xi_C = (E_n, E_{n-1}, \ldots, E_{n-j}, E_{n-j-1}, \ldots, E_1)$$ such that the \textbf{c}-vectors corresponding to each of $E_n, \ldots, E_{n-j}$ (resp. $E_{n-j-1}, \ldots, E_1$) are negative (resp. positive). Since both $\text{out}(\overrightarrow{d}_C,k)$ and $\text{in}(\overrightarrow{d}_C,k)$ are nonempty, we must have $j \in [n-2]$. Let $d(n) := \widetilde{\Phi}(\xi_C) \in \mathcal{D}(n)$ denote the labeled chord diagram corresponding to $\xi_C$. Note that $d(n)$ and $\overrightarrow{d}_C$ have the same underlying (unoriented, unlabeled) chord diagrams so we can form the labeled, oriented diagram $\overrightarrow{d}(n)_C$ by adding the edge labels of $d(n)$ to the chords of $\overrightarrow{d}_C$. By Theorem~\ref{firstmainresult}, the maximum length sequence $\{\ell_t\}_{t \in [m]}$ of edge labels obtained by reading counterclockwise around $k$ in the interior of the disk is decreasing. Thus there exists $t^\prime \in [m]$ such that all chords connected to $k$ with label $\ell_{t}$ for $t \le t^\prime$ correspond to negative \textbf{c}-vectors in $C$ and all chords connected to $k$ with label $\ell_t$ for $t \ge t^\prime + 1$ correspond to positive \textbf{c}-vectors in $C$. 

To see that $\text{in}(\overrightarrow{d}_C,k) = \{i_s < i_{s+1} < \cdots < i_{t-1} < i_t\}$ for some $s,t \in [m]$, it is enough to show that there does not exist $i_r \in \text{out}(\overrightarrow{d}_C,k)$ such that $i_{r-1}, i_{r+1} \in \text{in}(\overrightarrow{d}_C,k)$.  Suppose that such an $i_r$ exists. The following diagram appears as a subdiagram of $\overrightarrow{d}_C$ where $0\le i_{r-1} < i_r < i_{r+1} \le n.$ 

\begin{center}
\includegraphics[scale=1]{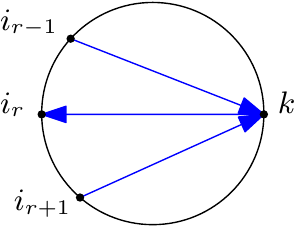}
\end{center}

\noindent However, this contradicts the existence of $t^\prime \in [m]$ such that all chords connected to $k$ with label $\ell_{t}$ for $t \le t^\prime$ correspond to negative \textbf{c}-vectors in $C$ and all chords connected to $k$ with label $\ell_t$ for $t \ge t^\prime + 1$ correspond to positive \textbf{c}-vectors in $C$. The result follows.\end{proof}

Lemma~\ref{orientconfigs} gives some slight restrictions on the orientation of chords in oriented diagrams arising from $\bc$-matrices.  The following theorem greatly reduces the number of possible configurations.

\begin{theorem}\label{reachcec}
A CEC $\overline{\xi}$ is reachable if and only if there exists an orientation of the chords of its associated diagram $d$, denoted $\overrightarrow{d}$, so that $\overrightarrow{d}$ is weakly separated at each marked point and $\overrightarrow{d}$ has no subdiagram of any of the following forms where we assume $0 \le i < j < t \le n$ and $0 \le s < i < j \le n$:
\[
{\includegraphics[scale = .8]{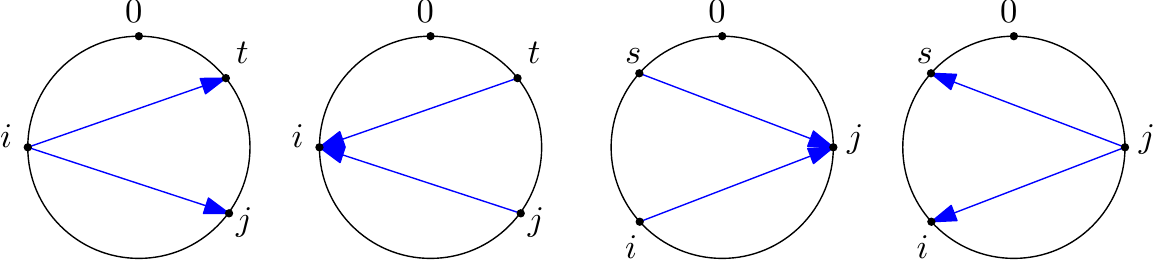}}
\]
\end{theorem}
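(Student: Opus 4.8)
The plan is to prove both implications by translating the geometric conditions on $\overrightarrow{d}$ into the representation-theoretic conditions of Theorem~\ref{st}, using the dictionary between orientations and signs (an oriented chord $\overrightarrow{c}(a,b)$ with $a<b$ encodes a positive $\bc$-vector $+\underline{\dim}(X_{a,b})$, while $\overrightarrow{c}(b,a)$ encodes a negative one) together with Lemmas~\ref{first},~\ref{middle},~\ref{last},~\ref{separatechords}, and~\ref{cross}, which record, for every pair of chords, the unique exceptional order forced by a shared endpoint and the exact nonvanishing $\Hom$ or $\Ext$ responsible for it. Throughout I will use that the underlying diagram of a CEC is a noncrossing forest (Theorem~\ref{a13}), so two distinct chords are either separated or share exactly one endpoint, and crossings never occur.

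For the forward implication, assume $\overline{\xi}$ is reachable, say $\overrightarrow{d}=\overrightarrow{d}_C$. Weak separation at every marked point is exactly Lemma~\ref{orientconfigs}. To rule out the forbidden subdiagrams I will treat each listed oriented configuration as a pair of signed chords sharing an endpoint and derive a contradiction with Theorem~\ref{st}: a shared left or right endpoint gives $\Hom\neq 0$ between the two representations (Lemmas~\ref{first},~\ref{last}), so if the orientations gave both $\bc$-vectors the same sign this would violate the same-sign $\Hom$-vanishing of Theorem~\ref{st}, whereas if they made the $\Hom$-source positive and the $\Hom$-target negative it would force a positive $\bc$-vector to precede a negative one, contradicting the negatives-before-positives order; a shared middle endpoint gives $\Ext\neq 0$ with a forced order (Lemma~\ref{middle}), and the ``both arrows pointing out of the shared vertex'' orientation is precisely the one that again forces a positive vector before a negative one. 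Matching each forbidden form to one of these three obstructions is a finite case check.

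For the converse, suppose $\overrightarrow{d}$ is weakly separated at every marked point and contains no forbidden subdiagram; read off the sign of each chord from its orientation and let $\{V_i\}$ be the corresponding indecomposables. I first verify the two hypotheses of the converse half of Theorem~\ref{st}. Same-sign $\Hom$-vanishing: two equally-signed chords are either separated, hence $\Hom$-orthogonal by Lemma~\ref{separatechords}, or share an endpoint; a shared left or right endpoint with equal signs is one of the forbidden subdiagrams, so it cannot occur, and a shared middle endpoint has disjoint support and is automatically $\Hom$-orthogonal. Existence of a negatives-first CES: using the $\Hom$- and $\Ext$-formulas (Theorem~\ref{arformula}) one checks that $\Hom(P,N)\neq 0$ or $\Ext(P,N)\neq 0$ for a positive $P$ and a negative $N$ can arise only through a shared endpoint, and in each such case the oriented configuration is exactly a forbidden subdiagram; hence no positive representation is forced to precede a negative one. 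Weak separation is what guarantees that, at each marked point with three or more incident chords, the in- and out-blocks are contiguous, so that these pairwise constraints assemble into a single consistent order rather than merely holding two at a time. Consequently the $V_i$ admit a linear order in which all negatives precede all positives and which is a CES, and the converse statement of Theorem~\ref{st} produces a $C\in\bc\text{-mat}(Q)$ whose $\bc$-vectors are these signed dimension vectors; by construction $\overrightarrow{d}_C=\overrightarrow{d}$, so $\overline{\xi}$ is reachable.

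The hard part will be the converse, and specifically the step assembling the local data into a global order. The forbidden-subdiagram and weak-separation hypotheses are visibly \emph{pairwise} (or local at a single vertex), whereas Theorem~\ref{st} demands one permutation that simultaneously splits the signs and yields a CES; the crux is therefore to show that these local conditions are exhaustive, i.e.\ that ruling out every bad shared-endpoint pattern and enforcing contiguity at each vertex leaves no higher, global obstruction to a negatives-first CES ordering. I expect this to follow from a linear-extension argument on the forcing order determined by the nonvanishing $\Hom$'s and $\Ext$'s, but it is the step most likely to require care in matching the precise list of forbidden forms.
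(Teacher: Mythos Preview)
Your strategy is the paper's: both implications go through Theorem~\ref{st}, with the forward direction using Lemma~\ref{orientconfigs} for weak separation and Remark~\ref{homnonvan} (a nonzero $\Hom$ in each forbidden pair) together with Theorem~\ref{st} to exclude the listed subdiagrams. For the converse you verify same-sign $\Hom$-vanishing exactly as the paper does and you correctly isolate the crux---producing a single CES in which all negatives precede all positives. The paper does not argue this by showing abstractly that ``no positive is forced before a negative'' and then invoking a linear extension; instead it constructs a good labeling of $|\overrightarrow{d}|$ in which every positive chord carries a smaller label than every negative one (equivalent, via Theorem~\ref{firstmainresult}, to the desired CES). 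The construction is an induction on $\#\text{pos}(\overrightarrow{d})$: one locates a positive chord $\overrightarrow{c}(j,k)$ that is counterclockwise-extremal among all chords at both of its endpoints, gives it the next label, deletes it, and repeats. Existence of such a chord is exactly where the hypotheses bite---at the target $k$, any counterclockwise neighbour would (by weak separation) be positive and hence form a forbidden pair, so none exists; at the source $j$, any counterclockwise obstruction is shown to be another positive chord $\overrightarrow{c}(k_1,j)$, to which one passes, and finiteness of $\overrightarrow{d}$ gives termination. This peel-off argument is the concrete content of the step you flagged as ``most likely to require care.''

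One caution on your case analysis: by Remark~\ref{homnonvan} the forbidden subdiagrams are all of the shared-left-endpoint or shared-right-endpoint type (Lemmas~\ref{first} and~\ref{last}); the Lemma~\ref{middle} configuration you describe does not appear on the list. In the paper's argument, middle-endpoint interactions are controlled implicitly through the inductive labeling procedure together with weak separation, not by membership in the forbidden list, so your assertion that every bad positive--negative pair ``is exactly a forbidden subdiagram'' needs adjusting for that case.
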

\begin{remark}\label{homnonvan}
Each of the oriented diagrams of the form appearing in the statement of the theorem satisfies the conditions of Lemma~\ref{first} and Lemma~\ref{last}. In each case, it is easy to see that the representations $E_1$ and $E_2$ corresponding to the two chords have the property that either $\Hom_{kQ}(E_1,E_2) \neq 0$ or $\Hom_{kQ}(E_2,E_1) \neq 0$.
\end{remark}

\begin{proof}[Proof of Theorem~\ref{reachcec}]
Let $\overline{\xi}$ be a reachable CEC. Let $\overrightarrow{d}_C$ denote the oriented diagram associated to $\overline{\xi}$. By Lemma~\ref{orientconfigs}, it is clear that $\overrightarrow{d}_C$ is weakly separated at each marked point. By Remark~\ref{homnonvan} and Theorem~\ref{st}, it is clear that $\overrightarrow{d}_C$ contains none of the oriented diagrams of the form shown above as subdiagrams.

Conversely, suppose $\overline{\xi}$ is a CEC with the properties in the statement of the theorem. Let $\overrightarrow{d} = \{\overrightarrow{c}(j_\ell, k_\ell)\}_{\ell \in [n]}$ denote the oriented diagram corresponding to $\overline{\xi}$ whose orientation of chords has the properties in the statement of the Theorem. Note that each oriented chord $\overrightarrow{c}(j_\ell, k_\ell)$ determines a vector $\pm\underline{\dim}(X_{i(\ell)_1, i(\ell)_2})$. We say a chord $\overrightarrow{c}(j_\ell, k_\ell)$ of $\overrightarrow{d}$ is \textbf{positive} (resp. \textbf{negative}) if $j_\ell < k_\ell$ (resp. $k_\ell < j_\ell$). Let $C:= \left(\pm\underline{\dim}(X_{i(\ell)_{1}, i(\ell)_{2}})\right)_{\ell \in [n]} \in \mathbb{Z}^{n\times n}$ be the matrix whose row vectors are signed dimension vectors of representations corresponding to chords in $\overrightarrow{d}$. The sign of any row $\pm\underline{\dim}(X_{i(\ell)_1, i(\ell)_2})$ of $C$ is positive (resp. negative) if the corresponding oriented chord $\overrightarrow{c}(j_\ell, k_\ell)$ is positive (resp. negative) in $\overrightarrow{d}$. We say a representation $X_{i(\ell)_1, i(\ell)_2}$ is \textbf{positive} (resp. \textbf{negative}) if the sign of $\pm\underline{\dim}(X_{i(\ell)_1, i(\ell)_2})$ is positive (resp. negative).

To show that $\overline{\xi}$ is reachable, it is enough to show that $C \in \textbf{c}$-mat$(Q)$ as this will imply that $\overrightarrow{d} = \overrightarrow{d}_C$. By assumption, there are no subdiagrams of $\overrightarrow{d}$ of the form in the statement of the theorem. Thus, by Remark~\ref{homnonvan}, there is no pair of representations $X_{i(\ell)_1, i(\ell)_2}$ and $X_{i(\ell^\prime)_1, i(\ell^\prime)_2}$ such that $\pm\underline{\dim}(X_{i(\ell)_1, i(\ell)_2})$ and $\pm\underline{\dim}(X_{i(\ell^\prime)_1, i(\ell^\prime)_2})$ have the same sign and satisfy $\Hom_{kQ}(X_{i(\ell)_1, i(\ell)_2}, X_{i(\ell^\prime)_1, i(\ell^\prime)_2}) \neq 0$ or $\Hom_{kQ}(X_{i(\ell^\prime)_1, i(\ell^\prime)_2}, X_{i(\ell)_1, i(\ell)_2}) \neq 0.$ 

Next, we show that there exists a CES $\xi_C$ using exactly the representations $\{X_{i(\ell)_1, i(\ell)_2}\}_{\ell \in [n]}$ so that all of the negative representations appear before all of the positive representations in $\xi_C$. The result will then follow from Theorem~\ref{st}. To show that such a CES $\xi_C$ exists, Theorem~\ref{firstmainresult} implies it is enough to define a good chord labeling of $|\overrightarrow{d}|$ (see Remark~\ref{natmap}) with the property that any positive chord of $\overrightarrow{d}$ has a smaller chord label in $|\overrightarrow{d}|$ than the label of any negative chord in $|\overrightarrow{d}|$. If $\overrightarrow{d}$ has either no positive chords or no negative chords, then clearly any good chord labeling of $|\overrightarrow{d}|$ has the desired property. Thus we can assume that $\overrightarrow{d}$ has both positive and negative chords.

Suppose $\overrightarrow{d}$ has both positive and negative chords. Clearly, $\overrightarrow{d} = \text{pos}(\overrightarrow{d}) \sqcup \text{neg}(\overrightarrow{d})$ where $\text{pos}(\overrightarrow{d})$ (resp. $\text{neg}(\overrightarrow{d})$) denotes the set of positive (resp. negative) chords of $\overrightarrow{d}$. To define the desired good labeling it is enough to show how to label the positive chords. Then any good labeling of the remaining negative chords will produce an element $|\overrightarrow{d}|(n) \in \mathcal{D}(n)$ whose underlying diagram is $|\overrightarrow{d}|$. 

We proceed by induction on the size of $\text{pos}(\overrightarrow{d})$. If $\#\text{pos}(\overrightarrow{d}) = 1$, then denote the unique positive chord of $\overrightarrow{d}$ by $\overrightarrow{c}(j(1),k(1))$. By uniqueness and the fact that $\overrightarrow{d}$ is weakly separated at $j(1)$ and $k(1)$, the chord $\overrightarrow{c}(j(1),k(1)) \in \overrightarrow{d}$ is counterclockwise from all other chords connected to $j(1)$ and all other chords connected to $k(1)$. Label $c(j(1),k(1)) \in |\overrightarrow{d}|$ by 1.

Now suppose that $\#\text{pos}(\overrightarrow{d}) = k$ and that a desired labeling of the positive chords exists for any oriented diagrams with fewer positive chords. Suppose there exists $\overrightarrow{c}(j,k) \in \text{pos}(\overrightarrow{d})$ such that $\deg(j) = 1$ where $$\deg(j):= \#\{\text{chords connected to $j$ in $\overrightarrow{d}$}\}.$$ In this case, label $c(j,k) \in |\overrightarrow{d}|$ by 1. Now suppose there does not exist $\overrightarrow{c}(j,k) \in \text{pos}(\overrightarrow{d})$ where $\deg(j) = 1$. Then any chord $\overrightarrow{c}(j,k) \in \text{pos}(\overrightarrow{d})$ has $\deg(j) \ge 2.$

We claim that there exists $\overrightarrow{c}(j,k) \in \text{pos}(\overrightarrow{d})$ that is counterclockwise from all chords connected to $j$ and $k$. Suppose that no such oriented chord exists. Then any $\overrightarrow{c}(j,k) \in \text{pos}(\overrightarrow{d})$ is clockwise from an oriented chord connected to $j$ or one connected to $k$. Any oriented chord of $\overrightarrow{d}$ connected to $k$ that is counterclockwise from $\overrightarrow{c}(j,k)$ must be positive by the weakly separated property. The existence of such a chord means that $\overrightarrow{d}$ has a subdiagram of one of the forbidden forms. Thus no such oriented chord of $\overrightarrow{d}$ connected to $k$ exists.

Now let $\overrightarrow{c}(j,k) \in \text{pos}(\overrightarrow{d})$. If $\overrightarrow{c}(j,k)$ is counterclockwise from any chord connected to $j$, label $c(j,k) \in |\overrightarrow{d}|$ by 1.  If not, then $\overrightarrow{c}(j,k)$ is clockwise from an oriented chord of $\overrightarrow{d}$ that is connected to $j$. By the weakly separated property, the chord must be positive. Since $\overrightarrow{d}$ has no subdiagrams of the form appearing in the statement of the theorem, such an oriented chord must be of the form $\overrightarrow{c}(k_1,j)$. If $\overrightarrow{c}(k_1,j)$ is counterclockwise from any chord connected to $k_1$, label $c(k_1,j) \in |\overrightarrow{d}|$ by 1. Since there are only finitely many oriented chords of $\overrightarrow{d}$, this process will terminate. Thus we can find $\overrightarrow{c}(j,k) \in \overrightarrow{d}$ that is counterclockwise from all oriented chords of $\overrightarrow{d}$ connected to $j$ and $k$. Label $c(j,k) \in |\overrightarrow{d}|$ by 1.

Thus by induction we obtain a labeling of the positive chords with the desired property. The result follows.\end{proof}

The corollary below gives a complete classification of $\bc$-matrices in terms of oriented chord diagrams. This shows exactly which oriented diagrams are reachable via diagram mutation.

\begin{corollary}\label{c-matClassif}
The injective image of the map $\textbf{c}\text{-mat}(Q)\to \overrightarrow{\mathcal{D}}$ defined in Lemma \ref{cmatdiag} consists of exactly the oriented diagrams that are weakly separated at each marked point and which contain none of the configurations of Theorem \ref{reachcec} as subdiagrams.
\end{corollary}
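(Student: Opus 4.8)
The plan is to deduce this corollary almost immediately from Theorem~\ref{reachcec} together with the definitions of reachability and of the map in Lemma~\ref{cmatdiag}. The key observation is that the notion of a CEC being \emph{reachable} was defined precisely so that $\overline{\xi}$ is reachable if and only if some orientation $\overrightarrow{d}$ of its diagram $d$ equals $\overrightarrow{d}_C$ for some $C \in \textbf{c}\text{-mat}(Q)$. Thus the injective image of the map $\textbf{c}\text{-mat}(Q) \to \overrightarrow{\mathcal{D}}$ is exactly the set $\{\overrightarrow{d}_C : C \in \textbf{c}\text{-mat}(Q)\}$, and I want to show this set coincides with the collection of oriented diagrams that are weakly separated at each marked point and avoid the forbidden configurations of Theorem~\ref{reachcec}.

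First I would argue the forward containment: given $C \in \textbf{c}\text{-mat}(Q)$, the oriented diagram $\overrightarrow{d}_C$ is weakly separated at every marked point by Lemma~\ref{orientconfigs}, and it contains none of the forbidden configurations by Remark~\ref{homnonvan} combined with Theorem~\ref{st} (this is exactly the content of the first paragraph of the proof of Theorem~\ref{reachcec}). Hence every oriented diagram in the image satisfies the stated conditions. For the reverse containment, suppose $\overrightarrow{d} \in \overrightarrow{\mathcal{D}}$ is weakly separated at each marked point and avoids all forbidden configurations. Let $d := |\overrightarrow{d}|$ be its underlying unoriented diagram (Remark~\ref{natmap}) and let $\overline{\xi} := \Phi^{-1}(d)$ be the corresponding CEC via Theorem~\ref{a13}. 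Then $\overrightarrow{d}$ is by construction an orientation of $d$ satisfying the hypotheses of Theorem~\ref{reachcec}, so that theorem guarantees $\overline{\xi}$ is reachable and moreover that $\overrightarrow{d} = \overrightarrow{d}_C$ for some $C \in \textbf{c}\text{-mat}(Q)$; thus $\overrightarrow{d}$ lies in the image.

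The only genuine subtlety I anticipate is bookkeeping rather than mathematical: I must be careful that ``reachable'' is a property of the \emph{unoriented} diagram $d$ (equivalently the CEC), whereas the image of the map lives in the space of \emph{oriented} diagrams. The cleanest way to bridge this is to note that the converse direction of Theorem~\ref{reachcec} produces not merely the abstract reachability of $\overline{\xi}$ but the specific matrix $C$ whose oriented diagram $\overrightarrow{d}_C$ matches the given orientation $\overrightarrow{d}$ --- this is exactly what the phrase ``$\overrightarrow{d} = \overrightarrow{d}_C$'' in the definition of reachable records. Since the map of Lemma~\ref{cmatdiag} is injective, the assignment $C \mapsto \overrightarrow{d}_C$ is a bijection onto its image, so identifying the image set-theoretically with the diagrams satisfying the two conditions completes the proof. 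I therefore expect this to be a short corollary: essentially a restatement of Theorem~\ref{reachcec} at the level of oriented diagrams, with no new estimates or constructions required.
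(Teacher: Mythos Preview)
Your proposal is correct and matches the paper's intent: the corollary is stated without proof precisely because it is an immediate restatement of Theorem~\ref{reachcec} at the level of oriented diagrams, and you have correctly identified that the converse direction of that theorem's proof actually constructs a specific $C$ with $\overrightarrow{d} = \overrightarrow{d}_C$ (not merely some orientation of $|\overrightarrow{d}|$). The only thing to tighten is that the forward containment is already explicit in the first paragraph of the proof of Theorem~\ref{reachcec}, so you can simply cite that rather than re-invoking Lemma~\ref{orientconfigs} and Remark~\ref{homnonvan} separately.
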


\begin{remark}
Our work is related to that of Igusa and Ostroff (see \cite{io13}) where they develop the basic cluster theory using mixed cobinary trees.
\end{remark}

\section{Exceptional sequences and linear extensions}
\label{sec:pos}
In this section, we consider the problem of counting the number of {CESs} arising from a given CEC. It turns out this problem can be restated as the problem of counting the number of linear extensions of certain posets.

Let $n+1$ be the number of marked points on the boundary of the disk. Let $c(i,j)$ be a chord inside the disk. We define a clockwise (resp. counterclockwise) \textbf{rotation} of $c(i,j)$ about $i$ to be $c(i,j-1)$ (resp. $c(i,j+1)$) where $j-1$ (resp. $j+1$) denotes the congruence class of $j-1$ (resp $j+1$) mod $n+1$. 

\begin{notation}
Let $c(i,j)$ be a chord in a disk with $n+1$ marked points. Define $\tau c(i,j) := c(i-1, j-1)$ and $\tau^{-1} c(i,j) := c(i+1, j+1)$ where we consider $i\pm 1$ and $j\pm 1$ mod $n+1$. We also define $\rho c(i,j) := c(i,j-1)$ and $\rho^{-1} c(i,j) :=  c(i,j+1)$ where we consider $j\pm 1$ mod $n+1$.
\end{notation}

\begin{remark}
The maps $\tau$ and $\tau^{-1}$ agree with the Auslander-Reiten translation and its inverse, respectively. The maps $\rho$ and $\rho^{-1}$ encode the notion of clockwise and counterclockwise rotation of chords, respectively.
\end{remark}

Let $d \in \mathcal{D}$. Define the \textbf{poset} of $d$, denoted $\mathcal{P}_d$, to be the partially ordered set whose underlying set is the set of chords of $d$ with the relations $x \le y$ if $x$ and $y$ intersect at a marked point $i \in [0,n]$ and $y$ is the first chord obtained from $x$ by a sequence of clockwise {rotations} of $x$ about $i$. Note that $c(i,j_1) \in \mathcal{P}_d$ is \textbf{covered by} $c(i,j_2) \in \mathcal{P}_d$ (equivalently, $c(i,j_2)$ \textbf{covers} $c(i,j_1)$) if and only $c(i,j_2)$ is obtained by single clockwise rotation of $c(i,j_1)$ about $i$. We write $c(i,j_1) \lessdot c(i,j_2)$ to indicate that $c(i,j_2)$ covers $c(i,j_1).$

This construction defines a poset because any oriented cycle in the Hasse diagram of $\mathcal{P}_d$ arises from a cycle in $d$. Since $d$ is a tree, the diagram has no cycles. In Figure~\ref{fig:PosetExample}, we show a diagram $d \in \mathcal{D}$ and its poset $\mathcal{P}_d.$

\begin{figure}[ht]
\centering
\begin{minipage}[b]{0.4\textwidth}
\includegraphics[width=\textwidth]{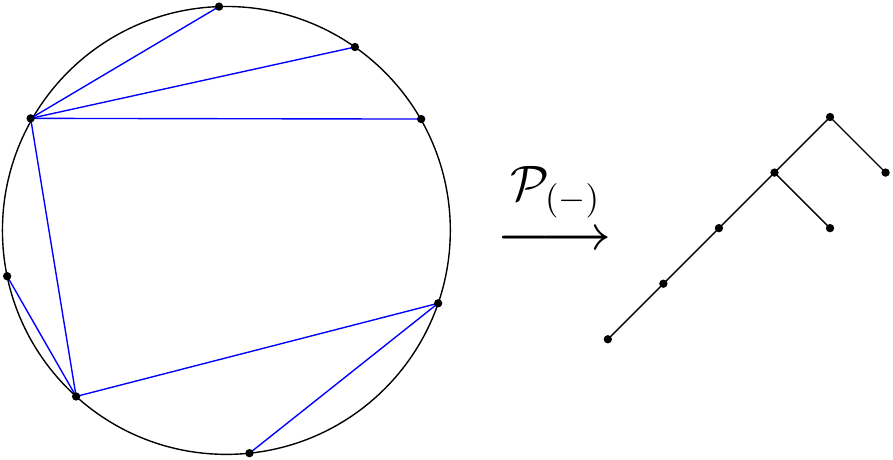}
\caption{A diagram and its poset.}
\label{fig:PosetExample}
\end{minipage}
\quad
\begin{minipage}[b]{0.4\textwidth}
\includegraphics[width=\textwidth]{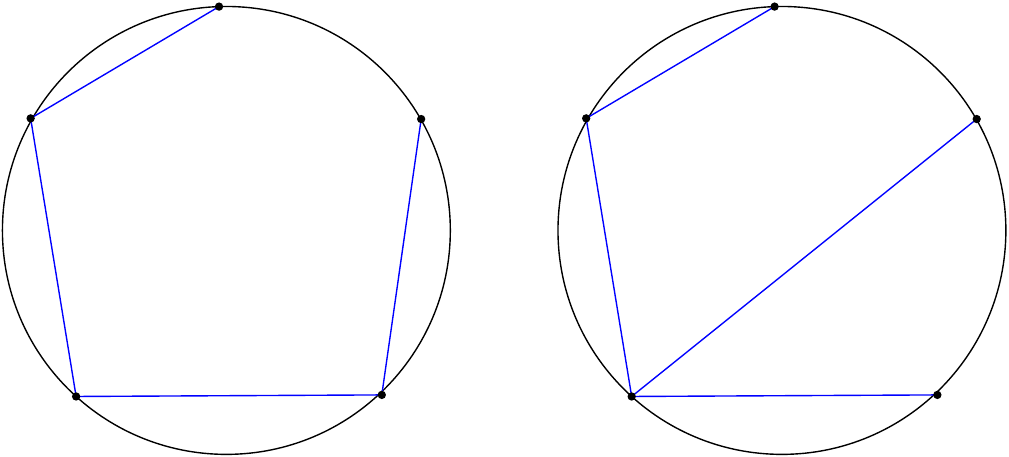}
\caption{Two diagrams with the same poset.}
\label{fig:SamePosets}
\end{minipage}
\end{figure}

In general, the map $\mathcal{D} \rightarrow \mathscr{P}(\mathcal{D}):=\{\mathcal{P}_d: d \in \mathcal{D}\}$ is not injective. For instance, each of the two diagrams in Figure~\ref{fig:SamePosets} have $\mathcal{P}_d = \textbf{4}$ where $\textbf{4}$ denotes the linearly-ordered poset with 4 elements. Our next result describes the posets arising from diagrams in $\mathcal{D}$.

\begin{theorem}\label{posetclassif}
A poset $\mathcal{P} \in \mathscr{P}(\mathcal{D})$ if and only if 

$\begin{array}{rllc}
i) & \text{each $x \in \mathcal{P}$ has at most two covers and covers at most two elements}\\
ii) & \text{the underlying graph of the Hasse diagram of $\mathcal{P}$ has no cycles}\\
iii) & \text{the Hasse diagram of $\mathcal{P}$ is connected}.\\
\end{array}$
\end{theorem}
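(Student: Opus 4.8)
The plan is to prove both implications by exploiting the fact that, for $d \in \mathcal{D}$, the Hasse diagram of $\mathcal{P}_d$ is completely governed by the spanning-tree structure of $d$. I would begin with two structural observations. First, since $d$ has $n$ chords and its underlying graph is a noncrossing forest on the $n+1$ marked points, it is automatically a spanning tree, so every marked point has at least one incident chord. Second, every cover relation of $\mathcal{P}_d$ has the form $c(i,j_1) \lessdot c(i,j_2)$ for two chords that share the marked point $i$ and are rotationally adjacent there; in particular all covers occur between chords meeting at a common endpoint, and distinct chords share at most one endpoint.

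For necessity I would first verify (i): a chord $c(i,j)$ has exactly the two endpoints $i$ and $j$, and around each endpoint the incident chords are linearly ordered by clockwise rotation (they cannot wrap around, since the endpoint lies on the boundary circle), so $c(i,j)$ has at most one immediate clockwise neighbor and at most one immediate counterclockwise neighbor at each of $i$ and $j$. The clockwise neighbors are precisely its covers and the counterclockwise neighbors are precisely the elements it covers, giving at most two of each. For (ii) and (iii) I would count the edges of the Hasse diagram: at a marked point $i$ with $\deg_d(i)$ incident chords there are exactly $\deg_d(i)-1$ cover relations, and since distinct chords share at most one endpoint these are all distinct, so the Hasse diagram has $\sum_{i=0}^{n}(\deg_d(i)-1) = 2n - (n+1) = n-1$ edges on $n$ vertices. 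Connectivity (iii) then follows since the chords incident to a common marked point are linked by a chain of covers, and the connectedness of $d$ lets one pass between any two chords through a sequence of chords each sharing a marked point with the next. A connected graph with $n$ vertices and $n-1$ edges is a tree, which simultaneously yields (ii).

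For sufficiency I would argue by induction on the number $n$ of elements of $\mathcal{P}$, the base case $n=1$ being the single chord $c(0,1)$. Conditions (ii) and (iii) say that the Hasse diagram is a tree $T$; I would choose a leaf $\ell$ of $T$. A leaf carries a single incident cover relation, so $\ell$ is either minimal or maximal, and hence $\mathcal{P}':=\mathcal{P}\setminus\{\ell\}$ is an induced subposet whose Hasse diagram is $T$ with that leaf deleted, still a tree satisfying the degree bounds; thus $\mathcal{P}'$ satisfies (i)--(iii). By induction $\mathcal{P}' = \mathcal{P}_{d'}$ for some $d'$ with $n-1$ chords on $n$ marked points. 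Let $p$ be the unique neighbor of $\ell$ in $T$. If $p \lessdot \ell$, then $\ell$ must become a new clockwise neighbor (cover) of $p$; since $p$ has at most two covers in $\mathcal{P}$ by (i), it has at most one in $\mathcal{P}'$, so at least one endpoint of its chord in $d'$ is clockwise-most and hence free. I would insert a new marked point immediately on the clockwise side of that endpoint and draw a short boundary-hugging chord to it, producing a degree-one leaf of $d'$ that is the immediate clockwise neighbor of $p$'s chord and adjacent to nothing else. The symmetric insertion on the counterclockwise side handles the case $\ell \lessdot p$. The resulting diagram $d$ then has $n$ chords, is a spanning tree, and its poset is $\mathcal{P}'$ together with the single new relation between $p$ and $\ell$, i.e. exactly $\mathcal{P}$.

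The hard part is the geometric insertion step: I must guarantee that the new short chord (a) crosses no existing chord, (b) creates the single intended cover relation with $p$, and (c) creates no other rotational adjacency. Placing the new marked point arbitrarily close to the chosen endpoint and keeping the chord inside a small neighborhood of it secures (a); the freeness of the clockwise-most (resp. counterclockwise-most) slot, which is exactly what bound (i) provides, secures (b); and the fact that the new marked point has degree one, together with the chord being rotationally extremal at the shared endpoint, secures (c). The other point needing care is that deleting $\ell$ introduces no spurious cover relations in $\mathcal{P}'$, so that its Hasse diagram really is $T$ minus the leaf; this is handled by the observation that a leaf of the Hasse diagram is necessarily an extremal element, and deleting an extremal element of a poset changes none of the cover relations among the remaining elements.
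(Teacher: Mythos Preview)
Your argument is correct and takes a genuinely different route from the paper's. For necessity, the paper handles (ii) by a direct contradiction argument (a cycle in the Hasse diagram would force a forbidden configuration of chords), whereas you obtain (ii) and (iii) simultaneously by the edge count $\sum_i(\deg_d(i)-1)=2n-(n+1)=n-1$, which is cleaner. For sufficiency, the paper deletes a \emph{maximal} element $x$; since $x$ may cover two elements this can disconnect the Hasse diagram, forcing a two-case analysis (one involving a disjoint-union construction $d_1\sqcup d_2$, the other involving the shift maps $\tau^{-1},\rho^{-1}$). You instead delete a \emph{leaf} of the Hasse tree, which is always minimal or maximal with a unique neighbour $p$, so the induction never disconnects and only one geometric insertion is needed. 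This is a real simplification. One caution: under the paper's convention (points labeled counterclockwise, $\rho\,c(i,j)=c(i,j-1)$), the rotationally clockwise-most chord at $v$ is $c(v,v+1)$, so to realise a new \emph{cover} of $p$ at its free endpoint $v$ you must insert the new marked point between $v$ and $v+1$---the boundary-counterclockwise side of $v$, not the boundary-clockwise side. Your checklist (a)--(c) is exactly what is needed, but when you carry it out make sure the side is chosen consistently with this convention; once that is fixed, your pendant-chord insertion does precisely what the paper's $\rho^{-1}/\tau^{-1}$ relabeling accomplishes, with less bookkeeping.
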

\begin{proof}
Let $\mathcal{P}_d \in \mathscr{P}(\mathcal{D})$. By definition, $\mathcal{P}_d$ satisfies $i)$. It is also clear that the Hasse diagram of $\mathcal{P}_d$ is connected since $d$ is a connected graph. To see that $\mathcal{P}_d$ satisfies $ii)$, observe that if $\mathcal{P}_d$ had a cycle $C$ in its underlying graph, then $C$ must have an acyclic orientation. Note that we can assume that $C$ is a \textbf{minimal cycle} (i.e. the underlying graph of $C$ does not contain a proper subgraph that is a cycle). Thus there exists $x_C \in \mathcal{P}_d$ such that $x_C \in C$ is covered by two distinct elements $y, z \in C$. Observe that the cycle $C$ in the underlying graph of $\mathcal{P}_d$ is equivalent to a sequence of chords $\{c_i\}_{i = 0}^\ell$ of $d$ in which $y$ and $z$ appear exactly once and where for all $i \in [0,\ell]$ $c_i$ and $c_{i+1}$ (we consider the indices mod $\ell + 1$) share a marked point $j$ and no chord adjacent to $j$ appears between $c_i$ and $c_{i+1}$. Since the chords of $d$ are noncrossing, such a sequence cannot exist. Thus $\mathcal{P}_d$ has no cycles.

To prove the converse, we proceed by induction on the number of elements of $\mathcal{P}$ where $\mathcal{P}$ is a poset satisfying conditions $i), ii), iii)$. If $\#\mathcal{P} = 1$, then $\mathcal{P}$ is the unique poset with one element and $\mathcal{P} = \mathcal{P}_d$ where $d$ is the unique chord diagram associated to the disk with two marked points that is a spanning tree. Assume that for any poset $\mathcal{P}$ satisfying conditions $i), ii), iii)$ with $\#\mathcal{P} = n$ for some positive integer $n$ there exists a chord diagram $d$ such that $\mathcal{P} = \mathcal{P}_d$. Let $\mathcal{Q}$ be a poset satisfying the above conditions and assume $\#\mathcal{Q} = n+1.$ Let $x \in \mathcal{Q}$ be a maximal element. 

If the poset $\mathcal{Q}-\{x\}$ has a disconnected Hasse diagram, then $\mathcal{Q} - \{x\} = \mathcal{Q}_1 + \mathcal{Q}_2$ where $\mathcal{Q}_i$ satisfies $i), ii), iii)$ for $i \in [2]$. By induction, there exists positive integers $k_1, k_2$ satisfying $k_1 + k_2 = n$ and diagrams $$d_i \in \mathcal{D}_{k_i} := \{\text{diagrams } \{c_i(i_\ell, j_\ell)\}_{\ell\in [k_i]} \text{ in a disk with $k_i+1$ marked points}\}$$ where $\mathcal{Q}_i = \mathcal{P}_{d_i}$ for $i \in [2].$ Define $d_1\sqcup d_2 := \{c^\prime(i^\prime_\ell,j^\prime_\ell)\}_{\ell \in [n]}$ to be the diagram in the disk with $n+2$ marked points as follows (we give an example of this operation below with $k_1 = 3$ and $k_2 = 2$ so that $n = k_1 + k_2 = 5$)
{$$\begin{array}{rclcc}
c^\prime(i^\prime_\ell, j^\prime_\ell) & := & \left\{\begin{array}{lcl} c_1(i_\ell, j_\ell) & : & \text{if $\ell \in [k_1]$}\\ \tau^{-(k_1+1)}c_2(i_{\ell - k_1},j_{\ell - k_1})  & : & \text{if $\ell \in [k_1 + 1, n]$}. \end{array}\right.
\end{array}$$}

$$\begin{array}{ccccc}
\includegraphics[scale=.85]{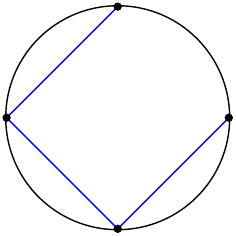} \raisebox{.35in}{$\text{ $\Large\sqcup$ }$} \includegraphics[scale=.85]{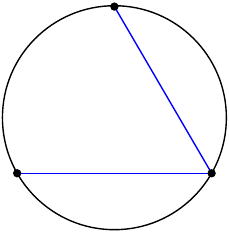} \raisebox{.35in}{ \text{ $\Large=$ } } \includegraphics[scale=.85]{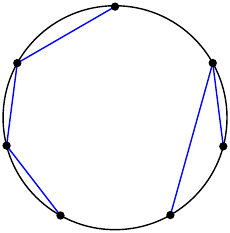}
\end{array}$$

\noindent Define $c^\prime(i^\prime_{n+1}, j^\prime_{n+1}): = c(k_1, n+1)$ and then $d:= \{c^\prime(i^\prime_\ell, j^\prime_\ell)\}_{\ell \in [n+1]}$ satisfies $i), ii), iii),$ and $\mathcal{Q} = \mathcal{P}_d.$

If the Hasse diagram of $\mathcal{Q}-\{x\}$ is connected, then by induction the poset $\mathcal{Q} - \{x\} = \mathcal{P}_d$ for some diagram $d = \{c(i_\ell, j_\ell)\}_{\ell \in [n]} \in \mathcal{D}_n$ where we assume $i_\ell < j_\ell$. Since the Hasse diagram of $\mathcal{Q}-\{x\}$ is connected, it follows that $x$ covers a unique element in $\mathcal{Q}$. Let $y  = c(i(y),j(y)) \in \mathcal{Q} - \{x\}$ ($i(y) < j(y)$) denote the unique element that is covered by $x$ in $\mathcal{Q}$. This means that either $y = c(i(y),i(y) + 1)$ or there are no chords in $d$ obtained by a clockwise rotation of $c(i(y), j(y))$ about $j(y)$. We assume $y = c(i(y), i(y)+1)$. The proof is similar in the case where there are no chords in $d$ obtained by a clockwise rotation of $c(i(y), j(y))$ about $j(y)$.

Regard $d$ as an element of $\mathcal{D}_{n+1}$ by replacing it with $\widetilde{d}:= \{c^\prime(i^\prime_\ell, j^\prime_\ell)\}_{\ell \in [n]}$ defined by (we give an example of this operation below with $n = 6$)

{$$\begin{array}{rclcc}
c^\prime(i^\prime_\ell, j^\prime_\ell) & := & \left\{\begin{array}{lcl} \rho^{-1}c(i_\ell, j_\ell) & : & \text{if $i_\ell \le i(y)$ and $j(y) \le j_\ell$,}\\  \tau^{-1}c(i_{\ell},j_{\ell})  & : & \text{if $j(y)  \le i_\ell$,}\\
c(i_\ell, j_\ell) & : & \text{otherwise}. \end{array}\right.
\end{array}$$}
$$\begin{array}{ccccccccc}
\raisebox{.35in}{$d$} & \raisebox{.35in}{$=$} & \includegraphics[scale=.85]{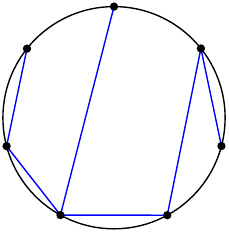} \raisebox{.35in}{ \text{ $\Large \longrightarrow$ } } & \raisebox{.35in}{$\widetilde{d}$} & \raisebox{.35in}{$=$} & \includegraphics[scale=.85]{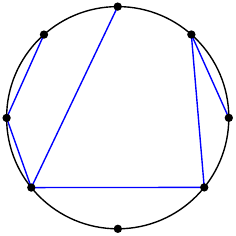}
\end{array}$$
\noindent Define $c^\prime(i^\prime_{n+1}, j^\prime_{n+1}) := c(i(y), i(y)+1)$ and put $d^\prime := \{c^\prime(i^\prime_\ell, j^\prime_\ell)\}_{\ell \in [n+1]}$. As $\mathcal{Q}-\{x\}$ satisfies $i), ii),$ and $iii)$, {it is clear that the resulting chord diagram $d^\prime$ satisfies $\mathcal{P} = \mathcal{P}_{d^\prime}$.} 
\end{proof}

Let $\mathcal{P}$ be a finite poset with $m = \#\mathcal{P}$. Let $f: \mathcal{P} \to \textbf{m}$ be an injective, order-preserving map (i.e. $x \le y$ implies $f(x) \le f(y)$ for all $x,y \in \mathcal{P}$) where $\textbf{m}$ is the linearly-ordered poset with $m$ elements. We call $f$ a \textbf{linear extension} of $\mathcal{P}$. We denote the set of linear extensions of $\mathcal{P}$ by $\mathscr{L}(\mathcal{P})$. Note that since $f$ is an injective map between sets of the same cardinality, $f$ is a bijective map between those sets.

\begin{theorem}\label{CESsandlinexts}
Let $d = \{c(i_\ell, j_\ell)\}_{\ell\in [n]} \in \mathcal{D}$ and let $\overline{\xi}$ denote the corresponding complete exceptional collection. Let $\text{CES}(\overline{\xi})$ denote the set of CESs that can be formed using only the representations appearing in $\overline{\xi}$. Then the map $\chi: \text{CES}(\overline{\xi}) \to \mathscr{L}(\mathcal{P}_d)$ defined by $(X_{i_1,j_1},\ldots, X_{i_n,j_n}) \stackrel{\chi_2}{\longmapsto} \{(c(i_\ell,j_\ell), n+1-\ell)\}_{\ell \in [n]} \stackrel{\chi_1}{\longmapsto} (f(c(i_\ell,j_\ell)) := n+1-\ell)$ is a bijection.
\end{theorem}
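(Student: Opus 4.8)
\textbf{Proof proposal for Theorem~\ref{CESsandlinexts}.}

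The plan is to verify that $\chi = \chi_1 \circ \chi_2$ is well-defined, that each of $\chi_1$ and $\chi_2$ is a bijection, and then to conclude that their composite is a bijection. The map $\chi_2$ is essentially the bijection $\widetilde{\Phi}$ from Theorem~\ref{firstmainresult} in the case $k = n$, so the first step is simply to observe that $\chi_2$ sends $\mathrm{CES}(\overline{\xi})$ bijectively onto the set of good labelings of the fixed underlying diagram $d = |\widetilde{\Phi}(\xi)|$. Indeed, $\mathrm{CES}(\overline{\xi})$ consists of exactly those CESs whose underlying unlabeled diagram is $d$, and under $\widetilde{\Phi}$ these correspond precisely to the good labelings of $d$; Theorem~\ref{firstmainresult} guarantees this correspondence is a bijection. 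So the only real content lies in the map $\chi_1$, which reinterprets a good labeling of $d$ as a linear extension of $\mathcal{P}_d$.

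The heart of the argument is therefore to show that $\chi_1$, sending a good labeling $\{(c(i_\ell,j_\ell),\ell)\}$ to the function $f(c(i_\ell,j_\ell)) := \ell$ (up to the order-reversal bookkeeping $n+1-\ell$), is a bijection between good labelings of $d$ and linear extensions of $\mathcal{P}_d$. First I would unwind both definitions so they can be compared directly. A good labeling assigns distinct values in $[n]$ to the chords of $d$ such that, around each marked point $i$, reading the incident chords from counterclockwise-most to clockwise-most, the labels are increasing. A linear extension is a bijection $f : \mathcal{P}_d \to \mathbf{n}$ with $x \le y \Rightarrow f(x) \le f(y)$. The recollection of the definition of $\mathcal{P}_d$ is that $c(i,j_1) \lessdot c(i,j_2)$ exactly when $c(i,j_2)$ is a single clockwise rotation of $c(i,j_1)$ about a shared marked point $i$. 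The key observation I would establish is that the covering relations of $\mathcal{P}_d$ are precisely the local ``counterclockwise-before-clockwise'' comparisons at each marked point that the good-labeling condition constrains. Concretely, $x \lessdot y$ in $\mathcal{P}_d$ if and only if $x$ is counterclockwise-adjacent to $y$ at some shared marked point, and the good-labeling condition forces the label of $x$ to be smaller than that of $y$ for each such adjacency. Thus a labeling is good if and only if it is order-preserving on all cover relations, which (since covers generate the order) is exactly the condition of being a linear extension.

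Having matched the two conditions, I would argue bijectivity of $\chi_1$ as follows: both good labelings and linear extensions are bijections $\{\text{chords of }d\} \to [n]$, and I have shown a bijection is good exactly when it is order-preserving, i.e.\ a linear extension. Hence the identification of underlying sets $\{\text{chords}\} = \mathcal{P}_d$ carries good labelings onto linear extensions, and $\chi_1$ is the identity on the underlying data (modulo relabeling), so it is a bijection. Finally, composing with the bijection $\chi_2$ yields that $\chi$ is a bijection. I expect the main obstacle to be the precise verification that the covering relations of $\mathcal{P}_d$ coincide exactly with the ``adjacent pair'' comparisons appearing in the good-labeling condition, and in particular that every order relation of $\mathcal{P}_d$ (not just covers) is respected by a good labeling. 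The subtlety is that the good-labeling condition is stated in terms of the full clockwise-ordered list $s_1 < \cdots < s_r$ of labels at each marked point, whereas cover relations are only the adjacent rotations; I would need to check that the transitivity built into the poset order matches the transitivity $s_a < s_b$ for $a < b$ in that list, and that noncrossingness of $d$ (ensuring $\mathcal{P}_d$ is genuinely a forest, as already noted) prevents any spurious comparisons that would break the equivalence.
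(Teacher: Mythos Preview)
Your proposal is correct and follows essentially the same approach as the paper. Both reduce to $\chi_1$ via Theorem~\ref{firstmainresult}, and both establish that a bijective labeling of the chords of $d$ is good if and only if it is order-preserving on $\mathcal{P}_d$; the paper packages this by writing down the explicit inverse $\varphi(f)=\{(c(i_\ell,j_\ell),f(c(i_\ell,j_\ell)))\}$ and checking both composites, whereas you argue directly that the two conditions cut out the same subset of bijections $\{\text{chords}\}\to[n]$, which is a slightly cleaner formulation of the identical idea.
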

\begin{proof}
The map $\chi_2 = \widetilde{\Phi}: \text{CES}(\overline{\xi}) \to \mathcal{D}(n)$ is a bijection by Theorem~\ref{firstmainresult}. Thus it is enough to prove that $\chi_1: \mathcal{D}(n) \to \mathscr{L}(\mathcal{P}_d)$ is a bijection.

First, we show that $\chi_1(d(n)) \in \mathscr{L}(\mathcal{P}_d)$ for any $d(n) \in \mathcal{D}(n)$. Let $d(n) \in \mathcal{D}(n)$ and let $f := \chi_1(d(n))$. Since the chord-labeling of $d(n)$ is good, if $(c_1, \ell_1), (c_2, \ell_2)$ are two labeled chords of $d(n)$ satisfying $c_1 \le c_2$, then $f(c_1) = \ell_1 \le \ell_2 = f(c_2).$ Thus $f$ is order-preserving. As the chords of $d(n)$ are bijectively labeled by $[n]$, we have that $f$ is bijective so $f \in \mathscr{L}(\mathcal{P}_d)$.

Next, define a map 
$$\begin{array}{rcl}
\mathscr{L}(\mathcal{P}_d) & \stackrel{\varphi}{\longrightarrow} & \mathcal{D}(n)\\
f & \longmapsto & \{(c(i_\ell, j_\ell), f(c(i_\ell, j_\ell)))\}_{\ell \in [n]}.
\end{array}$$
To see that $\varphi(f) \in \mathcal{D}(n)$ for any $f \in \mathscr{L}(\mathcal{P}_d)$, consider two labeled chords $(c_1, f(c_1))$ and $(c_2, f(c_2))$ belonging to $\varphi(f)$ where $c_1 \le c_2$. Since $f$ is order-preserving, $f(c_1) \le f(c_2).$ Thus the chord-labeling of $\varphi(f)$ is good so $\varphi(f) \in \mathscr{L}(\mathcal{P}_d)$.

Lastly, we have that $$\chi_1(\varphi(f)) = \chi_1(\{(c(i_\ell, j_\ell), f(c(i_\ell, j_\ell)))\}_{\ell \in [n]}) = f$$ and $$\varphi(\chi_1(\{(c(i_\ell, j_\ell), \ell)\}_{\ell \in [n]})) = \varphi(f(c(i_\ell, j_\ell)):= \ell) = \{(c(i_\ell, j_\ell), \ell)\}_{\ell \in [n]}$$ so $\varphi = \chi^{-1}$. Thus $\chi_1$ is a bijection.
\end{proof}

\section{Permutations and exceptional sequences}
\label{sec: perm}

The authors' initial reason for starting this project was to investigate the problem below posed by D. Rupel and G. Todorov during the 2014 MRC program on cluster algebras. To state this problem, we momentarily allow $Q$ to be any acyclic quiver.

Let \textbf{c}$(n)$-mat($Q$) $:= \{C_R(n) : R \in ET(\widehat{Q})\}$. Let $C(n) \in \textbf{c}(n)$-mat($Q$) and let $\{\overrightarrow{c_i}\}_{i \in [n]}$ denote its \textbf{c}-vectors. Recall that it follows from the work of Chavez (see \cite{c12}) that any $\textbf{c}$-vector $\overrightarrow{c_i}$ of $Q$ satisfies $|\overrightarrow{c_i}| = \underline{\dim}(V_i)$ for some indecomposable representation $V_i$ of $Q$.

\begin{problem}\label{permprob}
Let $\widehat{Q}$ be the framed quiver of an acyclic quiver $Q$. Let $\mu_{i_k} \circ \cdots \mu_{i_1}(\widehat{Q}) \in ET(\widehat{Q})$ and let $C(n) \in \textbf{c}(n)$-mat($Q$) be the corresponding $\bc$-matrix with an ordering $(\overrightarrow{c_n},\ldots, \overrightarrow{c_1})$ on the \textbf{c}-vectors, where $\overrightarrow{c_i}$ denotes the $i$th row of $C(n)$. Let $V_i$ denote the indecomposable representation of $Q$ satisfying $|\overrightarrow{c_i}| = \underline{\dim}(V_i)$.  By Theorem~\ref{st}, there exists a permutation of the \textbf{c}-vectors of $C(n)$, denoted by $\sigma$, where $(V_{\sigma(n)}, \ldots, V_{\sigma(1)})$ is a CES with the property that if there exist positive \textbf{c}-vectors in $C(n)$, then there exists $k \in [n]$ such that $\overrightarrow{c_{\sigma(i)}}$ is positive if and only if $i \in [k].$ Describe these permutations.
\end{problem}

We use the results of this paper to provide a solution to Problem~\ref{permprob} where $Q$ is the linearly-ordered $\mathbb{A}_n$ quiver. Our solution goes beyond the statement of this problem in that we describe all permutations $\sigma$ for which $(V_{\sigma(n)}, \ldots, V_{\sigma(1)})$ is a CES.

\begin{remark}
It is hoped that these permutations can be described for any framed quiver $\widehat{Q}$ where $Q$ is an acyclic quiver, but the techniques of the current paper are only applicable for the linearly-ordered type $\AAA_n$ quiver.  Extending our results to other types will be the subject of future work.
\end{remark}

\subsection{Permutations via linear extensions}
Let $\cD[n]$ be the set of all chord diagrams with $n$ chords labeled bijectively by the elements of $[n]$ (not necessarily with a good labeling).  Recall that for every $\bc$-vector $\overrightarrow{c_\ell}$, we have $\overrightarrow{c_\ell} = \pm \underline{\dim}(X_{i_\ell,j_\ell})$ for some $0 \leq i_\ell < j_\ell \leq n$ (see \cite{c12}).  There is a natural map $\kappa:$ \textbf{c}$(n)$-mat($Q$) $\rightarrow \cD[n]$ defined by
\[
\{\overrightarrow{c_\ell}\}_{\ell \in[n]} \mapsto \{(c(i_\ell,j_\ell),\ell)\}_{\ell \in[n]}.
\]
For convenience, we set $c_\ell := c(i_\ell, j_\ell)$ and $V_\ell := X_{i_\ell,j_\ell}$.

Let $R := \mu_{i_k}\circ \cdots \circ \mu_{i_1}(\widehat{Q})$ and let $C(n):=C_R(n) \in$ \textbf{c}$(n)$-mat($Q$), and let $d_{C(n)}:=\kappa(C(n))$.  We write $\overline{d_{C(n)}}$ for the underlying unlabeled diagram.  As in Section \ref{sec:pos}, $\overline{d_{C(n)}}$ defines a poset $\mathcal{P}_{\overline{d_{C(n)}}}$.  One can write down all linear extensions $\{f_k\}_{k=1}^{p}$ of this poset, where we have set $p := \#\mathscr{L}(\mathcal{P}_{\overline{d_{C(n)}}})$.  Each $f_k$ will give a permutation $\varsigma_k \in \mathfrak{S}_n$
 in the following way.  By definition, $f_k$ defines a total ordering on the chords $\{c_\ell\}_{\ell\in[n]}$ which preserves the order coming from $\mathcal{P}_{\overline{d_{C(n)}}}$.  Define $\varsigma_k$ by $\ell \mapsto f_k(c_\ell)$.  This process gives a collection of permutations $\{\varsigma_k\}_{k=1}^{p}$.  Now, set $\sigma_k := \varsigma_k^{-1}$.
 \begin{theorem}\label{permproof}
Let $\{\sigma_k\}_{k=1}^p$ be defined as above.  For every $k \in [p]$, the ordered list of representations \\ $(V_{\sigma_k(n)},\ldots,V_{\sigma_k(1)})$ is a CES.
 \end{theorem}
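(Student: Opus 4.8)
The plan is to identify the list $(V_{\sigma_k(n)},\ldots,V_{\sigma_k(1)})$ with the image of the linear extension $f_k$ under the inverse of the bijection $\chi$ of Theorem~\ref{CESsandlinexts}; being a CES is then immediate from that theorem.

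First I would verify that Theorem~\ref{CESsandlinexts} actually applies, i.e.\ that $\overline{d_{C(n)}}$ is a bona fide diagram in $\mathcal{D}$ coming from a CEC. The rows of $C(n)$ and of the associated $C \in \bc\text{-mat}(Q)$ coincide as an unordered set of $\bc$-vectors, differing only in that $C(n)$ (coming from $ET(\widehat{Q})$) remembers their order while $C$ (coming from $EG(\widehat{Q})$) does not. By Theorem~\ref{st} the indecomposables $\{V_\ell\}_{\ell\in[n]}$ with $|\overrightarrow{c_\ell}| = \underline{\dim}(V_\ell)$ form a CEC $\overline{\xi}$, and by Theorem~\ref{a13} its unlabeled diagram $\Phi(\overline{\xi})$ is exactly $\overline{d_{C(n)}}$. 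Hence $\overline{d_{C(n)}}\in\mathcal{D}$, the poset $\mathcal{P}_{\overline{d_{C(n)}}}$ of Section~\ref{sec:pos} is well defined, and Theorem~\ref{CESsandlinexts} provides a bijection $\chi = \chi_1\circ\widetilde{\Phi}\colon \text{CES}(\overline{\xi}) \to \mathscr{L}(\mathcal{P}_{d})$ with $d := \overline{d_{C(n)}}$.

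The heart of the argument is then a purely formal unwinding of definitions. Recall from the proof of Theorem~\ref{CESsandlinexts} the inverse map $\varphi = \chi_1^{-1}$, so that $\chi^{-1} = \widetilde{\Phi}^{-1}\circ\varphi = \widetilde{\Psi}\circ\varphi$. Applying this to $f_k$, the diagram $\varphi(f_k) = \{(c_\ell, f_k(c_\ell))\}_{\ell\in[n]}$ assigns to each chord $c_\ell$ the label $f_k(c_\ell) = \varsigma_k(\ell)$. Since $\widetilde{\Psi}$ outputs the representations in order of strictly decreasing chord label, the representation placed in the slot of label $q$ is the $V_\ell$ with $\varsigma_k(\ell)=q$, namely $\ell = \varsigma_k^{-1}(q) = \sigma_k(q)$. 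Reading the slots $q = n, n-1,\ldots,1$ thus produces precisely $(V_{\sigma_k(n)}, V_{\sigma_k(n-1)}, \ldots, V_{\sigma_k(1)})$, so this list equals $\chi^{-1}(f_k) \in \text{CES}(\overline{\xi})$ and is therefore a CES.

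I expect the genuine content to lie entirely in the first step: justifying that the unordered diagram $\overline{d_{C(n)}}$ attached to an ordered $\bc$-matrix from $ET(\widehat{Q})$ belongs to $\mathcal{D}$, so that both the poset construction and Theorem~\ref{CESsandlinexts} are legitimately available. After that, the inversion $\sigma_k := \varsigma_k^{-1}$ exactly cancels the label-reversal built into the bijections $\widetilde{\Phi}$ and $\widetilde{\Psi}$ of Theorem~\ref{firstmainresult}, so no representation-theoretic input beyond Theorems~\ref{st}, \ref{firstmainresult}, and \ref{CESsandlinexts} is required.
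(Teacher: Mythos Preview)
Your proof is correct and follows essentially the same route as the paper: both arguments compute $\widetilde{\Psi}\circ\varphi$ applied to $f_k$, identify the resulting labeled diagram $\{(c_\ell,\varsigma_k(\ell))\}_{\ell\in[n]}$ with $\varphi(f_k)$, and then unwind $\widetilde{\Psi}$ to obtain $(V_{\sigma_k(n)},\ldots,V_{\sigma_k(1)})$. Your explicit check that $\overline{d_{C(n)}}\in\mathcal{D}$ via Theorems~\ref{st} and~\ref{a13} is a welcome clarification that the paper leaves implicit in the setup preceding the theorem.
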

 
Before proving the theorem, we present an example illustrating the method for finding the permutations $\{\sigma_k\}_{k=1}^p$.

\begin{example}\label{example2}
Let $\widehat{Q}$ be the framed linearly-ordered type $\AAA_4$ quiver.  Perform the mutation sequence $\mu_3 \circ \mu_2$.  We obtain the following $\bc$-matrix, labeled diagram, and labeled poset.
\[\raisebox{.2in}{$\left[\begin{array}{c c c c}
1 & 0 & 0 & 0 \\
0 & 0 & 1 & 0 \\
0 & -1 & -1 & 0 \\
0 & 1 & 1 & 1
\end{array}\right]$} \hspace{1cm} \raisebox{-.285in}{$\includegraphics[scale=.6]{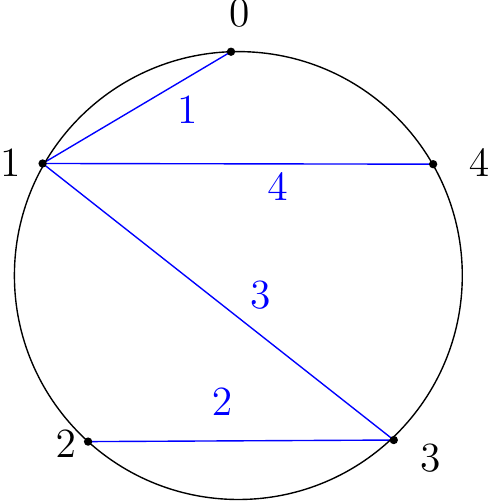}$} \hspace{1cm} \raisebox{-.075in}{\includegraphics[scale=1]{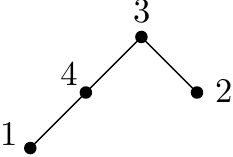}} 
\]
We list the linear extensions of the underlying poset and give, for each linear extension, the permutation gotten by mapping the label on each element of the poset above to its label in the posets below.

\[\includegraphics[scale=1]{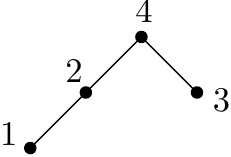} \hspace{1cm} \includegraphics[scale=1]{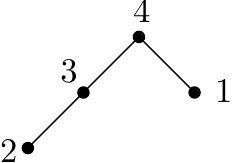} \hspace{1cm} \includegraphics[scale=1]{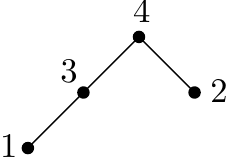}\]
\[\varsigma_1 = (423)  \hspace{1.5cm} \varsigma_2 = (12)(43) \hspace{1.6cm} \varsigma_3 = (43)\]
Taking inverses, we have $\sigma_1 = (324),\ \sigma_2 = (12)(43),$ and $\sigma_3 = (43)$.  Thus, we have the three CESs 
\[\ \ (V_3,V_2,V_4,V_1) \hspace{1.1cm} (V_3,V_4,V_1,V_2) \hspace{1.2cm} (V_3,V_4,V_2,V_1).\]
In other words, we have the CESs $(X_{1,3},X_{2,3},X_{1,4},X_{0,1}),\ (X_{1,3},X_{1,4},X_{0,1},X_{2,3}),$ and $(X_{1,3},X_{1,4},X_{2,3},X_{0,1})$.
\end{example}

\begin{proof}[Proof of Theorem~\ref{permproof}]
By the definition of $\varsigma_k$, we have $\{(c_\ell,\varsigma_k(\ell))\}_{\ell \in [n]} = \{(c_\ell,f_k(c_\ell))\}_{\ell \in [n]}$.  Recall the map 
$$\begin{array}{rcl}
\mathscr{L}(\mathcal{P}_d) & \stackrel{\varphi}{\longrightarrow} & \mathcal{D}(n)\\
f & \longmapsto & \{(c_\ell, f(c_\ell))\}_{\ell \in [n]}
\end{array}$$
defined in the proof of Theorem \ref{CESsandlinexts}.
Notice that $\varphi(f_k) = \{(c_\ell,f_k(c_\ell))\}_{\ell \in[n]}$; thus, as $\{(c_\ell,f_k(c_\ell))\}_{\ell\in[n]}$ is in the image of $\varphi$, it is a diagram with a good labeling.

Now consider the map
$$\begin{array}{rcl}
\mathcal{D}(n) & \stackrel{\widetilde{\Psi}}{\longrightarrow} & \mathcal{E}(n)\\
\{(c_\ell, \ell)\}_{\ell \in [n]} & \longmapsto & (V_n, \ldots, V_1)
\end{array}$$ 
defined in the proof of Theorem \ref{firstmainresult}.  Notice that, $\widetilde{\Psi}(\{(c_\ell,\varsigma_k(\ell))\}_{\ell \in [n]})=(V_{\varsigma_k^{-1}(n)},\ldots,V_{\varsigma_k^{-1}(1)})=(V_{\sigma_k(n)},\ldots,V_{\sigma_k(1)})$ since $\varsigma_k^{-1} = \sigma_k$.  Being in the image of $\widetilde{\Psi}$ guarantees that $(V_{\sigma_k(n)},\ldots,V_{\sigma_k(1)})$ is a CES, as desired.
\end{proof}

\section{Applications}
\label{sec:app}

Here we showcase some interesting results that follow easily from our main theorems.

\subsection{Labeled trees}
In \cite[p. 67]{sw86}, Stanton and White gave a nonpositive formula for the number of vertex-labeled trees with a fixed number of leaves. By connecting our work with that of Goulden and Yong \cite{gy02}, we obtain a positive expression for this number. 
\begin{theorem}\label{trees}Let $T_{n+1}(r) := \{\text{trees on } [n+1] \text{ with } r \text{ leaves}\}$ and $\mathcal{D} := \{\text{diagrams } d = \{c(i_\ell,j_\ell)\}_{\ell \in [n]}\}$. Then
$$\begin{array}{rcl}
\#T_{n+1}(r) & = &\displaystyle \sum_{\begin{array}{c}\small\text{$d \in \mathcal{D}:$ \ $d \text{ has $r$ chords }c(i_j,i_j+1)$}\end{array}}\#\mathscr{L}(\mathcal{P}_d).
\end{array}$$ 
\end{theorem}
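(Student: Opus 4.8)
The plan is to recognize both sides as counts of complete exceptional sequences (equivalently, reflection factorizations of the long cycle) and then to import the Goulden--Yong bijection, tracking a single statistic throughout. Set $N:=n+1$ and read every subscript in $c(i,i+1)$ modulo $N$, so that the chords of the form $c(i,i+1)$ are exactly the $N$ \emph{boundary chords} joining cyclically adjacent marked points (including $c(n,0)$); these are precisely the chords whose representation $X_{i,i+1}$ is a simple module.

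\emph{Rewriting the right-hand side.} By Theorem~\ref{CESsandlinexts}, for each $d\in\mathcal{D}$ the number $\#\mathscr{L}(\mathcal{P}_d)$ equals $\#\mathrm{CES}(\overline{\xi})$ for $\overline{\xi}=\Phi^{-1}(d)$; equivalently, by Theorem~\ref{firstmainresult}, it is the number of good labelings of $d$. Since each CES $\xi$ has a unique underlying chord diagram $|\widetilde{\Phi}(\xi)|$ and its boundary-chord count depends only on that diagram, summing over all $d$ with exactly $r$ boundary chords gives
\[
\sum_{\substack{d\in\mathcal{D}\\ d\text{ has }r\text{ boundary chords}}}\#\mathscr{L}(\mathcal{P}_d)=\#\bigl\{\text{CESs }\xi:\ |\widetilde{\Phi}(\xi)|\text{ has exactly }r\text{ boundary chords}\bigr\}.
\]
Thus it suffices to count CESs by their number of boundary chords.

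\emph{Passing to the symmetric group.} Using the bijection between exceptional sequences and chains in the lattice of noncrossing partitions developed in this section, a CES $(X_{i_1,j_1},\dots,X_{i_n,j_n})$ corresponds to a factorization of the Coxeter element $c=(0,1,\dots,n)\in\mathfrak{S}_N$ into $n$ transpositions $c=t_1\cdots t_n$ with $t_\ell=(i_\ell,j_\ell)$, the order being recorded by the chord labels. A boundary chord $c(i,i+1)$ then corresponds to a transposition of two cyclically adjacent symbols of $c$. Hence the count above equals the number of reflection factorizations of $c$ into $n$ transpositions having exactly $r$ factors that transpose cyclically adjacent symbols.

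\emph{Invoking Goulden--Yong.} Their bijection \cite{gy02} identifies such factorizations of the long $N$-cycle---equivalently, our good-labeled noncrossing trees---with vertex-labeled trees on $[N]$. The point I need is that under this bijection the number of factors transposing cyclically adjacent symbols (i.e.\ the number of boundary chords) is carried to the number of leaves of the output tree. Granting this, the two reductions give
\[
\#T_{N}(r)=\#\{\text{trees on }[N]\text{ with }r\text{ leaves}\}=\sum_{\substack{d\in\mathcal{D}\\ d\text{ has }r\text{ boundary chords}}}\#\mathscr{L}(\mathcal{P}_d),
\]
which is the assertion. The hard part is exactly this final statistic-matching: one must align our conventions (which transposition a chord encodes, and the good-labeling condition of Theorem~\ref{firstmainresult}) with those of \cite{gy02}, and then verify that the Goulden--Yong tree-building rule sends a factor transposing adjacent symbols to a leaf. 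I would confirm this by unwinding the construction on the boundary chords and checking small cases; for instance, at $N=4$ the four noncrossing boundary paths, each admitting a unique good labeling, account for the four stars (trees with $3$ leaves), while the eight diagrams carrying one diagonal contribute twelve good labelings in all, matching the twelve paths (trees with $2$ leaves). The residual bookkeeping---that distinct diagrams contribute disjointly, and that the wrap-around chord $c(0,n)$ is counted exactly once---is routine.
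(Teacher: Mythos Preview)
Your argument is essentially the paper's own proof: both rewrite the right-hand side as $\#\{d(n)\in\mathcal{D}(n):d(n)\text{ has }r\text{ boundary chords}\}$ via Theorem~\ref{CESsandlinexts}/Theorem~\ref{firstmainresult}, and then invoke \cite[Theorem~1.1]{gy02} to match this with $\#T_{n+1}(r)$. The intermediate passage through reflection factorizations of the long cycle is correct but unnecessary---the paper simply cites Goulden--Yong directly on good-labeled diagrams---and your hedging about the statistic-matching (``boundary chords $\leftrightarrow$ leaves'') is precisely what \cite[Theorem~1.1]{gy02} asserts, so you may state it outright rather than propose to verify it by small cases.
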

\begin{proof}
Observe that 
$$\begin{array}{rcl}
\displaystyle \sum_{\begin{array}{c}\small\text{$d \in \mathcal{D}$ : $d$ \text{ has $r$}}\\ \small \text{chords $c(i_j,i_j+1)$}\end{array}}\#\mathscr{L}(\mathcal{P}_d) & = & \displaystyle \sum_{\begin{array}{c}\small\text{$d \in \mathcal{D}: d \text{ has $r$}$}\\ \small \text{$\text{chords $c(i_j, i_j + 1)$}$}\end{array}}\#\{\text{good labelings of $d$}\} \\ 
& = & \#\left\{d(n) \in \mathcal{D}(n): \begin{array}{l}d(n) \text{ has $r$ chords $c(i_j,i_j + 1)$}\\ \text{for some $i_1,\ldots, i_r \in [0,n]$}\end{array}\right\}
\end{array}$$
where we consider $i_j + 1$ mod $n+1$. By \cite[Theorem 1.1]{gy02}, we have a bijection between diagrams $d \in \mathcal{D}$ with $r$ chords of the form $c(i_j, i_j + 1)$ for some $i_1, \ldots, i_r  \in [0,n]$ with good labelings and elements of $T_{n+1}(r)$. 
\end{proof}
\begin{corollary}
We have $(n+1)^{n-1} = \sum_{d \in \mathcal{D}} \#\mathscr{L}(\mathcal{P}_d)$.
\end{corollary}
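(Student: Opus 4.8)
The plan is to sum the identity of Theorem~\ref{trees} over all possible numbers of leaves $r$ and interpret each side separately. First I would recall that a tree on the vertex set $[n+1]$ has somewhere between $2$ and $n$ leaves, so summing $\#T_{n+1}(r)$ over all admissible $r$ counts \emph{every} tree on $[n+1]$ exactly once. By Cayley's formula, the total number of such trees is $(n+1)^{n-1}$, giving
\[
\sum_{r} \#T_{n+1}(r) = (n+1)^{n-1}.
\]

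On the diagram side, the key observation is that every diagram $d \in \mathcal{D}$ has a well-defined, fixed number of chords of the form $c(i_j, i_j+1)$ (chords joining adjacent marked points). Consequently, the sets
\[
\mathcal{D}_r := \{d \in \mathcal{D} : d \text{ has exactly } r \text{ chords of the form } c(i_j,i_j+1)\}
\]
partition $\mathcal{D}$ as $r$ ranges over its admissible values. Summing the right-hand side of Theorem~\ref{trees} over $r$ therefore merely reorganizes a double sum into a single sum over all of $\mathcal{D}$:
\[
\sum_{r} \ \sum_{d \in \mathcal{D}_r} \#\mathscr{L}(\mathcal{P}_d) \;=\; \sum_{d \in \mathcal{D}} \#\mathscr{L}(\mathcal{P}_d).
\]
Equating the two totals yields the desired identity $(n+1)^{n-1} = \sum_{d \in \mathcal{D}} \#\mathscr{L}(\mathcal{P}_d)$.

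I do not expect a genuine obstacle here; the argument is essentially bookkeeping on top of Theorem~\ref{trees}. The one point requiring a sentence of care is confirming that the index set of $r$ agrees on both sides, i.e.\ that the values of $r$ for which $T_{n+1}(r)$ is nonempty coincide exactly with the values for which $\mathcal{D}_r$ is nonempty, with no tree or diagram missed or double-counted. This is immediate from the Goulden--Yong bijection invoked in the proof of Theorem~\ref{trees}, which matches leaves of the tree with boundary chords $c(i_j,i_j+1)$ of the diagram; since that bijection is already established, the ranges automatically match and the summation is legitimate.
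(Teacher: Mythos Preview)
Your proposal is correct and matches the paper's proof essentially line for line: both sum Theorem~\ref{trees} over all $r$, invoke Cayley's formula on the tree side, and collapse the double sum on the diagram side using the partition of $\mathcal{D}$ by the number of boundary chords. Your extra remark about matching the ranges of $r$ is harmless but unnecessary, since summing over all $r \ge 0$ automatically makes empty terms contribute zero.
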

\begin{proof}
Let $T_{n+1} := \{\text{trees on [n+1]}\}.$ One has that
$$\begin{array}{cclccc}
(n+1)^{n-1} & = & \#T_{n+1} \\
& = & \displaystyle\sum_{r \ge 0} \#T_{n+1}(r) \\
& = & \displaystyle\sum_{r \ge 0}  \sum_{\begin{array}{c}\small\text{$d \in \mathcal{D}$ : $d$ \text{ has $r$}}\\ \small \text{chords $c(i_j,i_j+1)$}\end{array}}\#\mathscr{L}(\mathcal{P}_d) & \text{(by Theorem~\ref{trees})} \\
& = & \displaystyle \sum_{d \in \mathcal{D}} \#\mathscr{L}(\mathcal{P}_d).
\end{array}$$\end{proof}

\begin{remark}
It is natural to consider the connection between our work and parking functions of type $\AAA$.  This connection is investigated in \cite{gg13}.
\end{remark}

\subsection{Reddening sequences}
In \cite{k12}, Keller proves that for any quiver $Q$, any two reddening mutation sequences applied to $\widehat{Q}$ produce isomorphic ice quivers.  As mentioned in \cite{kel13}, his proof is highly dependent on representation theory and geometry, but the statement is purely combinatorial--we give a combinatorial proof of this result for the linearly-ordered quiver $Q$.

Let $R \in EG(\widehat{Q})$.  A mutable vertex $i$ of $\widehat{Q}$ is called \textbf{green} in $R$ if there are no arrows $j \to i$ with $j \in [n+1,m]$.  Otherwise, $i$ is called \textbf{red}.  A sequence of mutations $\mu_{i_r}\circ \cdots \circ \mu_{i_1}$ is \textbf{reddening} if all vertices of the mutated quiver $\mu_{i_r}\circ \cdots \circ \mu_{i_1}(\widehat{Q})$ are red.  Recall that an isomorphism of quivers that fixes the frozen vertices is called a \textbf{frozen isomorphism}.  We now state the theorem.
\begin{theorem}
If $\mu_{i_r}\circ \cdots \circ \mu_{i_1}$ and $\mu_{j_s}\circ \cdots \circ \mu_{j_1}$ are two reddening sequences of $\widehat{Q}$, then there is a frozen isomorphism $\mu_{i_r}\circ \cdots \circ \mu_{i_1}(\widehat{Q}) \cong \mu_{j_s}\circ \cdots \circ \mu_{j_1}(\widehat{Q})$.
\end{theorem}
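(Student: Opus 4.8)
The plan is to prove the sharper statement that every reddening sequence applied to $\widehat{Q}$ terminates at an ice quiver frozen-isomorphic to the coframed quiver $\widecheck{Q}$; since $\widecheck{Q}$ represents a single frozen-isomorphism class, this immediately yields the theorem. Fix a reddening sequence, let $R$ be the resulting ice quiver, and write $B=(b_{ij})$ for its exchange matrix and $C$ for its $\bc$-matrix. First I would translate the reddening hypothesis into the language of $\bc$-vectors: a mutable vertex $i$ is red precisely when some frozen vertex maps into it, that is, $b_{ij}<0$ for some $j\in[n+1,2n]$, which by the sign-coherence of $\bc$-vectors \cite{dwz10} is equivalent to $\overrightarrow{c_i}$ being negative. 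Thus $R$ being all red is the same as every $\bc$-vector of $C$ being negative.

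The combinatorial heart of the argument is to show that an all-negative $\bc$-matrix is forced, up to a permutation of its rows, to equal $-I$. By Theorem~\ref{st}, $C$ determines a complete exceptional collection $\{X_{a_\ell,b_\ell}\}_{\ell\in[n]}$ whose members are pairwise $\Hom$-orthogonal, since all the $\bc$-vectors share the same (negative) sign. Suppose two chords of the associated diagram (Theorem~\ref{a13}) shared a left endpoint, say $c(a,b)$ and $c(a,b')$ with $b<b'$. Then Lemma~\ref{first} gives $\Hom_{kQ}(X_{a,b},X_{a,b'})\neq 0$, contradicting $\Hom$-orthogonality; the analogous use of Lemma~\ref{last} rules out two chords sharing a right endpoint. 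Consequently the $n$ chords have pairwise distinct left endpoints and pairwise distinct right endpoints. Since each left endpoint lies in $\{0,\dots,n-1\}$ and each right endpoint lies in $\{1,\dots,n\}$, these endpoints must exhaust both sets, whence
\[
\sum_{\ell=1}^{n}(b_\ell-a_\ell)=\sum_{\ell=1}^{n}b_\ell-\sum_{\ell=1}^{n}a_\ell=(1+\cdots+n)-(0+\cdots+(n-1))=n.
\]
As each summand is at least $1$ and there are $n$ of them, every chord satisfies $b_\ell-a_\ell=1$. Therefore the collection is exactly the simple modules $\{X_{0,1},X_{1,2},\dots,X_{n-1,n}\}$, its diagram is the boundary path, and $C=-I$ after reordering rows.

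It remains to recover the full ice quiver from this $\bc$-matrix. Since both reddening sequences produce a $\bc$-matrix equal to $-I$ up to a permutation of rows, the two resulting seeds determine the same element of $\bc$-mat$(Q)$; as the $\bc$-matrix determines the seed up to frozen isomorphism (a standard consequence of the theory of $C$- and $G$-matrices, which forces the mutable part of $R$ back to the original $Q$ and the frozen arrows into the pattern $n+i\to i$), both sequences terminate at $\widecheck{Q}$ up to frozen isomorphism, giving the desired isomorphism. I expect the main obstacle to be precisely this last passage from the $\bc$-matrix to the entire ice quiver: the rigidity argument above pins down $C$ completely and combinatorially, but concluding that the mutable part must equal $Q$ requires an input beyond the chord-diagram model, namely either tropical duality between $G$- and $C$-matrices or the injectivity of $R\mapsto C_R$ on $EG(\widehat{Q})$. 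The chord-diagram rigidity (distinct endpoints together with the length count) is the genuinely new content, whereas this final identification is where care is needed to keep the proof self-contained and combinatorial.
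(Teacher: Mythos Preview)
Your argument is correct and follows the same high-level strategy as the paper: show that any all-red $\bc$-matrix must equal $-I_n$ up to row permutation, then invoke the fact that the $\bc$-matrix determines the ice quiver in $EG(\widehat{Q})$ up to frozen isomorphism (the paper cites \cite{by14} for exactly this last step, so your candid remark about needing external input there matches what the paper does).

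Where you differ is in the combinatorial core. The paper deduces $C=-I_n$ by appealing to its classification of $\bc$-matrices (Corollary~\ref{c-matClassif}): an all-negative oriented diagram that avoids the forbidden two-chord configurations of Theorem~\ref{reachcec} is forced to be the boundary path $\{\overrightarrow{c}(i,i-1)\}_{i\in[n]}$. You instead go directly through Theorem~\ref{st}: same-sign $\bc$-vectors give $\Hom$-orthogonal indecomposables, and then Lemmas~\ref{first} and~\ref{last} forbid shared left or shared right endpoints. Your pigeonhole/length-sum count (distinct left endpoints exhaust $\{0,\dots,n-1\}$, distinct right endpoints exhaust $\{1,\dots,n\}$, hence $\sum(b_\ell-a_\ell)=n$ with each summand $\ge 1$) is a clean and fully explicit way to conclude that every chord has length one, and it sidesteps the need for Theorem~\ref{reachcec} entirely. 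Note that your argument does not (and need not) exclude the Lemma~\ref{middle} situation $c(i,j),\,c(j,j')$, since $\Hom$ vanishes both ways there; the counting step handles it automatically. In short: same architecture, but your rigidity argument is more elementary and self-contained, while the paper's version showcases its classification theorem.
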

\begin{proof}
Let $\mu_{i_r}\circ \cdots \circ \mu_{i_1}$ be any reddening sequence.  Denote by $C$ the $\bc$-matrix of the final quiver.  By Corollary~\ref{c-matClassif}, $C$ corresponds to an oriented diagram  $\overrightarrow{d}_C$ with all chords of the form $\overrightarrow{c}(j,i)$ for some $i$ and $j$ satisfying $i < j$.  As $\overrightarrow{d}_C$ avoids the configurations of Theorem \ref{reachcec}, we conclude that $\overrightarrow{d}_C = \{\overrightarrow{c}(i, i-1)\}_{i\in[n]}$ and $C = -I_n$.  Since \textbf{c}-matrices are in bijection with ice quivers in $EG(\widehat{Q})$ (see \cite{by14}) and since $\widecheck{Q}$ is an ice quiver in $EG(\widehat{Q})$ whose \textbf{c}-matrix is $-I_n$, we obtain the desired result.
\end{proof}

\subsection{Noncrossing partitions and exceptional sequences}
A \textbf{partition} of $[n]$ is a collection $\pi = \{B_\alpha\}_{\alpha \in I} \in 2^{[n]}$ of subsets of $[n]$ called \textbf{blocks} that are nonempty, pairwise disjoint, and whose union is $[n].$ We denote the lattice of set partitions of $[n]$ by $\Pi_n$. A set partition $\pi = \{B_{\alpha}\}_{\alpha \in I} \in \Pi_n$ is called \textbf{noncrossing} if for any $i < j < k < \ell$ where $i, k \in B_{\alpha_1}$ and $j, \ell \in B_{\alpha_2}$, one has $B_{\alpha_1} = B_{\alpha_2}.$ We denote the lattice of noncrossing partitions of $[n]$ by  $NC^{\mathbb{A}}(n)$.

Label the vertices of a convex $n$-gon $\mathcal{S}$ with elements of $[n]$ so that reading the vertices of $\mathcal{S}$ counterclockwise determines an increasing sequence mod $n$. We can thus regard $\pi = \{B_\alpha\}_{\alpha \in I} \in NC^\mathbb{A}(n)$ as a collection of convex hulls $B_\alpha$ of vertices of $\mathcal{S}$ where $B_\alpha$ has empty intersection with any other block $B_{\alpha^\prime}$.

Let $n = 5$. The following partitions all belong to $\Pi_5$, but only $\pi_1, \pi_2, \pi_3 \in NC^\mathbb{A}(5).$ $$\pi_1 = \{\{1\}, \{2,4,5\}, \{3\}\}, \pi_2 = \{\{1,4\}, \{2,3\}, \{5\}\}, \pi_3 = \{\{1,2,3\}, \{4,5\}\}, \pi_4 = \{\{1,3,4\}, \{2,5\}\}$$  Below we represent the partitions $\pi_1, \ldots, \pi_4$ as convex hulls of sets of vertices of a convex pentagon.  We see from this representation that $\pi_4 \not \in NC^\mathbb{A}(5).$

\begin{center}
\includegraphics[scale=.75]{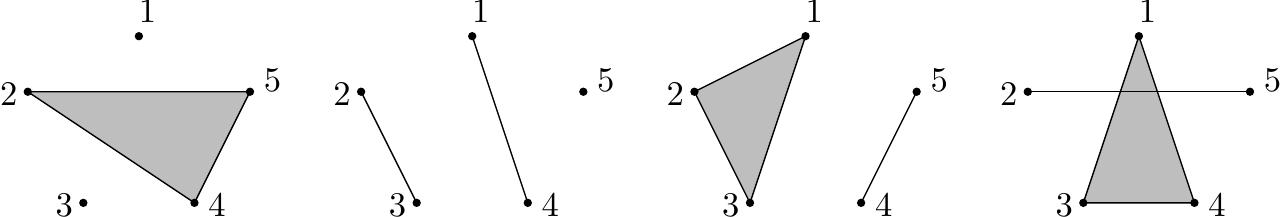}
\end{center}

\begin{theorem}\label{chains}
Let $k \in [n].$ There is a bijection between $\mathcal{D}(k)$ and the following chains in $NC^{\AAA}(n+1)$ \[\left\{\{\{i\}\}_{i \in [n+1]}  < \pi_1 < \cdots < \pi_{k} \in (NC^\mathbb{A}(n+1))^{k+1}: \begin{array}{c}\pi_j = (\pi_{j-1} \backslash\{B_{\alpha}, B_\beta\})\sqcup \{B_\alpha \sqcup B_\beta\}\\ \text{for some $B_\alpha \neq B_\beta$ in $\pi_{j-1}$}\end{array}\right\}.\] In particular, when $k = n$, there is a bijection between $\mathcal{D}(k)$ and maximal chains in $NC^{\mathbb{A}}(n+1)$.
\end{theorem}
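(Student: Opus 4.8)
The plan is to construct an explicit bijection $\Theta\colon\mathcal{D}(k)\to\mathcal{C}(k)$, where $\mathcal{C}(k)$ denotes the set of chains described in the statement, and to exhibit an explicit inverse. First I would use the labels of a good diagram $d(k)=\{(c(i_\ell,j_\ell),s_\ell)\}_{\ell\in[k]}$ to totally order its chords, writing $e_1,\ldots,e_k$ where $e_m$ is the chord carrying label $m$. Identifying the marked point $i\in[0,n]$ with $i+1\in[n+1]$, I set $\pi_0=\{\{i\}\}_{i\in[n+1]}$ and let $\pi_m$ be obtained from $\pi_{m-1}$ by merging the two blocks containing the endpoints of $e_m$, and define $\Theta(d(k)):=(\pi_0\lessdot\pi_1\lessdot\cdots\lessdot\pi_k)$.

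To see that $\Theta$ is well defined I would verify two points. Since the underlying graph of a chord diagram is a forest, adjoining $e_m$ to $\{e_1,\ldots,e_{m-1}\}$ never closes a cycle, so the endpoints of $e_m$ lie in distinct blocks of $\pi_{m-1}$ and $\pi_{m-1}\lessdot\pi_m$ is a genuine cover in $NC^{\mathbb{A}}(n+1)$. Moreover, the blocks of $\pi_m$ are precisely the connected components of the noncrossing forest $\{e_1,\ldots,e_m\}$, and a short separation argument shows these form a noncrossing partition: if two blocks interleaved as $i<j<k<\ell$ with $i,k$ in one and $j,\ell$ in the other, then any path from $i$ to $k$ in the forest separates $j$ from $\ell$ and so must cross a path from $j$ to $\ell$, contradicting that no two chords cross. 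Hence each $\pi_m$ is noncrossing and $\Theta(d(k))\in\mathcal{C}(k)$. Note that this direction does not use goodness of the labeling.

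The inverse is where goodness enters, and I expect this to be the main obstacle. Given a chain in $\mathcal{C}(k)$, I would reconstruct the diagram one chord at a time: at step $m$ the chain specifies two blocks $B_\alpha,B_\beta$ of $\pi_{m-1}$ to be merged, and I must add a single chord $e_m$ joining a point of $B_\alpha$ to a point of $B_\beta$ and carrying the (currently largest) label $m$. The key lemma to prove is that there is a \emph{unique} such chord for which $\{e_1,\ldots,e_m\}$ stays noncrossing and the extended labeling stays good. Existence follows because $\pi_m$ being noncrossing forces $B_\alpha$ and $B_\beta$ to be non-interleaving, leaving an angular gap in which a chord can be drawn without meeting the forest realizing $\pi_{m-1}$. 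For uniqueness I would translate goodness into geometry: since $e_m$ receives the maximal label present, goodness is equivalent to requiring $e_m$ to be the most clockwise chord at each of its two endpoints, and this ``most clockwise at both endpoints'' condition singles out one admissible pair of vertices in the gap between $B_\alpha$ and $B_\beta$. This uniqueness is exactly what makes the connecting chord recoverable from the chain, which records only which blocks are merged and not which vertices are joined; thus it is precisely goodness that upgrades the (non-injective) component map of the previous paragraph to a bijection.

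Finally I would check that $\Theta$ and its inverse are mutually inverse. Starting from a good diagram, at step $m$ its chord $e_m$ has the maximal label among $e_1,\ldots,e_m$, so by goodness it is most clockwise at both endpoints and therefore agrees with the canonical chord produced above; this gives injectivity, while surjectivity is immediate from the reconstruction. Composing with the bijection $\widetilde{\Phi}$ of Theorem~\ref{firstmainresult} then yields the promised interpretation of exceptional sequences as such chains. For the last assertion, when $k=n$ the forest has $n$ edges on $n+1$ vertices and is thus a spanning tree; its single connected component makes $\pi_n$ the one-block partition, so the image chain runs from the bottom to the top of $NC^{\mathbb{A}}(n+1)$ and, having length equal to the rank $n$, is maximal. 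Conversely every maximal chain has length $n$ and so corresponds to a diagram with $n$ chords, giving the claimed restriction.
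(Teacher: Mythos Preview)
Your approach is essentially the paper's: the forward map is exactly the paper's map $f$ (merge the blocks containing the endpoints of the chord with label $m$, in increasing order of $m$), and both you and the paper construct the inverse by reconstructing the chords one step at a time from the chain. The only difference is how the added chord is specified at each step. The paper gives an explicit geometric recipe, choosing the chord $c(s_1-1,t_2-1)$ for a particular pair of boundary vertices $s_1\in B_1$, $t_2\in B_2$ read off from the relative position of the two merged blocks (illustrated by a figure); you instead \emph{characterize} the chord as the unique one keeping the partial labeling good, i.e.\ the one that is most clockwise at each of its endpoints among the chords placed so far. These two descriptions pick out the same chord. Your phrasing has the advantage that goodness of the resulting labeled diagram is built in by construction, a point the paper's write-up leaves implicit; conversely, the paper's explicit formula makes the noncrossing check more direct. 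In either presentation the one step that genuinely needs an argument is precisely the one you flag as the main obstacle: that ``most clockwise at both endpoints'' (equivalently, the paper's recipe) singles out one and only one admissible chord joining $B_\alpha$ to $B_\beta$.
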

\begin{proof}
Let $d(k) = \{(c(i_\ell,j_\ell),\ell)\}_{\ell \in [k]} \in \mathcal{D}(k)$. Define $\pi_{d(k),1} := \{\{i\}\}_{i \in [n+1]} \in \Pi_{n+1}.$ Next, define $\pi_{d(k),2} := \left(\pi_{d(k),1}\backslash \{\{i_1 + 1\}, \{j_1+1\}\}\right) \sqcup \{i_1+1,j_1+1\}$. Now assume that $\pi_{d(k),s}$ has been defined for some $s \in [k]$. Define $\pi_{d(k),s+1}$ to be the partition obtained by merging the blocks of $\pi_{d(k),s}$ containing $i_{s}+1$ and $j_{s}+1$. Now define $f(d(k)) := \{\pi_{d(k),s}: s \in [k+1]\}.$

It is clear that $f(d(k))$ is a chain in $\Pi_{n+1}$ with the desired property as $\pi_1 \lessdot \pi_2$ in $\Pi_{n+1}$ if and only if $\pi_2$ is obtained from $\pi_1$ by merging exactly two distinct blocks of $\pi_1.$ To see that each $\pi_{d(k),s} \in NC^\mathbb{A}(n+1)$ for all $s \in [k+1]$, suppose a crossing of two blocks occurs in a partition appearing in $f(d(k))$. Let $\pi_{d(k),s}$ be the smallest partition of $f(d(k))$ (with respect to the partial order on set partitions) with two blocks crossing blocks $B_1$ and $B_2$. Without loss of generality, we assume that $B_2 \in \pi_{d(k),s}$ is obtained by merging the blocks $B_{\alpha_1}, B_{\alpha_2} \in \pi_{d(k),s-1}$ containing $i_{s-1} + 1$ and $j_{s-1}+1$, respectively. This means that $d(k)$ has a chord $c(i_{s-1},j_{s-1})$ that crosses at least one other chord of $d(k)$. This contradicts that $d(k) \in \mathcal{D}(k)$. Thus $f(d(k))$ is a chain in $NC^\mathbb{A}(n+1)$ with the desired property.

Next, we define a map $g$ that is the inverse of $f$. Let $C = (\pi_1 = \{\{i\}\}_{i \in [n+1]} < \pi_2 < \cdots < \pi_{k+1}) \in (NC^\mathbb{A}(n+1))^{k+1}$ be a chain where each partition in $C$ satisfies $\pi_j = (\pi_{j-1}\backslash\{B_\alpha, B_\beta\}) \sqcup \{B_\alpha \sqcup B_\beta\}$ for some $B_\alpha \neq B_\beta$ in $\pi_{j-1}$. As $\pi_2 = \left(\pi_1 \backslash \{\{s_1\}, \{t_1\}\}\right)\sqcup \{s_1, t_1\}$, define $c(i_1,j_1) := c(s_1-1, t_1-1)$ where we consider $s_1 - 1$ and $t_1-1$ mod $n+1.$ Assume $s_1 < t_1$. If $t_1$ is in a block of size 3 in $\pi_3$, let $t$ denote the element of this block where $t \neq s_1, t_1.$ If $t$ satisfies $s_1 < t < t_1$, define $c(i_2,j_2) := c(s_1-1,t-1)$. Otherwise, define $c(i_2,j_2) := c(t_1-1, t-1)$. If there is no block of size 3 in $\pi_3$, define $c(i_2,j_2) := c(s_2-1, t_2-1)$ where $\{s_2\}$ and $\{t_2\}$ were singleton blocks in $\pi_2$ and $\{s_2,t_2\}$ is a block in $\pi_3.$ 

Now suppose we have defined $c(i_r, j_r)$. Let $B$ denote the block of $\pi_{r+2}$ obtained by merging two blocks of $\pi_{r+1}$. If $B$ is obtained by merging two singleton blocks $\{s_{r+1}\}, \{t_{r+1}\} \in \pi_{r+1}$, define $c(i_{r+1}, j_{r+1}) := c(s_{r+1}-1, t_{r+1}-1).$ Otherwise, $B = B_1\sqcup B_2$ where $B_1,B_2 \in \pi_{r+1}.$ Now note that, up to rotation and up to adding or deleting elements of $[n+1]$ for $B_1$ and $B_2$, $B_1\sqcup B_2$ appears in $\pi_{r+2}$ as follows. 

\begin{center}
\includegraphics[scale= 1.4]{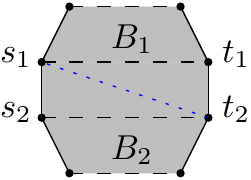}
\end{center}

\noindent Thus define $c(i_{r+1}, j_{r+1}) := c(s_1-1,t_2-1)$. Finally, put $g(C) := \{(c(i_\ell,j_\ell),\ell): \ell \in [k]\}$.

We claim that $g(C)$ has no crossing chords. Suppose $(c(s_i,t_i), i)$ and $(c(s_j,t_j),j)$ are crossing chords in $g(C)$ with $i < j$ and $i,j \in [k]$. We further assume that $$j = \min\{j^\prime \in [i+1,k]: \text{$(c(s_{j^\prime},t_{j^\prime}), j^\prime)$ crosses $(c(s_i,t_i),i)$ in $g(C)$}\}.$$
We observe that $s_i + 1, t_i + 1 \in B_1$ for some block $B_1 \in \pi_{j}$ and that $s_j+1, t_j+1 \in B_2$ for some block $B_2 \in \pi_{j+1}$. We further observe that $s_j+1, t_j+1 \not \in B_1$ otherwise, by the definition of the map $g$, the chords $(c(s_i,t_i),i)$ and $(c(s_j,t_j),j)$ would be noncrossing. Thus $B_1, B_2 \in \pi_{j+1}$ are distinct blocks that cross so $\pi_{j+1} \not \in NC^\mathbb{A}(n+1).$ We conclude that $g(C)$ has no crossing chords so $g(C) \in \mathcal{D}(k)$.

To complete the proof, we show that $g\circ f = 1_{\mathcal{D}(k)}$. The proof that $f \circ g$ is the identity map is similar. Let $d(k) \in \mathcal{D}(k)$. Then $f(d(k)) = \{\{i\}\}_{i \in [n+1]} < \pi_1 < \cdots < \pi_k$ where for any $s \in [k]$ we have $$\pi_s = \left(\pi_{s-1}\backslash\{B_\alpha, B_\beta\}\right) \sqcup \{B_\alpha, B_\beta\}$$ where $i_{s-1} + 1 \in B_\alpha$ and $j_{s-1} + 1 \in B_\beta.$ Then we have
$$g(f(d(k)))  =  \{c((i_\ell +1) - 1, (j_\ell +1) - 1), \ell)\}_{\ell \in [k]} = \{(c(i_\ell, j_\ell), \ell)\}_{\ell \in [k]}.$$
\end{proof}

\begin{example}
Here we give examples of the bijection from the previous theorem with $k = 4$.

$$\begin{array}{ccc}
\includegraphics[scale=.7]{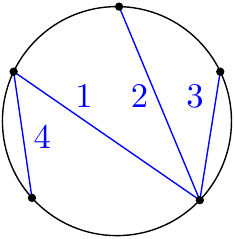} & \raisebox{.3in}{$\longmapsto$} & \raisebox{.1in}{\includegraphics[scale=.7]{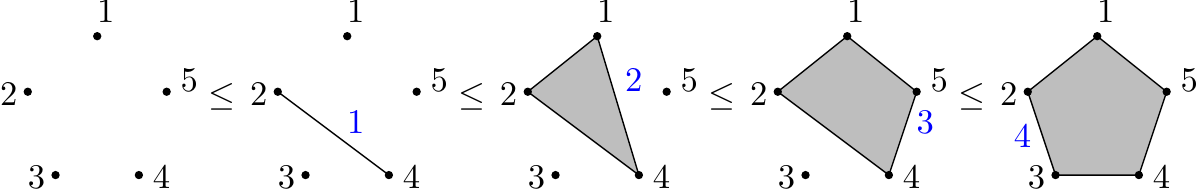}}\\
\includegraphics[scale=.7]{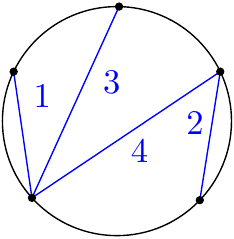} & \raisebox{.3in}{$\longmapsto$} & \raisebox{.1in}{\includegraphics[scale=.7]{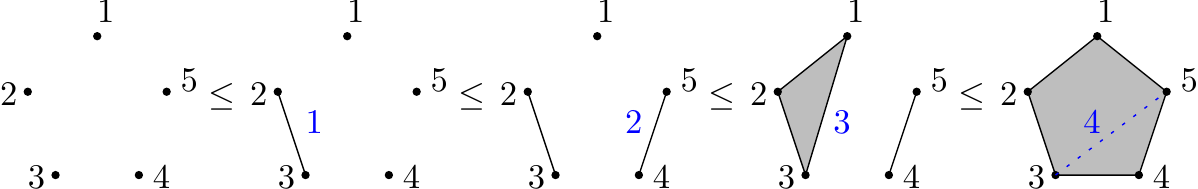}}
\end{array}$$
\end{example}

\section{Future directions}
\label{sec:fut}

In this work, we have only worked out the combinatorics of exceptional sequences associated to the linearly-ordered $\mathbb{A}_n$ Dynkin diagram. However, exceptional sequences are defined for any acyclic quiver. We have made progress in extending our work to the following quiver of type $\DD_n$ and would like to extend our results to all acyclic orientations of simply-laced Dynkin diagrams.
\[\begin{xy} 0;<1pt,0pt>:<0pt,-1pt>:: 
(120,20) *+{} ="0",
(60,20) *+{} ="1",
(90,20) *+{\cdots} ="2",
(30,20) *+{2} ="3",
(0,20) *+{1} ="4",
(160,20) *+{n-2} ="5",
(200,0) *+{n} ="6",
(210,40) *+{n-1} ="7",
"5", {\ar"0"},
"1", {\ar"3"},
"3", {\ar"4"},
"6", {\ar"5"},
"7", {\ar"5"},
\end{xy}\]

We have characterized the posets $\mathcal{P}_d$ that arise from diagrams $d \in \mathcal{D}$ and have shown that the number of linear extensions of $\mathcal{P}_d$ equals the number of complete exceptional sequences that can be formed from the CEC $\overline{\xi}$ corresponding to $d$. In forthcoming work (see \cite{gm2}), we will present a method for enumerating the linear extensions of the posets $\mathcal{P}_d$ and explain the connections between such linear extensions and the theory of factorizations of the long cycle $(1,2,\ldots, n+1) \in \mathfrak{S}_{n+1}$ by a set of transpositions (see \cite{gy02}).

\nocite{*}
\bibliographystyle{alpha}
\bibliography{cmes_a-gm}

\newcommand{\etalchar}[1]{$^{#1}$}
\begin{thebibliography}{ONA{\etalchar{+}}13}

\bibitem[Ara13]{a13}
T.~Araya.
\newblock Exceptional sequences over path algebras of type {$A_n$} and
  non-crossing spanning trees.
\newblock {\em Algebr. Represent. Theory}, 16(1):239--250, 2013.

\bibitem[ASS06]{ass06}
I.~Assem, D.~Simson, and A.~Skowro{{\'n}}ski.
\newblock {\em Elements of the representation theory of associative algebras.
  {V}ol. 1}, volume~65 of {\em London Mathematical Society Student Texts}.
\newblock Cambridge University Press, Cambridge, 2006.
\newblock Techniques of representation theory.

\bibitem[Bes03]{b03}
D.~Bessis.
\newblock The dual braid monoid.
\newblock {\em Ann. Sci. {\'E}cole Norm. Sup. (4)}, 36(5):647--683, 2003.

\bibitem[BV08]{bv08}
A.~B. Buan and D.~F. Vatne.
\newblock Derived equivalence classification for cluster-tilted algebras of
  type {$A_n$}.
\newblock {\em J. Algebra}, 319(7):2723--2738, 2008.

\bibitem[BY14]{by14}
T.~Br\"{u}stle and D.~Yang.
\newblock Ordered exchange graphs.
\newblock {\em arXiv:1302.6045v4}, 2014.

\bibitem[Cha12]{c12}
A.~N. Chavez.
\newblock On the c-vectors of an acyclic cluster algebra.
\newblock {\em arXiv:1203.1415}, 2012.

\bibitem[DWZ10]{dwz10}
H.~Derksen, J.~Weyman, and A.~Zelevinsky.
\newblock Quivers with potentials and their representations {II}: applications
  to cluster algebras.
\newblock {\em J. Amer. Math. Soc.}, 23(3):749--790, 2010.

\bibitem[FZ02]{fz02}
S.~Fomin and A.~Zelevinsky.
\newblock Cluster algebras. {I}. {F}oundations.
\newblock {\em J. Amer. Math. Soc.}, 15(2):497--529 (electronic), 2002.

\bibitem[GG13]{gg13}
E.~Gorsky and M.~Gorsky.
\newblock A braid group action on parking functions.
\newblock {\em arXiv:1112.0381v2}, 2013.

\bibitem[GM]{gm2}
A.~Garver and J.~Matherne.
\newblock Linear extensions and exceptional sequences.
\newblock {\em in preparation}.

\bibitem[GR87]{gr87}
A.~L. Gorodentsev and A.~N. Rudakov.
\newblock Exceptional vector bundles on projective spaces.
\newblock {\em Duke Math. J.}, 54(1):115--130, 1987.

\bibitem[GY02]{gy02}
I.~Goulden and A.~Yong.
\newblock Tree-like properties of cycle factorizations.
\newblock {\em J. Combin. Theory Ser. A}, 98(1):106--117, 2002.

\bibitem[Hil06]{h06}
L.~Hille.
\newblock On the volume of a tilting module.
\newblock {\em Abh. Math. Sem. Univ. Hamburg}, 76:261--277, 2006.

\bibitem[HK13]{hk13}
A.~Hubery and H.~Krause.
\newblock A categorification of non-crossing partitions.
\newblock {\em arXiv:1310.1907}, 2013.

\bibitem[IO13]{io13}
K.~Igusa and J.~Ostroff.
\newblock Mixed cobinary trees.
\newblock {\em arXiv:1307.3587v3}, 2013.

\bibitem[IT09]{it09}
C.~Ingalls and H.~Thomas.
\newblock Noncrossing partitions and representations of quivers.
\newblock {\em Compos. Math.}, 145(6):1533--1562, 2009.

\bibitem[Kel12]{k12}
B.~Keller.
\newblock Cluster algebras and derived categories.
\newblock In {\em Derived categories in algebraic geometry}, EMS Ser. Congr.
  Rep., pages 123--183. Eur. Math. Soc., Z{\"u}rich, 2012.

\bibitem[Kel13]{kel13}
B.~Keller.
\newblock Quiver mutation and combinatorial {DT}-invariants.
\newblock FPSAC 2013 Abstract, 2013.

\bibitem[ONA{\etalchar{+}}13]{onawfr13}
M.~Obaid, K.~Nauman, W.~S.~M. Al{-}Shammakh, W.~Fakieh, and C.~M. Ringel.
\newblock The number of complete exceptional sequences for a {D}ynkin algebra.
\newblock {\em Colloq. Math.}, 133(2):197--210, 2013.

\bibitem[ST13]{st13}
D.~Speyer and H.~Thomas.
\newblock Acyclic cluster algebras revisited.
\newblock In {\em Algebras, quivers and representations}, volume~8 of {\em Abel
  Symp.}, pages 275--298. Springer, Heidelberg, 2013.

\bibitem[SW86]{sw86}
D.~Stanton and D.~White.
\newblock {\em Constructive combinatorics}.
\newblock Undergraduate Texts in Mathematics. Springer-Verlag, New York, 1986.

\end{thebibliography}
\label{sec:biblio}

\end{document}